%

\documentclass[preprint]{imsart}

\usepackage{amsthm,amsmath}
\RequirePackage[colorlinks,citecolor=blue,urlcolor=blue]{hyperref}


\startlocaldefs

\usepackage{amssymb, amscd}
\usepackage{slashed}
\usepackage{graphicx}
\usepackage[title]{appendix}

\usepackage{hyperref}
\usepackage{wrapfig}

\usepackage{dashrule}

\newcommand{\rqm}{{\declareslashed{}{\text{--}}{0.3}{0.4}{d}\slashed{d}}}

\newtheorem{thm}{Theorem}

\newtheorem{rem}{Remark}

\newtheorem{cor}{Corollary}
\newtheorem{ass}{Assumption}

\newtheorem{exam}{Example}


\newtheorem{lem2}{Lemma}[section]
\newtheorem{rem2}{Remark}[section]

\newcommand{\Cov}{\operatorname{Cov}}
\newcommand{\Std}{\operatorname{Std}}
\newcommand{\E}{\mathbb{E}}
\renewcommand{\P}{\mathbb{P}}
\renewcommand{\Re}{\operatorname{Re}}
\newcommand{\U}{\mathbb{U}}

\newcommand{\sign}{\operatorname{sign}}

\newcommand{\supp}{\operatorname{supp}}
\newcommand{\bsc}{\overline{\operatorname{sc}}}
\newcommand{\TV}{\operatorname{TV}}

\newcommand{\graph}{\operatorname{graph}}

\newcommand{\Beta}{\operatorname{Beta}}
\newcommand{\op}{\operatorname{op}}
\newcommand{\Op}{\operatorname{Op}}
\newcommand{\Os}{\operatorname{Os}}

\newcommand{\mT}{\mathcal{T}}

\newcommand{\mfp}{\mathfrak{a}^{(t,h)}}
\newcommand{\om}{\overline{m}}

\endlocaldefs

\begin{document}

\begin{frontmatter}

\title{Multiscale Methods for Shape Constraints in Deconvolution: Confidence Statements for Qualitative Features}
\runtitle{Confidence Statements for Qualitative Features}



\begin{aug}
\author{\fnms{Johannes} \snm{Schmidt-Hieber}\corref{}\ead[label=e1]{j.schmidt-hieber@vu.nl}}
\author{\fnms{Axel} \snm{Munk}\ead[label=e2]{munk@math.uni-goettingen.de}}
\and
\author{\fnms{Lutz} \snm{D\"umbgen}
\ead[label=e3]{lutz.duembgen@stat.unibe.ch}}


\runauthor{J. Schmidt-Hieber et al.}

\affiliation{Vrije Universiteit Amsterdam\thanksmark{m1}, Universit\"at G\"ottingen\thanksmark{m2} \\ and Universit\"at Bern\thanksmark{m3}}

\address{Department of Mathematics\\ Vrije Universiteit Amsterdam\\ De Boelelaan 1081a,\\ 1081 HV Amsterdam\\ Netherlands\\
\printead{e1}}

\address{Institut f\"ur Mathematische Stochastik \\ Universit\"at G\"ottingen \\ Goldschmidtstr. 7 \\ D-37077 G\"ottingen \\ and \\ Max-Planck Institute \\ for Biophysical Chemistry\\ Am Fassberg 11\\ D-37077 G\"ottingen\\ Germany \\
\printead{e2}}

\address{Institut f\"ur mathematische Stochastik \\ und Versicherungslehre\\ Universit\"at Bern\\ Alpeneggstrasse 22\\ CH-3012 Bern\\ Switzerland\\
\printead{e3}}
\end{aug}

\begin{abstract}
We derive multiscale statistics for deconvolution in order to detect qualitative features of the unknown density. An important example covered within this framework is to test for local monotonicity on all scales simultaneously. We investigate the moderately ill-posed setting, where the Fourier transform of the error density in the deconvolution model is of polynomial decay. For multiscale testing, we consider a  calibration, motivated by the modulus of continuity of Brownian motion. We investigate the performance of our results from both the theoretical and simulation based point of view. A major consequence of our work is that the detection of qualitative features of a density in  a deconvolution problem is a doable task although the minimax rates for pointwise estimation are very slow.
\end{abstract}

\begin{keyword}[class=AMS]
\kwd[Primary ]{62G10}
\kwd[; secondary ]{62G15}
\kwd{62G20}
\end{keyword}

\begin{keyword}
\kwd{Brownian motion}
\kwd{convexity}
\kwd{pseudo-differential operators}
\kwd{ill-posed problems}
\kwd{mode detection}
\kwd{monotonicity}
\kwd{multiscale statistics}
\kwd{shape constraints}
\end{keyword}

\end{frontmatter}

\section{Introduction} 
We observe $Y=\left(Y_{1},\ldots,Y_{n}\right)$ according to the deconvolution model
\begin{align}
 	Y_i=X_i+\epsilon_i, \quad i=1,\ldots,n,
	\label{eq.deconmod}
\end{align}
where $X_i,  \epsilon_i, \ i=1,\ldots,n$ are assumed to be real valued and independent, $X_i\stackrel{i.i.d.}{\sim} X, \epsilon_i\stackrel{i.i.d.}{\sim} \epsilon$ and $Y_1, X, \epsilon$ have densities $g, f$ and $f_\epsilon$, respectively. Our goal is to develop multiscale test statistics for certain structural properties of $f$, where the density $f_\epsilon$ of the blurring distribution is assumed to be known.

Although estimation in deconvolution models has attracted a lot of attention during the last decades (cf. Fan \cite{Fan91}, Diggle and Hall \cite{dig}, Pensky and Vidakovic \cite{pen}, Johnstone et {\it al.} \cite{joh2004}, Butucea and Tsybakov \cite{BT07a} as well as Meister \cite{mei2} for some selective references), inference about $f$ and its qualitative features is rather less well studied. In fact, adaptive confidence bands would be desirable but turn out to be very ambitious. First, they suffer from the bad convergence rates induced by the ill-posedness of the problem  (cf. Bissantz et {\it al.} \cite{bis}), making confidence bands less attractive for applications. Second, one would need to circumvent the classical problems of honest adaptation over H\"older scales. To overcome these difficulties the aim of the paper is to derive simultaneous confidence statements for qualitative features of $f.$

Structural properties or shape constraints will be conveniently expressed as (pseudo)-differential inequalities of the density $f$, assuming for the moment that $f$ is sufficiently smooth. Important examples are $f' \gtrless 0$ to check local monotonicity properties as well as $f'' \gtrless 0$ for local convexity or concavity. To give another example, suppose that we are interested in local monotonicity properties of the density $\tilde{f}$ of $\exp(aX)$ for given $a > 0$. Since $\tilde{f}(s) = (as)^{-1} f(a^{-1}\log(s))$, one can easily verify that local monotonicity properties of $\tilde{f}$ may be expressed in terms of the inequalities $f' - a f \lessgtr 0$.

This paper deals with the moderately ill-posed case, meaning that the Fourier transform of the blurring density $f_\epsilon$ decays at polynomial rate. In fact, we work under the well-known assumption of Fan \cite{Fan91} (cf. Assumption \ref{ass.noise}), which essentially assures that the inversion operator, mapping $g\mapsto f$, is pseudo-differential. This combines nicely with the assumption on the class of shape constraints. Our framework includes many important error distributions such as Exponential, $\chi^2$, Laplace and Gamma distributed random variables. The special case $\epsilon=0$ (i.e.\ no deconvolution or direct problem) can be treated as well, of course. 

\subsection{Example: Detecting trends in deconvolution}
\label{subs.example_Laplace_deconv}

To illustrate the key ideas, suppose that we are interested in detection of regions of increase and decrease of the true density in Laplace deconvolution, that is, the error density is given by $f_\epsilon = (2\theta)^{-1}\exp(-|\cdot|/\theta)$. Let $\phi$ be a sufficiently smooth, non-negative kernel function (i.e. $\int \phi(u)du=1$), supported on $[0,1]$. Then, since $f=g-\theta^2 g''$ in this case, it follows by partial integration that
\begin{align}
  T_{t,h}:= \frac 1{h\sqrt{n}}\sum_{k=1}^n \left(\frac{\theta^2}{h^2}\phi^{(3)}\left(\frac{Y_k-t}h\right)-\phi'\left(\frac{Y_k-t}h\right)\right).
  \label{eq.def_Tth_Lapl_mono}
\end{align}
has expectation $\E T_{t,h} = \sqrt{n}\int_t^{t+h} \phi(\tfrac{s-t}h\big)f'(s) ds$. The construction of the multiscale test relies on the following analytic observation. Suppose that for a given pair $(t,h)$ there is a number $d_{t,h}$ such that
\begin{align}
    |T_{t,h}-\E T_{t,h}|\leq d_{t,h}.
    \label{eq.fnh_hat_control}
\end{align}
If in addition $T_{t,h} > d_{t,h}$, then necessarily 
  \begin{align}
    \E T_{t,h}= \sqrt{n} \int_t^{t+h} \phi\big(\tfrac{s-t}h\big) f'(s) ds>0
    \label{eq.phi_mon_ineq}
  \end{align}
and by the non-negativity of $\phi$, $f(s_1) < f(s_2)$ for some points $s_1 < s_2$ in $[t,t+h]$. On the contrary, $T_{t,h}<- d_{t,h}$ implies that there is a decrease on $[t,t+h]$. For a sequence $N_n =o(n/\log^3 n)$ tending to infinity faster than $\log^3 n$ and $u_n=1/\log\log n$, define
\begin{align*}
  B_n:=\Big\{\Big(\frac k{N_n}, \frac l{N_n} \Big) \ \big | \ k=0,1,\ldots, \ l=1,2,\ldots,[N_nu_n], \  k+l\leq N_n\Big\}. 
\end{align*}
Given $\alpha \in (0,1)$, we will be able to compute bounds $d_{t,h}$ such that for all $(t,h)\in B_n$, inequality \eqref{eq.fnh_hat_control} holds simultaneously with asymptotic probability $1-\alpha$. Taking into account that \eqref{eq.fnh_hat_control} implies \eqref{eq.phi_mon_ineq}, this allows to identify regions of increase and decrease for prescribed probability.


\begin{figure}[h]
\begin{center}
  \includegraphics[scale=0.65]{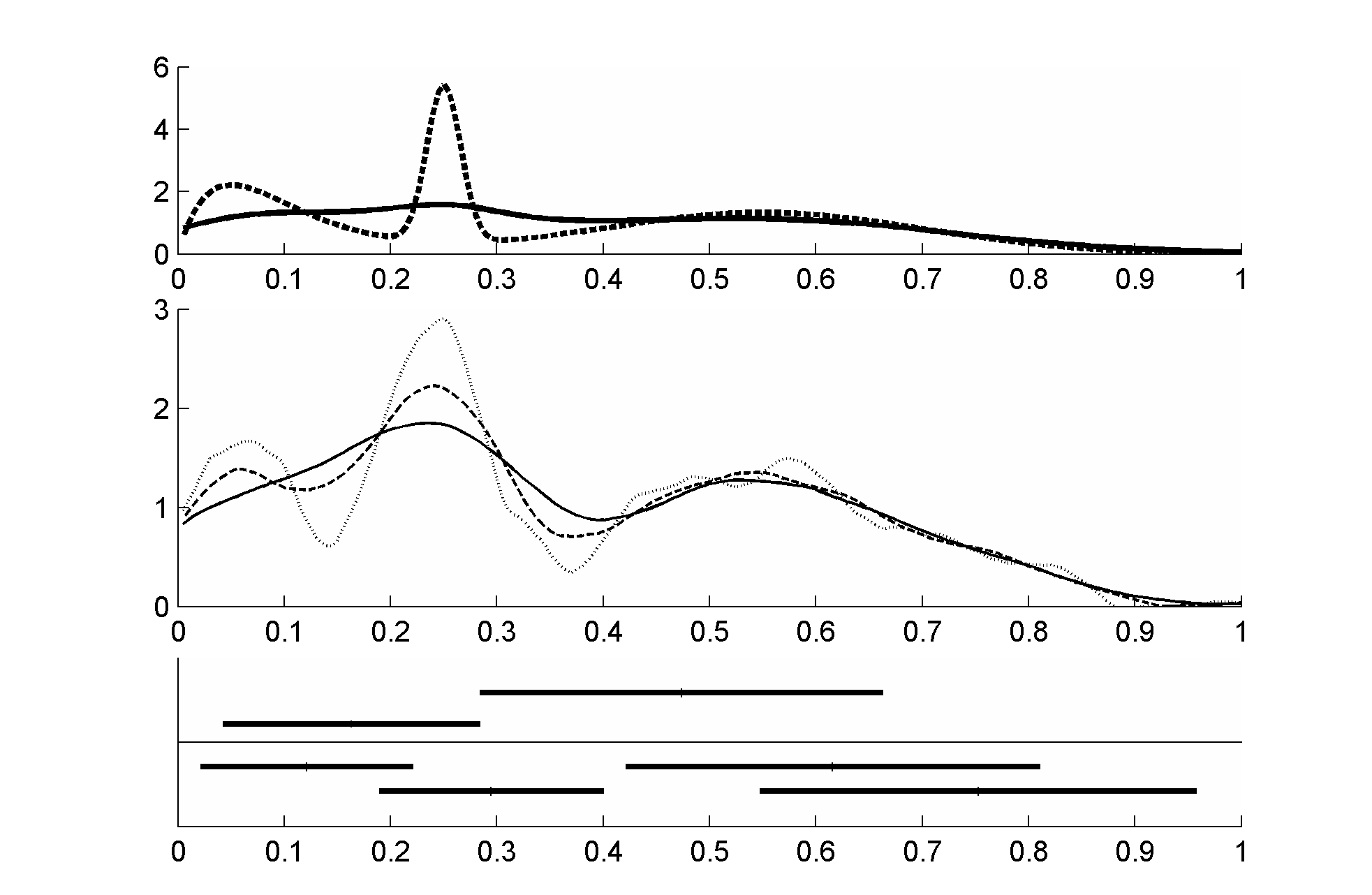}
\caption{Simulation for sample size $n=2000$ and $90\%$-quantile. {\it Upper display:} True density $f$ (dashed) and convoluted density $g$ (solid). {\it Middle display:} Kernel density estimates for $f$ based on the bandwidths $h=0.22$ (' $\hdashrule[0.5ex]{0.4cm}{0.5pt}{1pt}$'), $h=0.31$ (' $\hdashrule[0.5ex]{0.6cm}{0.5pt}{3pt}$'), and $h=0.40$ (' $\hdashrule[0.5ex]{0.4cm}{0.5pt}{}$'). {\it Lower display:} Confidence statements. Thick horizontal lines are intervals with monotone increase/decrease (above/below the thin line).}
\label{fig.intro}
\end{center}
\end{figure} 

Figure \ref{fig.intro} shows a simulation result for $n=2000, N_n=\lfloor n^{3/5}\rfloor, \theta=0.075$ and confidence level $90\%$. The upper panel of Figure \ref{fig.intro} displays the true density of $f$ as well as the convoluted density $g$. Notice that we only have observations with density $g$. In fact, by visual inspection of $g$ it becomes apparent how difficult it is to find segments on which $f$ is monotone increasing/decreasing.

The lower panel of Figure \ref{fig.intro} displays intervals for which we can conclude that there is a monotone increase/decrease. Let us give precise instructions on how to read this plot: Pick any of the thick horizontal lines. Then, with overall probability $90\%$, somewhere in this interval there is a monotone increase or decrease of $f$, depending on whether it is drawn above or below the thin line, respectively. In particular, the fact that intervals with monotone increase and decrease overlap does not yield a contradiction, since the statement is that the monotonicity holds only on a non-empty subset of the corresponding interval. (The way the intervals are piled up in the plot, besides the fact that they are above or below the thin line, is arbitrary and does not contain information.) Recall that we have uniformity in the sense that with confidence $90\%$ all these statements are true simultaneously (cf. also D\"umbgen and Walther \cite{due1}).

To illustrate our approach consider the middle panel in Figure \ref{fig.intro}. Here, we have displayed three reconstructions using $t\mapsto T_{t,h}/(h\sqrt{n})$ as kernel density estimator with the same unimodal kernel as for the test statistic and three different bandwidths $h\in\{0.22,0.31,0.40\}$. Not surprisingly (cf.\ Delaigle and Gijbels \cite{DG04}), the reconstructions yield very different answers what the shape of $f$ could be. For instance, focus on the left hand side of the graph. For $h=0.22$ and $h=0.31$, the density estimators have a mode at around $0.06$, which is completely smoothed out under the larger bandwidth $h=0.4$. As a practitioner, not knowing the truth, we might want to screen for modes by browsing through the plots for varying bandwidths and ask ourselves whether there is another mode or not. With the confidence statement in the lower display, we see that the true density $f$ has to have a monotone decrease on $[0.02, 0.22]$ with confidence  $90\%$ (this is exactly the meaning of the leftmost horizontal line). This rules out the reconstruction without a mode at $0.06$, since it is monotone increasing on the whole interval $[0, 0.25]$ and thus does not reflect the right shape behavior. The kernel density estimator corresponding to the smallest bandwidth $h=0.22$ (although it is the best estimator in a pointwise sense) suggests that there could be another mode at around $0.58$. However, since the confidence intervals do not support such a hypothesis, this could be merely an artefact. Combining the confidence statements in Figure \ref{fig.intro}, we conclude that with $90\%$ confidence the true density has a local minimum and a local maximum on $[0,1]$. Repetition of the simulation shows that often two, three or four segments of increases and decreases are detected, and at most one mode on $[0,1]$ is found (in $69\%$ of the cases). Therefore, sample size $n=2000$ is not large enough to detect systematically the correct number of minima and maxima ($2$ and $3$). Numerical simulations for larger sample size and more details are given in Section \ref{sec.numsim}. 

The derived confidence statements should be viewed as an additional tool for analyzing data, in particular for substantiating vague conclusions or visual impressions from point estimators.

\subsection{Pseudo-differential operators and multiscale analysis}

As mentioned at the beginning of the introduction, we interpret shape constraints as pseudo-differential inequalities. Let $\mathcal{F}(f)=\int_{\mathbb{R}}\exp\left(-ix \cdot\right) f(x) dx$ always denote the Fourier transform of $f\in L^1\left(\mathbb{R}\right)$ or $f\in L^2\left(\mathbb{R}\right)$ (depending on the context). Consider a general class of differential operators $\op(p)$ with symbol $p$ which can be written for nice $f$ as 
\begin{align}
  (\op(p)f)(x)=\frac 1{2\pi}\int e^{ix\xi}p(x,\xi) \mathcal{F}(f)(\xi) d\xi.
  \label{eq.defopp}
\end{align}
This class will be an enlargement of (elliptic) pseudo-differential operators by fractional differentiation. Given data from model \eqref{eq.deconmod} the goal is then to identify intervals at a controlled error level on which $\Re( \op(p)f) \not\le 0$ or $\Re(\op(p)f) \not\ge 0$. Here $\Re$ denotes the projection on the real part. In Subsection \ref{subs.example_Laplace_deconv} we studied implicitly already the case of $\op(p)$ being the differentiation operator $Df=f'$ (monotonicity). If applied to $\op(p)=D^2$ (i.e. $p(x,\xi)=-\xi^2$), our method yields bounds for the number and confidence regions for the location of inflection points of $f$. We also discuss an example related to Wicksell's problem with shape constraint described by fractional differentiation.

The statistic introduced in this paper investigates shape constraints of the unknown density $f$ on all scales simultaneously. Generalizing \eqref{eq.phi_mon_ineq}, we need to derive simultaneous confidence intervals for $\langle \phi\circ S_{t,h} , \Re (\op(p)f)\rangle$ with the scale-and-location shift $S_{t,h}=(\cdot-t)/h$ and the inner product $\langle h_1, h_2 \rangle := \int_{\mathbb{R}} h_1(x) \overline{h_2}(x) \, dx$ in $L^2$. If $\op(p)^\star$ is the adjoint of $\op(p)$ (in a certain space) with respect to $\langle \cdot,\cdot \rangle$, then
\begin{align}
  \sqrt{n} \ \big\langle \phi \circ S_{t,h}, \Re \op(p)f\big\rangle
  &=\sqrt{n} \ \Re \int \big(\op(p)^\star(\phi\circ S_{t,h})\big)(x)f(x) dx \notag \\ 
  &= \frac {\sqrt n}{2\pi} \ \Re\int \mathcal{F}\big(\op(p)^\star(\phi\circ S_{t,h})\big)(s)
  \overline{\mathcal{F}(f)(s)} ds
  \label{eq.Etthexp}
\end{align}
and the r.h.s. can be estimated unbiasedly by the test statistic $$T_{t,h}:= n^{-1/2}\sum_{k=1}^n \Re \ v_{t,h}(Y_k)$$ with
\begin{align*}
  v_{t,h}(u):=\frac 1{2\pi}\int \mathcal{F}\big(\op(p)^\star(\phi\circ S_{t,h})\big)(s)\frac{e^{isu}}
  {\mathcal{F}(f_\epsilon)(-s)}ds.
\end{align*}
This gives rise to a multiscale statistic
\begin{align*}
 T_n=
 \sup_{(t,h)} w_h\left(\frac{\big|T_{t,h}-\E \, T_{t,h}\big|}{\widehat {\Std(T_{t,h})}}-\widetilde w_h\right),
\end{align*}
where $w_h$ and $\widetilde{w}_h$ are chosen in order to calibrate the different scales with equal weight, while $\widehat{\Std(T_{t,h})}$ is an estimator of the standard deviation of $T_{t,h}$.

The key result in this paper is the approximation of $T_n$ by a distribution-free statistic from which critical values can be inferred. Given the critical values, we can in a second step compute bounds $d_{t,h}$ such that a statement of type \eqref{eq.fnh_hat_control} holds. Following the same ideas as in Subsection \ref{subs.example_Laplace_deconv}, this is enough to identify intervals on which $\Re(\op(p)f) \not\le 0$ or $\Re(\op(p)f) \not\ge 0$. In fact the multiscale method implies confidence statements which are stronger than the ones described up to now. These objects can be related to superpositions of confidence bands. For more precise statements see Section \ref{sec.conf_statements}.

\subsection{Comparison with related work and applications}

Hypothesis testing for deconvolution and related inverse problems is a relatively new area. Current methods cover testing of parametric assumptions (cf.\ \cite{BIS09, lau, bishol}) and, more recently, testing for certain smoothness classes such as Sobolev balls in a Gaussian sequence model (Laurent {\it et al.}\ \cite{lau2,lau} and Ingster {\it et al.}\ \cite{ing}). All these papers focus on regression deconvolution models. Exceptions for density deconvolution are Holzmann {\it et al.}\ \cite{hol}, Balabdaoui {\it et al.} \cite{bal}, and Meister \cite{mei} who developed tests for various global hypotheses, such as global monotonicity. The latter test has been derived for one fixed interval and allows to check whether a density is monotone on that interval at a preassigned level of significance. 


Our work can also be viewed as an extension of Chaudhuri and Marron \cite{cha} as well as D\"umbgen and Walther \cite{due1} who treated the case $\op(p) = D^m$ (with $m=1$ in \cite{due1}) in the direct case, i.e.\ when $\epsilon=0$. However, the approach in \cite{cha} does not allow for sequences of bandwidths tending to zero and yields limit distributions depending on unknown quantities again. The methods in \cite{due1} require a deterministic coupling result. The latter allows to consider the multiscale approximation for $f=\mathbb{I}_{[0,1]}$ only, but it cannot be transferred to the deconvolution setting.

One of the main advantages of multiscale methods, making it attractive for applications, is that essentially no smoothing parameter is required. The main choice will be the quantile of the multiscale statistic, which has a clear probabilistic interpretation. Furthermore, our multiscale statistic allows to construct estimators for the number of modes and inflection points which have a number of nice properties: First, modes and inflection points are detected with the minimax rate of convergence (up to a log-factor). Second, the probability that the true number is overestimated can be made small, since it is completely controlled by the quantile of the multiscale statistic. To state it differently, it is highly unlikely that artefacts are detected, which is a desirable property in many applications. It is worth noting that neither assumptions are made on the number of modes nor additional model selection penalties are necessary.

For practical applications, we may use these models if for instance the error variable $\epsilon$ is an independent waiting time. For example let $X_i$ be the (unknown) time of infection of the $i$-th patient, $\epsilon_i$ the corresponding incubation time, and $Y_i$ is the time when diagnosis is made. Then, it is convenient to assume $\epsilon\sim\Gamma\left(r,\theta\right)$ (see for instance \cite{ang}, Section 3.5). By the techniques developed in this paper one will be able to identify for example time intervals where the number of infections increased and decreased for a specified confidence level. Another application is single photon emission computed tomography (SPECT), where the detected scattered photons are blurred by Laplace distributed random variables (cf.\ Floyd {\it et al.}\ \cite{flo}, Kacperski {\it et al.}\ \cite{kac}).

The paper is organized as follows. In Section \ref{sec.genmsc} we show how distribution-free approximations of multiscale statistics can be derived for general empirical processes under relatively weak conditions. For the precise statement see Theorem \ref{thm.gmsc}. These results are transferred to shape constraints and deconvolution models in Section \ref{sec.mono}. In Section \ref{sec.conf_statements} we discuss the statistical consequences and show how confidence statements can be derived. Theoretical questions related to the performance of the multiscale method and numerical aspects are discussed in Sections \ref{sec.performance} and \ref{sec.numsim}. Proofs and further technicalities are shifted to the appendix.

\medskip

{\it Notation:} We write $\mT$ for the set $[0,1]\times (0,1]$. The expression $\lfloor x\rfloor$ means the largest integer not exceeding $x$. The support of a function $\phi$ is $\supp \phi$, $\|\cdot\|_p$ denotes the norm in $L^p:=L^p(\mathbb{R})$, and $\TV(\cdot)$ stands for the total variation of functions on $\mathbb{R}$. As customary in the theory of Sobolev spaces, put $\langle s\rangle:=(1+|s|^2)^{1/2}$. One should not confuse this with $\langle \cdot, \cdot \rangle$, the $L^2$-inner product. If it is clear from the context, we write $x^k\phi$ and $\langle x\rangle^k\phi$ for the functions $x\mapsto x^k\phi(x)$ and $x\mapsto \langle x\rangle^k\phi(x)$, respectively. The ($L^2$-)Sobolev space $H^r$ is defined as the class of functions with norm
\begin{align*}
  \|\phi\|_{H^r}:=\Big(\int \langle s\rangle^{2r}|\mathcal{F}(\phi)(s)|^2 ds\Big)^{1/2}< \infty.
\end{align*}
For any $q$ and $\ell \in \mathbb{N}$ ($\mathbb{N}$ is always the set of non-negative integers) define $H^q_{\ell}$ as the Sobolev type space
\begin{align*}
  H^q_{\ell}:=\big\{\psi \ | \ x^k\psi \in H^q, \ \text{for}\ k=0,1,\ldots,\ell \ \big\}
\end{align*}
with norm $\|\psi\|_{H^q_\ell}:=\sum_{k=0}^{\ell}\|x^k\psi\|_{H^q}$.

\section{A general multiscale test statistic}
\label{sec.genmsc}

In this section, we shall give a fairly general convergence result which is of interest on its own. The presented result does not use the deconvolution structure of model \eqref{eq.deconmod}. It only requires that we have observations $Y_i=G^{-1}(U_i), \ i=1,\ldots,n$ with $U_i$ i.i.d. uniform on $[0,1]$ and $G$ an unknown distribution function with Lebesgue density $g$ in the class
\begin{align}
\mathcal{G}:=  \mathcal{G}_{c,C,q}:= \big\{ G\  \big | &\ G \ 
\text{is a distribution function with density} \ g, \notag \\  
  &\ \ c\leq g \big |_{[0,1]}, \ \|g\|_\infty\leq c^{-1}, \ \text{and} \ g\in \mathcal{J}(C,q) \ \big\}
  \label{eq.GcCdef}
\end{align}
for fixed $c,C\geq 0$, $0\leq q<1/2$, and the Lipschitz type constraint
\begin{align*}
  \mathcal{J}&:=\mathcal{J}(C,q) \\&:=\big \{h \ \big| \ |\sqrt{h(x)}-\sqrt{h(y)}|\leq C (1+|x|+|y|)^q|x-y|, \ \text{for all} \ x,y\in \mathbb{R} \big \}.
\end{align*}

For a set of real-valued functions $(\psi_{t,h})_{t,h}$ define the test statistic (empirical process) $T_{t,h}=n^{-1/2} \sum_{k=1}^n \psi_{t,h}(Y_k)$. If $h$ is small and $\psi_{t,h}$ localized around $t$, then $\Std(T_{t,h})\approx (\int \psi_{t,h}^2(s)g(s)ds)^{1/2}\approx \|\psi_{t,h}\|_2 \sqrt{g(t)}$. It will turn out later on that one should allow for a slightly regularized standardization and therefore we consider 
\begin{align*}
  \frac{|T_{t,h}-\E[T_{t,h}]|}{V_{t,h} \ \sqrt{\widehat g_n(t)}}
\end{align*}
with $V_{t,h}\geq \|\psi_{t,h}\|_2$ and $\widehat g_n$ an estimator of $g$, satisfying
\begin{align}
  \sup_{G\in \mathcal{G}}\|\widehat g_n-g\|_\infty= O_P(1/\log n).
  \label{eq.widehatgdef}
\end{align} 
Unless stated otherwise, asymptotic statements refer to $n \to \infty$. We combine the single test statistics for an arbitrary subset
\begin{align}
 B_n \subset \big\{(t,h) \big | \ t\in [0,1], \ h\in[l_n,u_n] \big\}
  \label{eq.Bndef}
\end{align}
and consider for $\nu >e$ and
\begin{align}
  w_h= \frac{ \sqrt{\tfrac 12 \log \tfrac \nu h}}{\log\log \tfrac \nu h},
  \label{eq.whdef_first}
\end{align}
distribution-free approximations of the multiscale statistic
\begin{align}
  T_n:=\sup_{(t,h)\in B_n} w_h\left(\frac{\big|T_{t,h}-\E [T_{t,h}]\big|}{V_{t,h} \ \sqrt{\widehat g_n(t)}}-\sqrt{2\log \tfrac \nu h}\right).
  \label{eq.defgenmsc}
\end{align}

\begin{ass}[Assumption on test functions]
\label{as.testfcts}
Given a set $B_n$ of the form \eqref{eq.Bndef}, functions $(\psi_{t,h})_{(t,h)\in \mT}$, and numbers $(V_{t,h})_{(t,h)\in \mT}$, suppose that the following assumptions hold.
\begin{itemize}
 \item[(i)] For all $(t,h)\in\mT$, $\|\psi_{t,h}\|_2\leq V_{t,h}$.
 \item[(ii)] We have uniform bounds on the norms
   \begin{align*}
   \sup_{(t,h)\in \mT} \frac{\sqrt h \TV(\psi_{t,h}) + \sqrt h \|\psi_{t,h}\|_\infty+h^{-1/2}\|\psi_{t,h}\|_1}{V_{t,h}}\lesssim 1.
    \end{align*}
 \item[(iii)] There exists $\alpha>1/2$ such that
  \begin{align*}
  \kappa_n:=\sup_{(t,h)\in B_n, \ G\in \mathcal{G}} w_h \frac{\TV\Big(\psi_{t,h}(\cdot) \big[\sqrt{g(\cdot)}-\sqrt{g(t)}\big]\langle \cdot \rangle^{\alpha}\Big)}{ V_{t,h}} \rightarrow 0.
  \end{align*}
  \item[(iv)] 
  There exists a constant $K$ such that for all $(t,h), (t',h')\in \mT$,
  \begin{align*}
  \frac{\sqrt{h}\wedge \sqrt{h'}}{V_{t,h} \vee V_{t',h'}}
  \Big[
  \|\psi_{t,h}-\psi_{t',h'}\|_2+|V_{t,h}-V_{t',h'}|\Big]
  \leq K\sqrt{|t-t'|+|h-h'|}.
  \end{align*}
\end{itemize}
\end{ass}

\begin{thm}
\label{thm.gmsc}
Given a multiscale statistic of the form \eqref{eq.defgenmsc}. Work in model \eqref{eq.deconmod} under Assumption \ref{as.testfcts} and suppose that $l_n n \log^{-3}n \rightarrow \infty$ and $u_n=o(1)$. If the process $(t,h)\mapsto\sqrt{h}V_{t,h}^{-1}\int \psi_{t,h}(s)dW_s$ has continuous sample paths on $\mT$, then there exists a (two-sided) standard Brownian motion $W$, such that for $\nu>e$,
\begin{align}  
   \sup_{G\in \mathcal{G}_{c,C,q}} \Big |T_n-\sup_{(t,h)\in B_n} w_h \left(
  \frac{\big|\int \psi_{t,h}(s) dW_s \big|}{V_{t,h}}-\sqrt{2\log \tfrac \nu h }\right) \Big | = O_P(r_n),
  \label{eq.approx_by_distr_free_thm1}
\end{align}
with
\begin{align*}
  r_n=\sup_{G\in \mathcal{G}}\big\|\widehat g_n- g\big\|_\infty \frac{\log n}{\log\log n}+l_n^{-1/2}n^{-1/2}\frac{\log^{3/2} n}{\log\log n}
  +  \frac{\sqrt{u_n \log(1/u_n)}}{\log\log(1/u_n)}+\kappa_n .
\end{align*}
Moreover, 
\begin{align}  
  \sup_{(t,h)\in\mT}
  w_h \left(
  \frac{\big|\int \psi_{t,h}(s) dW_s \big|}{V_{t,h}}-\sqrt{2\log \tfrac \nu h }\right) \  <\infty, \quad \text{a.s.}
  \label{eq.limitbd1}
\end{align}
Hence, the approximating statistic in \eqref{eq.approx_by_distr_free_thm1} is almost surely bounded from above.
\end{thm}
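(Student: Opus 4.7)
The plan is to approximate the centered empirical process by a Gaussian stochastic integral via a Hungarian (KMT) coupling, then to peel off the standardizations one at a time, tracking how each is inflated by the calibration factor $w_h$; the a.s.\ boundedness in \eqref{eq.limitbd1} will then follow from a standard chaining bound for calibrated Gaussian multiscale statistics. Writing $Y_i = G^{-1}(U_i)$ for $U_i$ i.i.d.\ uniform on $[0,1]$, I appeal to Komlós--Major--Tusnády to obtain, on an enlarged probability space, a Brownian bridge $\mathbb{B}$ with
\[
\sup_{u\in[0,1]}\bigl|\sqrt{n}(\widehat F_n^{\,U}(u)-u)-\mathbb{B}(u)\bigr| = O_P(\log n /\sqrt{n}).
\]
Using the decomposition $\mathbb{B}(u)=W(u)-uW(1)$ together with the time change $u=G(y)$, integration by parts produces
\[
T_{t,h}-\E T_{t,h} = \int \psi_{t,h}(s)\sqrt{g(s)}\,dW_s + R^{(1)}_{t,h},
\]
with $|R^{(1)}_{t,h}|\lesssim \TV(\psi_{t,h})\log n/\sqrt{n}$; the bridge-drift term $uW(1)$ contributes only through $\int \psi_{t,h}g$ and is absorbed by the centering up to a residual controlled by $\|\psi_{t,h}\|_1$. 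Dividing by $V_{t,h}$ and invoking Assumption \ref{as.testfcts}(ii) yields a bound of order $h^{-1/2}\log n/\sqrt{n}$; multiplying by the worst-case $w_h\lesssim\sqrt{\log n}/\log\log n$ and taking $h=l_n$ produces the $l_n^{-1/2}n^{-1/2}\log^{3/2}n/\log\log n$ contribution to $r_n$.

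Next I replace the integrand's $\sqrt{g(s)}$ by $\sqrt{g(t)}$: the difference is Gaussian with variance $\|\psi_{t,h}(\sqrt g - \sqrt{g(t)})\|_2^2$, and after writing the integrand as $[\psi_{t,h}(\sqrt g - \sqrt{g(t)})\langle\cdot\rangle^{\alpha}]\cdot\langle\cdot\rangle^{-\alpha}$, integration by parts combined with Assumption \ref{as.testfcts}(iii) and the fact that $\|\langle\cdot\rangle^{-\alpha}\|_2<\infty$ for $\alpha>1/2$ bounds this $L^2$ norm by a constant times $\kappa_n V_{t,h}/w_h$, which contributes $\kappa_n$ to $r_n$ after the final normalization. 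Substituting $\sqrt{\widehat g_n(t)}$ for $\sqrt{g(t)}$ in the denominator uses the Lipschitz continuity of $x\mapsto x^{-1/2}$ on $[c,c^{-1}]$ together with \eqref{eq.widehatgdef}; since the centered, standardized statistic has magnitude $O(w_{l_n}\sqrt{\log(\nu/l_n)})=O(\log n/\log\log n)$, this substitution contributes $\|\widehat g_n - g\|_\infty\,\log n/\log\log n$. Finally, discretizing $B_n$ via the Lipschitz bound in Assumption \ref{as.testfcts}(iv) and invoking the Gaussian modulus of continuity at the upper end $h\sim u_n$ accounts for the remaining $\sqrt{u_n\log(1/u_n)}/\log\log(1/u_n)$ term in $r_n$.

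For \eqref{eq.limitbd1}, the Gaussian process $X_{t,h}:=V_{t,h}^{-1}\int \psi_{t,h}\,dW_s$ has $\E X_{t,h}^2\leq 1$ by Assumption \ref{as.testfcts}(i) and Lipschitz increments by Assumption \ref{as.testfcts}(iv); the calibration $\sqrt{2\log(\nu/h)}+z/w_h$ is precisely the threshold at which $\P(|X_{t,h}|>\sqrt{2\log(\nu/h)}+z/w_h)$ becomes summable over a grid of mesh $\sim h$, and a Dudley chaining argument in the spirit of D\"umbgen and Spokoiny then yields finiteness of the supremum almost surely. The principal obstacle will be the uniformity of the KMT step across the full scale range $[l_n,u_n]$: the sup-norm approximation rate $\log n/\sqrt{n}$ is doubly inflated, first by $h^{-1/2}$ through the $V_{t,h}$ normalization and then by $w_h$ through the calibration, so that the hypothesis $l_n n \log^{-3} n \to \infty$ is precisely what forces this cumulative error to vanish.
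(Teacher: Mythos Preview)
Your broad architecture---KMT coupling, then successive replacement of $\sqrt{g(s)}$ by $\sqrt{g(t)}$ and of $g$ by $\widehat g_n$, with D\"umbgen--Spokoiny chaining for \eqref{eq.limitbd1}---matches the paper. But the bookkeeping behind the third term of $r_n$ is wrong, and this matters. The bridge drift $-uW(1)$ is \emph{not} ``absorbed by the centering'': after the time change it contributes the random term $W(1)\int\psi_{t,h}(y)g(y)\,dy$, which has nothing to do with $\E T_{t,h}$. You correctly note this residual is controlled by $\|\psi_{t,h}\|_1$, but you never carry it through. Doing so, Assumption~\ref{as.testfcts}(ii) gives $\|\psi_{t,h}\|_1/V_{t,h}\lesssim h^{1/2}$, and since $h\mapsto w_h h^{1/2}$ is increasing on $(0,1]$ the supremum over $B_n$ is $w_{u_n}u_n^{1/2}\sim \sqrt{u_n\log(1/u_n)}/\log\log(1/u_n)$. \emph{That} is the source of the $u_n$ term in $r_n$. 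There is no discretization step in the proof, and no ``Gaussian modulus of continuity at $h\sim u_n$'' is needed or used; Assumption~\ref{as.testfcts}(iv) enters only in the D\"umbgen--Spokoiny bound for \eqref{eq.limitbd1}.

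A second, smaller gap: for the $\kappa_n$ step you bound what you call ``this $L^2$ norm,'' i.e.\ the standard deviation of the difference $\int\psi_{t,h}(\sqrt g-\sqrt{g(t)})\,dW$ for each fixed $(t,h)$. A pointwise variance bound of order $(\kappa_n V_{t,h}/w_h)^2$ does not by itself control the supremum over $B_n$; you would still owe a $\sqrt{\log}$ factor. The paper avoids this by a \emph{pathwise} argument: write $f=[f\langle\cdot\rangle^\alpha]\langle\cdot\rangle^{-\alpha}$, time-change $W$ by the distribution function of $\langle\cdot\rangle^{-2\alpha}$, and integrate by parts against the time-changed Brownian motion $\overline W$ to get $\sup_f|\int f\,dW|\le C_\alpha\sup_{[0,1]}|\overline W|\cdot\sup_f\TV(\langle\cdot\rangle^\alpha f)$. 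This is exactly why Assumption~\ref{as.testfcts}(iii) is stated as a total-variation bound rather than an $L^2$ bound. Finally, note the logical order: the $\widehat g_n\to g$ step needs $\sup_{(t,h)}w_h|T_{t,h}-\E T_{t,h}|/(V_{t,h}\sqrt{g(t)})=O_P(\log n/\log\log n)$, which you only know after establishing that the approximating Gaussian statistic is $O_P(1)$---so \eqref{eq.limitbd1} must be proved first and fed back.
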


The proof of the coupling in this theorem (cf.\ Appendix \ref{eq.secproofofmsc}) is based on generalizing techniques developed by Gin\'e {\it et al.}\ \cite{gin2}, while finiteness of the approximating test statistic utilizes results of D\"umbgen and Spokoiny \cite{due2}. Note that Theorem \ref{thm.gmsc} can be understood as a multiscale analog of the $L^\infty$-loss convergence for kernel estimators (cf.\ \cite{gin3, gin2, bis, gin}). 

To give an example, let us assume that $\psi_{t,h}=\psi(\tfrac{\cdot-t}h)$ is a kernel function. By Lemmas \ref{lem.condTVreplace} and \ref{lem.phiL2}, Assumption \ref{as.testfcts} holds for $V_{t,h}=\|\psi_{t,h}\|_2=\sqrt{h}\|\psi\|_2$ whenever $\psi\neq 0$ on a Lebesgue measurable set, $\TV(\psi)<\infty$ and $\supp \psi \subset [0,1]$. Furthermore, by partial integration, we can easily verify that the process $(t,h)\mapsto\|\psi\|_2^{-1}\int \psi_{t,h}(s)dW_s$ has continuous sample paths (cf. \cite{due2}, p. 144).

For an application of Theorem \ref{thm.gmsc} to wavelet thresholding, cf. Example \ref{exam.wav_thresh} in the appendix. Let us close this section with a result on the lower bound of the approximating statistic.

Theorem \ref{thm.gmsc} shows that the approximating statistic is almost surely bounded from above. On the contrary, we have the trivial lower bound
\begin{align*}
  T_n\geq -\inf_{(t,h)\in B_n}\frac{\log \tfrac \nu h}{\log\log \tfrac \nu h},
\end{align*}
which converges to $-\infty$ in general and describes the behavior of $T_n$, provided the cardinality of $B_n$ is small (for instance if $B_n$ contains only one element). However, if $B_n$ is sufficiently rich, $T_n$ can be shown to be bounded from below, uniformly in $n$. Let us make this more precise. Assume, that for every $n$ there exists a $K_n$ such that $K_n \rightarrow \infty$ and 
\begin{align}
  B_{K_n}^\circ:=\big\{\big(\tfrac i{K_n}, \tfrac 1{K_n}\big) \ \big |\  i=0,\ldots, K_n-1\big\}
  \subset B_n.
  \label{eq.defBKncirc}
\end{align}
Then, the approximating statistic is asymptotically bounded from below by $-1/4$. This follows from Lemma \ref{lem.limitlimit} in the appendix. It is a challenging problem to calculate the distribution for general index set $B_n$ explicitly. Although the tail behavior has been studied for the one-scale case (cf.\ \cite{gin2, bis}) this has not been addressed so far for the approximating statistic in Theorem \ref{thm.gmsc}. For implementation, later on, our method relies therefore on Monte Carlo simulations.


\section{Testing for shape constraints in deconvolution}
\label{sec.mono}

We start by defining the class of differential operators in \eqref{eq.defopp}. However, before making this precise, let us define pseudo-differential operators in dimension one as well as fractional integration and differentiation. Given a real $m$, consider $S^m$ the class of functions $a:\mathbb{R}\times \mathbb{R}\rightarrow \mathbb{C}$ such that for all $\alpha,\beta \in \mathbb{N}$, 
\begin{align}
  |\partial_x^\beta \partial_\xi^\alpha a(x,\xi)|\leq C_{\alpha,\beta}(1+|\xi|)^{m-\alpha} \quad \text{for all} \  x,\xi\in \mathbb{R}.
  \label{eq.Smsymboldef}
\end{align}
Then the pseudo-differential operator $\Op(a)$ corresponding to the symbol $a$ can be defined on the Schwartz space of rapidly decreasing functions $\mathcal{S}$ by
\begin{align*}
  &\Op(a): \mathcal{S}\rightarrow \mathcal{S} \\
  &\Op(a)\phi(x):= \frac 1{2\pi} \int e^{ix \xi} a(x,\xi)\mathcal{F}(\phi)(\xi)d\xi.
\end{align*}
It is well-known that for any $s\in \mathbb{R}$, $\Op(a)$ can be extended to a continuous operator $\Op(a): H^{m+s}\rightarrow H^s$. In order to simplify the readability, we only write $\Op$ for pseudo-differential operators and $\op$ in general for operators of the form \eqref{eq.defopp}. Throughout the paper, we write $\iota_s^\alpha = \exp(\alpha \pi i \sign(s) /2)$ and understand as usual $(\pm is)^\alpha = |s|^\alpha\iota_s^{\pm \alpha}$. The Gamma function evaluated at $\alpha$ will be denoted by $\Gamma(\alpha)$. Let us further introduce the Riemann-Liouville fractional integration operators on the real axis for $\alpha>0$ by
\begin{align}
  \big(I_+^\alpha h\big)(x)&:= \frac 1{\Gamma(\alpha)}\int_{-\infty}^x \frac{h(t)}{(x-t)^{1-\alpha}}dt
  \quad \text{and} \quad \notag \\
  \big(I_-^\alpha h\big)(x)&:= \frac 1{\Gamma(\alpha)}\int_x^{\infty} \frac{h(t)}{(t-x)^{1-\alpha}}dt.
  \label{eq.Ipmalphadef}
\end{align}
For $\beta\geq 0$, we define the corresponding fractional differentiation operators $(D_+^\beta h)(x):= D^n(I_+^{n-\beta}h)(x)$ and $(D_-^\beta h)(x)=(-D)^n(I_-^{n-\beta}f)(x)$, where $n=\lfloor\beta\rfloor+1$. For any $s\in \mathbb{R}$, we can extend $D_+^\beta$ and $D_-^\beta$ to continuous operators from $H^{\beta+s}\rightarrow H^s$ using the identity (cf. \cite{kil}, p.90),
\begin{align}
  \mathcal{F}\big(D_{\pm}^\beta h\big)(\xi)
  =(\pm i\xi)^\beta \mathcal{F}\big(h\big)(\xi)
  =\iota_\xi^{\pm \beta} |\xi|^\beta \mathcal{F}\big(h\big)(\xi).
\end{align}

In this paper, we consider operators $\op(p)$ which ``factorize'' into a pseudo-differential operator and a fractional differentiation in Riemann-Liouville sense. More precisely, the symbol $p$ is in the class
\begin{align*}
  \underline S^m:=\big\{  \ (x,\xi)\mapsto p(x,\xi)=a(x,\xi)|\xi|^\gamma \iota_\xi^\mu \ \big| \ &a\in S^{\om}, \ m=\om+\gamma, \\ &\gamma\in\{0\} \cup [1,\infty), \  \mu \in \mathbb{R}\ \big \}.
\end{align*}
Let us mention that we cannot allow for all $\gamma\geq 0$ since in our proofs it is essential that $\partial_\xi^2p(x,\xi)$ is integrable. The results can also be formulated for finite sums of symbols, i.e. $\sum_{j=1}^J p_j$ and $p_j \in \underline S^m$. However, for simplicity we restrict us to $J=1$.

Throughout the remaining part of the paper, we will always assume that $\op(p)f$ is continuous. A closed rectangle in $\mathbb{R}^2$ parallel to the coordinate axes with vertices $(a_1,b_1), (a_1,b_2), (a_2, b_1), (a_2,b_2), a_1<a_2, \ b_1<b_2$ will be denoted by $[a_1,a_2]\times [b_1,b_2]$.

The main objective of this paper is to obtain uniform confidence statement of the following kinds:

\begin{enumerate}
 \item[($i$)] The number and location of the roots and maxima of $\op(p)f$.
 \item[($ii$)] Simultaneous identification of intervals of the form $[t_i,t_i+h_i], \ t_i\in [0,1], h_i>0, \ i$ in some index set $I$, with the following property: For a pre-specified confidence level we can conclude that for all $i\in I$ the functions $(\op(p)f)|_{[t_i,t_i+h_i]}$ attain, at least on a subset of $[t_i,t_i+h_i]$, positive values.

 \item[($ii'$)] Same as $(ii)$, but we want to conclude that $(\op(p)f)|_{[t_i,t_i+h_i]}$ has to attain negative values.
 \item[($iii$)] For any pair $(t,h)\in B_n$ with $B_n$ as in \eqref{eq.Bndef}, we want to find $b_-(t,h,\alpha)$ and $b_+(t,h,\alpha)$, such that we can conclude that with overall confidence $1-\alpha$, the graph of $\op(p)f$ (denoted as $\graph(\op(p)f)$ in the sequel) has a non-empty intersection with every rectangle $[t,t+h]\times[b_-(t,h,\alpha),b_+(t,h,\alpha)]$.
\end{enumerate}

In the following we will refer to these goals as Problems $(i)$, $(ii)$, $(ii')$ and $(iii)$, respectively. Note that $(ii)$ follows from $(iii)$ by taking all intervals $[t,t+h]$ with $b_-(t,h,\alpha)>0$. Analogously, $[t,t+h]$ satisfies ($ii'$) whenever $b_+(t,h,\alpha)<0$. The geometrical ordering of the intervals obtained by $(ii)$ and $(ii')$ yields in a straightforward way a lower bound for the number of roots of $\op(p)f$, solving Problem ($i$) (cf.\ also D\"umbgen and Walther \cite{due1}). A confidence interval for the location of a root can be constructed as follows: If there exists $[t,t+h]$ such that $b_-(t,h,\alpha)>0$ and $[\widetilde t, \widetilde t+\widetilde h]$ with $b_+(\widetilde{t},\widetilde{h},\alpha)<0$, then, with confidence $1-\alpha$, $\op(p)f$ has a zero in the interval $\bigl[ \min(t,\widetilde{t}), \max(t+h,\widetilde{t}+\widetilde{h}) \bigr]$. The maximal number of disjoint intervals on which we find zeros is then an estimator for the number of roots.

\begin{exam}
\label{exam.A=D}
In the example in Section \ref{subs.example_Laplace_deconv} we had $\op(p)=D$. In this case we want to find a collection of intervals $[t,t+h]$ such that with overall probability $1-\alpha$ for each such interval there exists a nondegenerate subinterval on which $f$ is strictly monotonically increasing/decreasing.
\end{exam}

Instead of studying qualitative features of $X$ directly, we might as well be interested in properties of the density of a transformed random variable $q(X)$. If $X$ is non-negative and $a>0$, $q$ could be for instance a (slightly regularized) log-transform $q=\log(\cdot+a)$.

\begin{exam}
\label{ex.chofva}
Suppose that we want to analyze the convexity/concavity properties of $U=q(X)$, where $q$ is a smooth function, which is strictly monotone increasing on the support of the distribution of $X$. Let $f_U$ denote the density of $U$. Then, by change of variables
\begin{align*}
  f_U(y)=\frac 1{q'\big(q^{-1}(y)\big)}f\big(q^{-1}(y)\big),
\end{align*}
and there is a pseudo-differential operator $\Op(p)$ with symbol
\begin{align*}
  p(x,\xi)=-\frac{1}{(q'(x))^2}\xi^2-
  \frac{q''(x)q'(x)+2q''(x)}{(q'(x))^4} i\xi
  +\frac{3(q''(x))^2-q'''(x)q'(x)}{(q'(x))^5},
\end{align*}
such that $f''_U(y)=(\op(p)f)(q^{-1}(y))$. Therefore, $$\graph(\op(p)f) \cap [t,t+h]\times [b_-(t,h,\alpha),b_+(t,h,\alpha)] \neq \varnothing$$ implies
$$
	\graph(f''_U)\cap [q(t),q(t+h)]\times [b_-(t,h,\alpha),b_+(t,h,\alpha)] \neq \varnothing.
$$
In particular, if $b_-(t,h,\alpha)>0$ then, with confidence $1-\alpha$, we may conclude that $f_U$ is strictly convex on a nondegenerate subinterval of $[q(t),q(t+h)]$.

\end{exam}

\begin{exam}[Noisy Wicksell problem]
\label{exam.noisy_w_problem}
In the classical Wicksell problem, cross-sections of a plane with randomly distributed balls in three-dimensional space are observed. From these observations the distribution $H$ or density $h=H'$ of the squared radii of the balls has to be estimated (cf. Groeneboom and Jongbloed \cite{gro}). Statistically speaking, we have observations $X_1,\ldots,X_n$ with density $f$ satisfying the following relationship (cf. Golubev and Levit \cite{gol2}) 
\begin{align*}
  1-H(x) \propto \int_x^\infty \frac{f(t)}{(t-x)^{1/2}}dt
  = \Gamma(\tfrac 12) (I_{-}^{1/2} f)(x), \quad \text{for all} \ x\in [0,\infty),
\end{align*}
where $\propto$ means up to a positive constant and $I_{-}^{1/2}$ as in \eqref{eq.Ipmalphadef}. Suppose now, that we are interested in monotonicity properties of the density $h=H'$ on $[0,1]$. For $x>0$, $-h'\lessgtr 0$ iff the fractional derivative of order $3/2$ satisfies $(D^{3/2}_{-}f)(x)=D^2 (I_{-}^{1/2} f)(x) \lessgtr 0$. It is reasonable to assume in applications that the observations are corrupted by measurement errors, which means we only observe $Y_i=X_i+\epsilon_i$ as in model \eqref{eq.deconmod}. Hence, we are in the framework described above and the shape constraint is given by $\op(p)f\lessgtr 0$ for $p(x,\xi)=\iota_\xi^{-3/2}|\xi|^{3/2}$.
\end{exam}

In order to formulate our results in a proper way, let us introduce the following definitions. We say that a pseudo-differential operator $\Op(a)$ with $a\in S^m$ and $S^m$ as in \eqref{eq.Smsymboldef}, is elliptic, if there exists $\xi_0$ such that $|a(x,\xi)|>K|\xi|^m$ for a positive constant $K$ and all $\xi$ satisfying $|\xi|>|\xi_0|$. In the framework of Example \ref{ex.chofva} for instance, ellipticity holds if $\|q'\|_\infty<\infty$. It is well-known that ellipticity is equivalent to a generalized invertibility of the operator. Furthermore, for an arbitrary symbol $p\in S^{\overline m}$ let us denote by $\Op(p^\star)$ the adjoint of $\Op(p)$ with respect to the inner product $\langle \cdot,\cdot \rangle$. This is again a pseudo-differential operator and $p^\star \in S^{\overline m}$. Formally, we can compute $p^\star$ by $p^\star(x,\xi)=e^{\partial_x \partial_\xi}\overline p(x,\xi)$, where $\overline p$ denotes the complex conjugate of $p$. Here the equality holds in the sense of asymptotic summation (for a precise statement see Theorem 18.1.7 in H\"ormander \cite{hoer}). Now, suppose that we have a symbol in $\underline S^m$ of the form $a|\xi|^\gamma\iota_\xi^\mu=a(x,\xi)|\xi|^\gamma\iota_\xi^\mu$ with $a\in S^{\om}$ and $\om+\gamma=m$. Since for any $u,v\in H^{m}$,
\begin{align}
  \langle \op(a|\xi|^\gamma\iota_\xi^\mu)u,v \rangle 
  &= \langle \Op(a)\op(|\xi|^\gamma\iota_\xi^\mu)u,v \rangle
  = \langle \op(|\xi|^\gamma\iota_\xi^\mu)u, \Op(a^\star)v \rangle \notag \\
  &= \langle u, \op(|\xi|^\gamma\iota_\xi^{-\mu})\Op(a^\star)v \rangle
  \label{eq.comp_adj}
\end{align}
we conclude that $\mathcal{F}(\op(a|\xi|^\gamma\iota_\xi^\mu)^\star\phi)= |\xi|^\gamma\iota_\xi^{-\mu} \mathcal{F}(\Op(a^\star) \phi)$ for all $\phi \in H^m$. 

In order to formulate the assumptions and the main result, let us fix one symbol $p\in \underline S^m$ and one factorization $p(x,\xi)=a(x,\xi)|\xi|^\gamma\iota_\xi^{\mu}$ with $a, \gamma, \mu$ as in the definition of $\underline S^m$.

\begin{ass}
\label{ass.noise}
We assume that there is a positive real number $r>0$ and constants $0<C_l\leq C_u<\infty$ such that the characteristic function of $\epsilon$ is bounded from below and above by
\begin{align*}
	C_l\langle s\rangle^{-r} \leq  |\E \, e^{-is\epsilon}|
	= |\mathcal{F}(f_{\epsilon})(s)|
	\leq C_u\langle s\rangle^{-r} \quad \text{for all} \ s\in \mathbb{R}.
\end{align*}
Moreover, suppose that the second derivative of $\mathcal{F}(f_\epsilon)$ exists and
$$
	\langle s\rangle |D\mathcal{F}(f_\epsilon)(s)|+\langle  s\rangle^2 |D^2 \mathcal{F}(f_\epsilon)(s)|\leq C_u\langle s\rangle^{-r} \quad \text{for all} \ s\in \mathbb{R}.
$$
\end{ass}

These are the classical assumptions on the decay of the Fourier transform of the error density in the moderately ill-posed case (cf. Assumptions (G1) and (G3) in Fan \cite{Fan91}). Heuristically, we can think of $\mathcal{F}(f_{\epsilon})$ as an elliptic symbol in $S^{-r}$.

Let $\Re$ denote the projection on the real part. For sufficiently smooth $\phi$ consider the test statistic
\begin{align}
  T_{t,h}:= \frac 1{\sqrt{n}}\sum_{k=1}^n \Re \ v_{t,h}(Y_k)
  = \frac 1{\sqrt{n}}\sum_{k=1}^n \Re \ v_{t,h}(G^{-1}(U_k))
  \label{eq.Tthdef}
\end{align}
with
\begin{align}
  v_{t,h}=\mathcal{F}^{-1}\big(\lambda_\gamma^\mu(\cdot)\mathcal{F}\big(\Op(a^\star)(\phi\circ S_{t,h})\big)\big)
  \label{eq.vthdef}
\end{align}
and
\begin{align}
  \lambda(s)=\lambda_\gamma^\mu(s)=\frac{|s|^\gamma\iota_s^{-\mu}}{\mathcal{F}(f_\epsilon)(-s)}.
  \label{eq.lambdadef}
\end{align}
From \eqref{eq.Etthexp} and \eqref{eq.comp_adj}, we find that for $f\in H^m$,
\begin{align*}
  \E\, T_{t,h} 
  &= \sqrt{n}\int (\phi \circ S_{t,h}) (x) \Re \big(\op(p)f\big)(x) dx.
\end{align*}
Proceeding as in Section \ref{sec.genmsc} we consider the multiscale statistic
\begin{align}
  T_n=\sup_{(t,h)\in B_n} w_h\left(\frac{\big|T_{t,h}-\E [T_{t,h}]\big|}{\sqrt{\widehat g_n(t)} \ \|v_{t,h}\|_2}-\sqrt{2\log \tfrac \nu h}\right),
  \label{eq.Tndef}
\end{align}
i.e. with the notation of \eqref{eq.defgenmsc}, we set $\psi_{t,h}:=\Re v_{t,h}$ and $V_{t,h}:=\|v_{t,h}\|_2$. Define further
\begin{align*}
  T_n^\infty(W)
  :=
  \sup_{(t,h)\in B_n} w_h \left(
  \frac{\big|\int \Re v_{t,h}(s)dW_s \big|}{\|v_{t,h}\|_2}-\sqrt{2\log \tfrac \nu h }\right).
\end{align*}

\begin{thm}
\label{thm.msc}
Given an operator $\op(p)$ with symbol $p \in \underline S^m$ and let $T_n$ be as in \eqref{eq.Tndef}. Work in model \eqref{eq.deconmod} under Assumption \ref{ass.noise}. Suppose that 
\begin{itemize}
 \item[(i)] $l_n n \log^{-3}n \rightarrow \infty$ and $u_n=o(\log^{-3}n)$,
 \item[(ii)] $\phi \in H_4^{\lfloor r+m+5/2\rfloor}$, \ $\supp \phi \subset [0,1]$, and \ $\TV(D^{\lfloor r+m+5/2\rfloor} \phi)<\infty$,
 \item[(iii)] $\Op(a)$ is elliptic.
\end{itemize}
Then, there exists a (two-sided) standard Brownian motion $W$, such that for $\nu>e$,
\begin{align}  
   \sup_{G\in \mathcal{G}_{c,C,q}} \Big |T_n- T_n^\infty(W)\Big | = o_P(r_n),
    \label{eq.mainstate}
\end{align}
with
\begin{align*}
  r_n=\sup_{G\in \mathcal{G}}\big\|\widehat g_n- g\big\|_\infty \frac{\log n}{\log\log n}+l_n^{-1/2}n^{-1/2}\frac{\log^{3/2} n}{\log\log n}
  + u_n^{1/2}\log^{3/2}n.
\end{align*}
Moreover,
\begin{align}  
  \sup_{(t,h)\in\mT}
  w_h \left(
  \frac{\big|\int \Re v_{t,h}(s) dW_s \big|}{\|v_{t,h}\|_2}-\sqrt{2\log \tfrac \nu h }\right) \  <\infty, \quad \text{a.s.}
  \label{eq.limit2}
\end{align}
Hence, the approximating statistic $T_n^\infty(W)$ is almost surely bounded from above by \eqref{eq.limit2}.
\end{thm}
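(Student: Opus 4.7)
The plan is to apply the general multiscale coupling, Theorem~\ref{thm.gmsc}, with the identifications $\psi_{t,h}:=\Re\, v_{t,h}$ and $V_{t,h}:=\|v_{t,h}\|_2$; then \eqref{eq.mainstate} and \eqref{eq.limit2} will follow from \eqref{eq.approx_by_distr_free_thm1} and \eqref{eq.limitbd1}, respectively. Condition~(i) of Assumption~\ref{as.testfcts} is immediate, since $\|\Re v\|_2\le\|v\|_2$. The substantive work has three parts: verify (ii)-(iv) for the specific $v_{t,h}$ coming from \eqref{eq.vthdef}, verify continuity of the sample paths of the Gaussian limit, and show that the resulting $\kappa_n$ is absorbed into the rate $r_n$ stated in Theorem~\ref{thm.msc}.

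The analytic engine is pseudo-differential calculus. By \eqref{eq.comp_adj}, $v_{t,h}$ is the image of $\phi\circ S_{t,h}$ under an operator whose symbol is $\lambda(\xi)\,a^\star(x,\xi)$. Assumption~\ref{ass.noise} ensures $|\lambda(\xi)|\asymp\langle\xi\rangle^{\gamma+r}$ at high frequencies with controlled first and second derivatives, while $a^\star\in S^{\overline m}$, so this operator has total order $\overline m+\gamma+r=m+r$. Rescaling $\xi\mapsto\xi/h$ in the Fourier representation yields, up to smoother remainders,
\begin{equation*}
  v_{t,h}(u)\;=\;h^{-m-r}\,\widetilde v_{t,h}\!\bigl(\tfrac{u-t}{h}\bigr),
\end{equation*}
where the family $\{\widetilde v_{t,h}\}$ is uniformly bounded above in $L^1$, $L^2$, $L^\infty$ and total variation, and uniformly bounded \emph{below} in $L^2$. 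The required $L^\infty$ and $\TV$ control after applying an order-$(r+m)$ operator is supplied by $\phi\in H^{\lfloor r+m+5/2\rfloor}$ together with $\TV(D^{\lfloor r+m+5/2\rfloor}\phi)<\infty$; the weight index $\ell=4$ translates into integrable polynomial decay of $v_{t,h}$ at infinity; and ellipticity of $\Op(a)$ supplies the matching $L^2$ lower bound. Consequently $\|v_{t,h}\|_p\asymp h^{1/p-m-r}$ for $p\in\{1,2,\infty\}$ and $\TV(v_{t,h})\asymp h^{-m-r}$, which is precisely the scaling demanded by Assumption~\ref{as.testfcts}(ii). Condition (iv) follows by differentiating the same Fourier representation in $(t,h)$, and continuity of the sample paths of $V_{t,h}^{-1}\int\Re v_{t,h}(s)\,dW_s$ on $\mT$ is then a consequence of (iv) combined with a Kolmogorov criterion (or, alternatively, with integration by parts, using that $v_{t,h}$ is of bounded variation).

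For (iii), the constraint $g\in\mathcal{J}(C,q)$ gives $|\sqrt{g(s)}-\sqrt{g(t)}|\le C(1+|s|+|t|)^q|s-t|$, which is $O(h)$ on the essential support of $v_{t,h}$ (a scale-$h$ neighborhood of $t$, with polynomial tails controlled by the weight index $4$). Combining this with the product rule for total variation and the scalings above yields
\begin{equation*}
  \frac{\TV\!\bigl(\psi_{t,h}(\cdot)\,[\sqrt{g(\cdot)}-\sqrt{g(t)}]\,\langle\cdot\rangle^{\alpha}\bigr)}{V_{t,h}}\;\lesssim\;h^{1/2}
\end{equation*}
uniformly in $(t,h)\in B_n$ and $G\in\mathcal G_{c,C,q}$. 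Multiplying by $w_h$ and taking the supremum gives $\kappa_n=O\!\bigl(u_n^{1/2}\sqrt{\log(1/u_n)}/\log\log(1/u_n)\bigr)$, which under $u_n=o(\log^{-3}n)$ is dominated by $u_n^{1/2}\log^{3/2}n$. Hence all hypotheses of Theorem~\ref{thm.gmsc} are in force, \eqref{eq.approx_by_distr_free_thm1} yields the coupling \eqref{eq.mainstate}, and \eqref{eq.limit2} is a direct translation of \eqref{eq.limitbd1}.

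The main technical obstacle is the low-frequency behavior of the multiplier $\lambda$: since $|\xi|^\gamma$ is not smooth at the origin and $\iota_\xi^{-\mu}$ is generally discontinuous there, the product $\lambda\cdot a^\star$ is not a genuine H\"ormander symbol, and neither the scaling argument above nor the uniform $L^2$ lower bound on $V_{t,h}$ can be read off from standard symbolic calculus. The remedy is to split the Fourier integral into a low- and a high-frequency piece: the high-frequency piece is treated by standard calculus applied to the elliptic symbol $a\in S^{\overline m}$ combined with the decay of $\mathcal{F}(f_\epsilon)$, while the low-frequency piece is handled by direct integration, using that $\partial_\xi^2\lambda$ is locally integrable exactly when $\gamma\in\{0\}\cup[1,\infty)$, which is why that restriction appears in the definition of $\underline S^m$ and why the strong weighted-Sobolev condition $\phi\in H^{\lfloor r+m+5/2\rfloor}_4$ is imposed on $\phi$.
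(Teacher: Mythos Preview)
Your approach is the paper's approach: reduce Theorem~\ref{thm.msc} to Theorem~\ref{thm.gmsc} by verifying Assumption~\ref{as.testfcts} for $\psi_{t,h}=\Re v_{t,h}$, $V_{t,h}=\|v_{t,h}\|_2$, and checking sample-path continuity. The paper carries this out via explicit pointwise decay bounds (Lemma~\ref{lem.Kthestimates}) rather than the rescaled-kernel heuristic $v_{t,h}(u)=h^{-m-r}\widetilde v_{t,h}((u-t)/h)$, with the $L^2$ lower bound coming from ellipticity (Lemma~\ref{lem.vthL2}) and condition~(iv) from an $H^{r+m}$ increment estimate (Lemma~\ref{lem.vthvt'h'}).

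One point in your sketch is slightly off and is worth flagging, because it explains why the rate $r_n$ in Theorem~\ref{thm.msc} carries the term $u_n^{1/2}\log^{3/2}n$ rather than the term $\sqrt{u_n\log(1/u_n)}/\log\log(1/u_n)$ inherited verbatim from Theorem~\ref{thm.gmsc}. Your bound $\TV(\psi_{t,h}[\sqrt{g}-\sqrt{g(t)}]\langle\cdot\rangle^\alpha)/V_{t,h}\lesssim h^{1/2}$ is too optimistic. The function $v_{t,h}$ is not compactly supported; two integrations by parts in the Fourier representation give only $|v_{t,h}(u)|\lesssim h^{-m-r}\min(1,h^2/(u-t)^2)$ and a matching derivative bound, and one cannot integrate by parts a third time in general because $\partial_\xi^2\lambda$ already has a borderline-integrable singularity at $\xi=0$ when $\gamma\in[1,2)$. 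On the annulus $h\le|u-t|\le1$ the Lipschitz factor $|\sqrt{g(u)}-\sqrt{g(t)}|\lesssim|u-t|$ combined with these bounds yields a total-variation contribution of order $h^{1-m-r}\int_h^1|u-t|^{-1}\,du=h^{1-m-r}\log(1/h)$, so that $\kappa_n\lesssim\sup_h w_h\,h^{1/2}\log(1/h)\lesssim u_n^{1/2}\log^{3/2}n$. Your final conclusion survives, but the intermediate claim and the shape of $r_n$ do not follow from the localized-kernel picture alone.
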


One can easily show using Lemma \ref{lem.limitlimit}, that if $B_n$ contains \eqref{eq.defBKncirc} and the symbol $p$ does not depend on $t$, then the approximating statistic is also bounded from below. Furthermore, the case $\epsilon=0$ can be treated as well (we can define $\mathcal{F}(f_\epsilon)=1$ in this case). In particular, our framework allows for the important case $\epsilon=0$ and $\op(p)$ the identity operator, which cannot be treated with the results from \cite{due1}.


For special choices of $p$ and $f_\epsilon$ the functions $(v_{t,h})_{t,h}$ have a much simpler form, which allows to read off the ill-posedness of the problem from the index of the pseudo-differential operator associated with $v_{t,h}$. Let us shortly discuss this. Suppose Assumption \ref{ass.noise} holds and additionally $\langle s\rangle^k |D^k\mathcal{F}(f_\epsilon)(s)|\leq C_k \langle s\rangle^{-r}$ for all $s\in \mathbb{R}$ and $k=3,4,\ldots$ Then $(x,\xi)\mapsto \mathcal{F}(f_\epsilon)(-\xi)$ defines a symbol in $S^{-r}$. Because of the lower bound in Assumption \ref{ass.noise}, $C_l\langle \xi\rangle^{-r}\leq |\mathcal{F}(f_\epsilon)(-\xi)|$, the corresponding pseudo-differential operator is elliptic and $(x,\xi)\mapsto 1/\mathcal{F}(f_\epsilon)(-\xi)$ is the symbol of a parametrix and consequently an element in $S^r$ (cf. H\"ormander \cite{hoer}, Theorem 18.1.9). 
If $\phi \in H^{r+m}$ and $p\in \underline{S}^m \cap S^m$, then
\begin{align*}
  v_{t,h}(u)
  &=\frac 1{2\pi}\int \mathcal{F}\big(\Op\big(\tfrac 1{\mathcal{F}(f_\epsilon)(-\cdot)}\big)
  \circ \Op(p^\star) \big(\phi\circ S_{t,h}\big)\big) (s) e^{isu} ds \\
  &=\Op\big(\tfrac 1{\mathcal{F}(f_\epsilon)(-\cdot)}\big)
  \circ \Op(p^\star) \big(\phi\circ S_{t,h}\big)(u).
\end{align*}
Pseudo-differential operators are closed under composition. More precisely, $p_j\in S^{m_j}$ for $j=1,2$ implies that the symbol of the composed operator is in $S^{m_1+m_2}$. Therefore, there is a symbol $\widetilde p\in S^{m+r}$ such that $v_{t,h}=\Op(\widetilde p)(\phi\circ S_{t,h})$. Hence, for fixed $h$, the function $t\mapsto v_{t,h}$ can be viewed as a kernel estimator with bandwidth $h$. Furthermore, the problem is completely determined by the composition $\Op(\widetilde p)$ and this yields a heuristic argument why (as it will turn out later) the ill-posedness of the detection problem $\Re \op(p)f\lessgtr 0$ in model (\ref{eq.deconmod}) is determined by the sum $m+r$, i.e. 
\begin{center}
                                                                                                                                                                                                                                                                                                                                        ill-posedness of shape constraint $+$ ill-posedness of deconvolution problem. 
                                                                                                                                                                                                                                                                                                                                       \end{center}
Suppose further that $r$ and $m$ are integers and $\Op(p)$ is a differential operator of the form 
\begin{align}
  \sum_{k=1}^m a_k(x)D^k
  \label{eq.adiffop}
\end{align}
with smooth functions $a_k$ $k=1,\ldots,m$ and $a_m$ bounded uniformly away from zero. If $1/\mathcal{F}(f_\epsilon)(-\cdot)$ is a polynomial of degree $r$ (which is true for instance if $\epsilon$ is Exponential, Laplace or Gamma distributed) then $\Op(\widetilde p)$ is again of the form \eqref{eq.adiffop} but with degree $m+r$ and hence $v_{t,h}(u)$ is essentially a linear combination of derivatives of $\phi$ evaluated at $(u-t)/h$.
However, these assumptions on the error density are far to restrictive. In the following paragraph we will show that even under more general conditions the approximating statistic has a very simple form.

\subsection*{Principal symbol} In order to perform our test, it is necessary to compute quantiles of the approximating statistic in Theorem \ref{thm.msc}. Since the approximating statistic has a relatively complex structure let us give conditions under which it can be simplified considerably. First, we impose a condition on the asymptotic behavior of the Fourier transform of the errors. Similar conditions have been studied by Fan \cite{Fan91b} and Bissantz et {\it al.} \cite{bis}. Recall that for any $\alpha, a \in \mathbb{R}, \ s\neq 0$, $D\iota_s^\alpha |s|^a=D(is)^{a_1}(-is)^{a_2}=ai \iota_s^{\alpha-1} |s|^{a-1}$ with $a_1=(a+\alpha)/2$ and $a_2=(a-\alpha)/2$.

\begin{ass}
\label{as.Ffepsilonapprox}
Suppose that there exist $\beta_0>1/2$, $\rho \in [0,4)$ and positive numbers $A, C_\epsilon$ such that
\begin{align*}
  \big|A\iota_s^\rho |s|^r\mathcal{F}(f_\epsilon)(s)-1 \big|
  +\big|A r^{-1} i\iota_s^{\rho+1} |s|^{r+1}D\mathcal{F}(f_\epsilon)(s)-1 \big|
  \leq C_\epsilon \langle s\rangle^{-\beta_0}, \ \forall s\in \mathbb{R}.
\end{align*}
\end{ass}

For instance the previous assumption holds with $A=\theta^r$ and $\rho\equiv r \mod 4$, if $f_\epsilon$ is the density of a $\Gamma(r,\theta)$ distributed random variable. In this case $\mathcal{F}(f_\epsilon)(s)=(1+i\theta s)^{-r}$.

\begin{ass}
 \label{as.prinsymb}
Given $m=\{0\}\cup [1,\infty)$, suppose there exists a decomposition $p=p_P+p_R$ such that $p_R\in \underline S^{m'}$ for some $m'<m$, and
\begin{align*}
  p_P(x,\xi)=a_P(x)|\xi|^m\iota_\xi^\mu , \quad \text{for all} \ x,\xi\in \mathbb{R}, 
\end{align*}
with $(x,\xi)\mapsto a_P(x) \in S^0$, $a_P$ real-valued and $|a_P(\cdot)|>0$.
\end{ass}


For $s\neq 0$, $\iota_s^2=-1$. Assume that in the special case $m=0$ we have $|\rho+\mu|\leq r$. Then, we can (and will) always choose $\rho$ and $\mu$ in Assumptions \ref{as.Ffepsilonapprox} and \ref{as.prinsymb} such that $\sigma=(r+m+\rho+\mu)/2$ and $\tau=(r+m-\rho-\mu)/2$ are non-negative. The symbol $p_P$ is called principal symbol. We will see that, together with the characteristics from the error density, it completely determines the asymptotics. The condition basically means that there is a smooth function $b$ such that the highest order of the pseudo-differential operator coincides with $a_P(x)D^m$. Note that principal symbols are usually defined in a slightly more general sense, however Assumption \ref{as.prinsymb} turns out to be appropriate for our purposes. In particular, the last assumption is verified for Examples \ref{exam.A=D}-\ref{exam.noisy_w_problem}.

In the following, we investigate the approximation of the multiscale test statistic
\begin{align}
  T_n^P:=\sup_{(t,h)\in B_n} w_h\left(\frac{h^{r+m-1/2} \ \big|T_{t,h}-\E [T_{t,h}]\big|}{\sqrt{\widehat g_n(t)} \  |A a_P(t)|\ \|D_+^{r+m} \phi\|_2}-\sqrt{2\log \tfrac \nu h}\right),
  \label{eq.TnPdef}
\end{align}
by
\begin{align*}
  T_n^{P,\infty}(W)
  :=
  \sup_{(t,h)\in B_n} w_h \left(
  \frac{\big|\int D_+^{\sigma}D_{-}^\tau \phi\big(\tfrac{s-t}h\big)dW_s \big|}{\|D_+^{r+m} \phi\big(\tfrac{\cdot-t}h\big)\|_2}-\sqrt{2\log \tfrac \nu h }\right).
\end{align*}

\begin{thm}
\label{thm.speciallimits}
Work under Assumptions \ref{ass.noise}, \ref{as.Ffepsilonapprox} and \ref{as.prinsymb}. Suppose further, that
\begin{itemize}
 \item[(i)] $l_n n \log^{-3}n \rightarrow \infty$ and $u_n=o(\log^{-(3\vee (m-m')^{-1})}n)$,
 \item[(ii)] $\phi \in H_3^{\lfloor r+m+5/2\rfloor}$, \ $\supp \phi \subset [0,1]$, and \ $\TV(D^{\lfloor r+m+5/2\rfloor} \phi)<\infty$,
 \item[(iii)] If $m=0$ assume that $r>1/2$ and $|\mu+\rho|\leq r$.
\end{itemize}
Then, there exists a (two-sided) standard Brownian motion $W$, such that for $\nu>e$,
\begin{align*}  
   \sup_{G\in \mathcal{G}_{c,C,q}} \Big |T_n^P- T_n^{P,\infty}(W)\Big | = o_P(1),
\end{align*}
and the approximating statistic $T_n^{P,\infty}(W)$ is almost surely bounded from above by 
\begin{align}  
  \sup_{(t,h)\in\mT}
  w_h \left(
  \frac{\big|\int D_+^{\sigma}D_{-}^\tau \phi\big(\tfrac{s-t}h\big) dW_s \big|}{\|D_+^{r+m} \phi\big(\tfrac{\cdot-t}h\big)\|_2}-\sqrt{2\log \tfrac \nu h }\right) \  <\infty, \quad \text{a.s.}
  \label{eq.limitbd3}
\end{align}
\end{thm}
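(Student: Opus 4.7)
The plan is to deduce Theorem~\ref{thm.speciallimits} from the general multiscale coupling of Theorem~\ref{thm.gmsc} by exploiting Assumptions~\ref{as.Ffepsilonapprox} and~\ref{as.prinsymb} to reduce $v_{t,h}$ to an explicit principal asymptotic form. The expansion $\mathcal{F}(f_\epsilon)(-s)^{-1} = A\iota_s^{-\rho}|s|^r(1+O(\langle s\rangle^{-\beta_0}))$ from Assumption~\ref{as.Ffepsilonapprox}, combined with $a^\star(x,\xi) = a_P(x)|\xi|^{m-\gamma}+\text{(lower order in $\xi$)}$ (obtained from the principal-symbol decomposition $p=p_P+p_R$ and the standard adjoint asymptotic expansion for pseudo-differential operators), plugged into \eqref{eq.vthdef}, yields
\[
\mathcal{F}(v_{t,h})(s) \;=\; A\,a_P(t)\,|s|^{r+m}\iota_s^{-(\mu+\rho)}\,\mathcal{F}(\phi\circ S_{t,h})(s) + R_{t,h}(s),
\]
where the remainder $R_{t,h}$ collects a $\langle s\rangle^{-\beta_0}$ relative error from Assumption~\ref{as.Ffepsilonapprox}, a $|\xi|^{m'}$ contribution from $p_R\in\underline S^{m'}$, and a $|\xi|^{\overline m -1}$ contribution from the adjoint expansion.

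After rescaling $\xi=hs$ and inverting, this gives the spatial expansion
\[
\Re v_{t,h}(u) \;=\; A\,a_P(t)\,h^{-(r+m)}\,\phi_0\!\bigl(\tfrac{u-t}{h}\bigr) + \rho_{t,h}(u),
\]
where $\phi_0$ is the real-valued function with Fourier multiplier $|s|^{r+m}\iota_s^{-(\mu+\rho)}$. Parseval yields $\|\phi_0\|_2 = \|D_+^{r+m}\phi\|_2$, so $\|v_{t,h}\|_2 = |A\,a_P(t)|\,h^{1/2-(r+m)}\|D_+^{r+m}\phi\|_2(1+o(1))$ uniformly in $(t,h)\in B_n$, and standard kernel-type bounds control $\|\Re v_{t,h}\|_1$, $\|\Re v_{t,h}\|_\infty$ and $\TV(\Re v_{t,h})$ in the same way. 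These estimates verify Assumption~\ref{as.testfcts} with $\psi_{t,h}:=\Re v_{t,h}$ and $V_{t,h}:=|A\,a_P(t)|h^{1/2-(r+m)}\|D_+^{r+m}\phi\|_2$, and Theorem~\ref{thm.gmsc} produces a Brownian motion $W$ for which $T_n^P$ is uniformly close to $\sup_{B_n} w_h\bigl(V_{t,h}^{-1}\bigl|\int \Re v_{t,h}\,dW\bigr|-\sqrt{2\log(\nu/h)}\bigr)$.

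Two further identifications pass to $T_n^{P,\infty}(W)$. The $L^2$-smallness of $\rho_{t,h}$ (together with Gaussian concentration weighted by $w_h\sqrt{\log(\nu/h)}$) allows replacing $\Re v_{t,h}$ by its principal form $A\,a_P(t)h^{-(r+m)}\phi_0(\tfrac{\cdot-t}{h})$ uniformly. Next, from $\sigma+\tau=r+m$, $\sigma-\tau=\mu+\rho$ and the Fourier rule $\mathcal{F}(D_\pm^\beta\cdot)(s)=(\pm is)^\beta\mathcal{F}(\cdot)(s)$, one has $|\mathcal{F}(\phi_0)|\equiv|\mathcal{F}(D_+^\sigma D_-^\tau\phi)|$, so the Gaussian processes $(t,h)\mapsto\int\phi_0(\tfrac{\cdot-t}{h})\,dW$ and $(t,h)\mapsto\int D_+^\sigma D_-^\tau\phi(\tfrac{\cdot-t}{h})\,dW$ have identical covariance on $\mT$ and hence the same law. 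A measurable reselection of the driving Brownian motion (enlarging the probability space if necessary) converts one into the other and yields the coupling \eqref{eq.mainstate}; the almost-sure finiteness \eqref{eq.limitbd3} then follows from \eqref{eq.limit2} together with this identification, or alternatively from a direct D\"umbgen--Spokoiny entropy argument applied to the kernel $D_+^\sigma D_-^\tau\phi$.

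The main obstacle is uniformity across the wide scale range $h\in[l_n,u_n]$: after rescaling $\xi=hs$ the remainder terms pick up negative powers of $h$, and in particular the $p_R$ contribution produces a relative error of order $h^{m-m'}$, which when multiplied by $w_h\sqrt{\log(\nu/h)}$ and supremized must still vanish. The scale restriction $u_n=o(\log^{-(3\vee 1/(m-m'))}n)$ in hypothesis~(i) is calibrated for precisely this purpose, and the delicate bookkeeping of these constants---together with the smoothness/support hypothesis~(ii) on $\phi$ and the sign constraint in~(iii) ensuring $\sigma,\tau\ge 0$ when $m=0$---is the technical heart of the proof.
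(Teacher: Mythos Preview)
Your overall strategy---expand $v_{t,h}$ about the principal symbol and the leading behaviour of $\mathcal{F}(f_\epsilon)^{-1}$, couple via the general multiscale theorem, then discard the remainder---is exactly the paper's. The paper organises it somewhat differently: instead of applying Theorem~\ref{thm.gmsc} from scratch to $(\Re v_{t,h},V_{t,h}^P)$, it splits $v_{t,h}=v_{t,h}^{(1)}+v_{t,h}^{(2)}$ according to $p=p_P+p_R$ and invokes the already-proved Theorem~\ref{thm.msc} twice (once with order $m$ for the principal part, once with order $m'$ for the remainder). This packaging avoids having to re-verify Assumption~\ref{as.testfcts} for the non-standard normaliser $V_{t,h}^P$; in particular, condition~(i) there demands $\|\Re v_{t,h}\|_2\le V_{t,h}$ for \emph{all} $(t,h)\in\mathcal T$, which your $V_{t,h}^P$ does not satisfy at coarse scales, so the paper's detour through $\|v_{t,h}^{(1)}\|_2$ and the comparison $|V_{t,h}^P-\|v_{t,h}^{(1)}\|_2|=o(V_{t,h}^P)$ is not merely cosmetic.

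There is, however, a genuine gap in your argument. You write that ``the $L^2$-smallness of $\rho_{t,h}$ (together with Gaussian concentration \ldots) allows replacing $\Re v_{t,h}$ by its principal form uniformly.'' An $L^2$ bound controls $\int\rho_{t,h}\,dW$ for a single $(t,h)$, but not the supremum over the (possibly uncountable) index set $B_n$; naive union bounds or concentration do not suffice without entropy control. The paper handles this step \emph{pathwise} via Lemma~\ref{lem.empprocbd}, which converts $\sup_{B_n}|\int\rho_{t,h}\,dW|$ into a single Brownian supremum times $\sup_{B_n}\TV(\langle\cdot\rangle^\alpha\rho_{t,h})$. This forces one to prove TV bounds on the remainder that are \emph{strictly} sharper than the crude $h^{-r-m}$ one inherits from $v_{t,h}$ itself---of order $h^{1-r-m}$ and $h^{\beta_0\wedge(r+m)-r-m}$ for the various pieces---and these are obtained only after a separate decomposition $v_{t,h}^{(1)}-v_{t,h}^P = hK_{t,h}^{m,0}p_{t,h}^{(1)}+hK_{t,h}^{m,0}p_{t,h}^{(2)}+a_P(t)h^{-m}d_{t,h}$ and the dedicated Lemmas~\ref{lem.rthlem}, \ref{lem.dthapprox}, \ref{lem.TVvthPlem}. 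This is where most of the work in the proof lies, and it is missing from your outline.

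Your final ``measurable reselection'' step is actually fine and can be made explicit: since $\mathcal F(\phi_0)$ and $\mathcal F(D_+^\sigma D_-^\tau\phi)$ differ by the unimodular multiplier $\iota_s^{-2(\rho+\mu)}$, the associated Fourier isometry of $L^2(\mathbb R)$ sends $W$ to another two-sided Brownian motion $W'$ with $\int\phi_0((\cdot-t)/h)\,dW=\int D_+^\sigma D_-^\tau\phi((\cdot-t)/h)\,dW'$ identically. The paper sidesteps this by asserting $v_{t,h}^P=Aa_P(t)\,D_+^\sigma D_-^\tau\phi((\cdot-t)/h)$ directly in \eqref{eq.vthPdef}.
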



\medskip

\section{Confidence statements}
\label{sec.conf_statements}

\subsection{Confidence rectangles}

Suppose that Theorem \ref{thm.msc} holds. The distribution of $T_n^\infty(W)$ depends only on known quantities. By ignoring the $o_P(1)$ term on the right hand side of \eqref{eq.mainstate}, we can therefore simulate the distribution of $T_n$. To formulate it differently, the distance between the $(1-\alpha)$-quantiles of $T_n$ and $T_n^\infty(W)$ tends asymptotically to zero, although $T_n^\infty(W)$ does not need to have a weak limit. The $(1-\alpha)$-quantile of $T_n^\infty(W)$ will be denoted by $q_\alpha(T_n^\infty(W))$ in the sequel.

In order to obtain a confidence band one has to control the bias which requires a H\"older condition on $\op(p)f$. However, since we are more interested in a qualitative analysis, it suffices to assume that $\op(p)f$ is continuous (and $f\in H^m$ in order to define the scalar product of $\op(p)f$ properly). Moreover, instead of a moment condition on the kernel $\phi$, we require non-negativity, i.e. for the remaining part of this work, assume that $\phi\geq 0$ and $\int \phi(u) du=1$. Theorem \ref{thm.msc} implies that asymptotically with probability $1-\alpha$, for all $(t,h)\in B_n$,
\begin{align}
  \langle \phi_{t,h}, \op(p)f\rangle 
  &\in \Big[\frac{T_{t,h}-d_{t,h}}{\sqrt{n}}, \frac{T_{t,h}+d_{t,h}}{\sqrt{n}}\Big],
  \label{eq.interval_bounds}
\end{align}
where
\begin{align*}
  d_{t,h}&:= \sqrt{\widehat g_n(t)}\big\|v_{t,h}\big\|_2
  \sqrt{2\log \tfrac \nu h}
  \Big(1+q_\alpha(T_n^\infty(W))\frac{\log \log \tfrac \nu h}{\log \tfrac \nu h}
  \Big).
\end{align*}
Using the continuity of $\op(p)f$, it follows that asymptotically with confidence $1-\alpha$, for all $(t,h)\in B_n$, the graph of $x\mapsto \op(p)f(x)$ has a non-empty intersection with each of the rectangles
\begin{align}
  \big[t, t+h\big]\times \Big[\frac{T_{t,h}-d_{t,h}}{h\sqrt{n}}, \frac{T_{t,h}+d_{t,h}}{h\sqrt{n}}\Big].
  \label{eq.rectangle}
\end{align}
This means we find a solution of $(iii)$ by setting
\begin{align}
  b_-(t,h,\alpha):=\frac{T_{t,h}-d_{t,h}}{h\sqrt{n}}, \quad
  b_+(t,h,\alpha):=\frac{T_{t,h}+d_{t,h}}{h\sqrt{n}}.
  \label{eq.bset}
\end{align}

If instead Theorem \ref{thm.speciallimits} holds, we obtain by similar arguments that asymptotically with confidence $1-\alpha$, for all $(t,h)\in B_n$, the graph of $x\mapsto \op(p)f(x)$ has a non-empty intersection with each of the rectangles
\begin{align}
  \big[t, t+h]\times \Big[\frac{T_{t,h}-d_{t,h}^P}{h\sqrt{n}}, \frac{T_{t,h}+d_{t,h}^P}{h\sqrt{n}}\Big]
  \label{eq.rectangleP}
\end{align}
with 
\begin{align}
  d_{t,h}^P:=\sqrt{\widehat g_n(t)}  |A a_P(t)|h^{1/2-m-r} \big\|D_+^{r+m}\phi\big\|_2
  &\sqrt{2\log \tfrac \nu h} \notag \\
  &\cdot \Big(1+q_\alpha(T_n^{P,\infty}(W))\frac{\log \log \tfrac \nu h}{\log \tfrac \nu h}
  \Big)
  \label{eq.dthPdef}
\end{align}
and $q_\alpha(T_n^{P,\infty}(W))$ the $1-\alpha$-quantile of $T_n^{P,\infty}(W)$. Therefore we find a solution with
\begin{align*}
  b_-(t,h,\alpha):=\frac{T_{t,h}-d_{t,h}^P}{h\sqrt{n}}, \quad
  b_+(t,h,\alpha):=\frac{T_{t,h}+d_{t,h}^P}{h\sqrt{n}}.
\end{align*}

Finally let us mention that instead of rectangles we can also cover $\op(p)f$ by ellipses. Note that in particular a rectangle is an ellipse with respect to the $\|\cdot\|_\infty$ vector norm on $\mathbb{R}^2$, i.e. (up to translation) a set of the form $\{(x_1,x_2): \max(a|x_1|,b|x_2|)=1\}$ for positive $a,b$.

\subsection{Comparison with confidence bands} 

Let us shortly comment on the relation between confidence rectangles and confidence bands, which for density deconvolution were studied by Bissantz et {\it al.} \cite{bis} and  Lounici and Nickl \cite{lou}. Fix one scale $h=h_n$ and consider $B_n=[0,1]\times \{h\}$. For simplicity let us further restrict to the framework of Theorem \ref{thm.msc}. From \eqref{eq.interval_bounds}, we obtain that
\begin{align}
  t\mapsto \Big[\frac{T_{t,h}-d_{t,h}}{h\sqrt{n}}, \frac{T_{t,h}+d_{t,h}}{h\sqrt{n}}\Big]
  \label{eq.a_cb}
\end{align}
is a uniform $(1-\alpha)$-confidence band for the locally averaged function $t\mapsto \tfrac 1h \langle \phi_{t,h}, \op(p)f\rangle$. Restricting to scales on which the stochastic error dominates the bias $|\op(p)f-\tfrac 1h \langle \phi_{t,h}, \op(p)f\rangle|$ (for instance by slightly undersmoothing) we can, inflating \eqref{eq.a_cb} by a small amount, easily construct asymptotic confidence bands for $\op(p)f$ as well. Note that Theorem \ref{thm.msc} does not require that $s^r\mathcal{F}(f_\epsilon)(s)$ converges to a constant and therefore we can construct confidence bands for situations which are not covered within the framework of \cite{bis}. However, the construction of confidence bands described above will not work on scales where we oversmooth or if

\begin{wrapfigure}{r}{0.55\textwidth}
  \vspace{-10pt}
 \begin{centering}
  \includegraphics[scale=0.6]{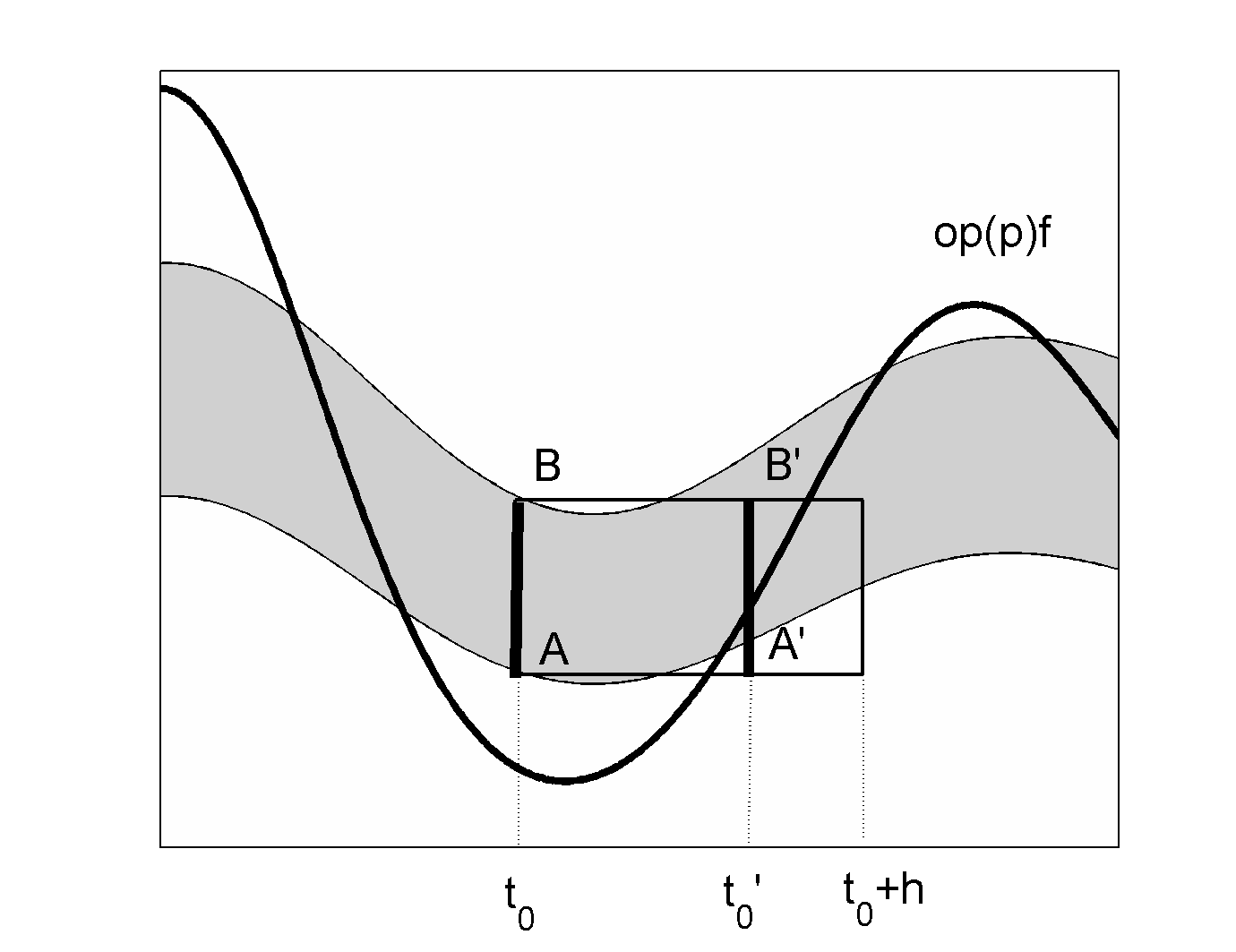}
  \end{centering}
  \vspace{-10pt}
  \caption{Obtaining confidence rectangles from bands.}
  \label{fig.conf_rect_and_conf_bands}
  \vspace{-10pt}
\end{wrapfigure}

\noindent
bias and stochastic error are of the same order. The strength of the multiscale approach lies in the fact that for confidence rectangles all scales can be used simultaneously. This allows for another view on confidence rectangles. Figure \ref{fig.conf_rect_and_conf_bands} displays a band \eqref{eq.a_cb} computed for a large scale/bandwidth which obviously does not cover $\op(p)f$. Now, take a point, $t_0$ say, then \eqref{eq.rectangle} is equivalent to the existence of a point $t_0'\in[t_0,t_0+h]$ such that the confidence interval $[A, B]$ at $t_0$ shifted to $t_0'$ (and denoted by $[A',B']$ in Figure \ref{fig.conf_rect_and_conf_bands}) contains $\op(p)f(t_0')$. Thus, confidence rectangles also account for the uncertainty of $t\mapsto \op(p)f(t)$ along the $t$-axis.

\section{Choice of kernel and theoretical properties of the multiscale statistic}
\label{sec.performance}

In this section, we investigate the size/area of the rectangles constructed in the previous paragraphs. Recall that by \eqref{eq.Etthexp} the expectation of the statistic $T_{t,h}$ depends in general on $\op(p)$. In contrast, Theorem \ref{thm.speciallimits} shows that the variance of $T_{t,h}$ depends asymptotically only on the principal symbol, which acts on $\phi$ as a differentiation operator of order $m+r$. Therefore, the $m+r$-th derivative of $\phi$ appears in the approximating statistic $T_n^{P,\infty}(W)$, but no other derivative does. In fact, we shall see in this section that the scaling property of the confidence rectangles can be compared to the convergence rates appearing in estimation of the $(m+r)$-th derivative of a density.

\subsection{Optimal choice of the kernel} 

In what follows we are going to study the problem of finding an optimal function $\phi$. If $m+r\in \mathbb{N}$ and the confidence statements are formulated via the conclusions of Theorem \ref{thm.speciallimits}, this can be done explicitly.

Note that for given $(t,h)\in B_n$, the width of the rectangle \eqref{eq.rectangleP} is given by $2d_{t,h}^P/(h\sqrt{n})$. Further, the choice of $\phi$ influences the value of $d_{t,h}^P$ in two ways, namely by the factor $\big\|D_+^{r+m}\phi\big\|_2=\big\|D^{r+m}\phi\big\|_2$ as well as the quantile $q_\alpha(T_n^{P,\infty}(W))$ (cf.\ the definition of $d_{t,h}^P$ given in \eqref{eq.dthPdef}). Since $\alpha$ is fixed, we have
\begin{align*}
  q_\alpha(T_n^{P,\infty}(W))\frac{\log \log \tfrac \nu h}{\log \tfrac \nu h}=o(1).
\end{align*}
Therefore, $d_{t,h}^P$ depends in first order on $\big\|D^{r+m}\phi\big\|_2$ and our optimization problem can be reformulated as
\begin{align*}
  \text{minimize} \ \big\|D^{r+m}\phi\big\|_2, \quad \text{subject to} \ \int \phi(u)du=1. 
\end{align*}
This is in fact easy to solve if we additionally assume that $\phi\in H^q$ with $r+m\leq q<r+m+1/2$. By Lagrange calculus, we find that on $(0,1)$, $\phi$ has to be a polynomial of order $2m+2r$. Under the induced boundary conditions $\phi^{(k)}(0)=\phi^{(k)}(1)=0$ for $k=0,\ldots,r+m-1$, the solution $\phi_{m+r}$ is of the form
\begin{align}
  \phi_{m+r}(x)= c_{m+r}x^{m+r}(1-x)^{m+r}\mathbb{I}_{(0,1)}(x).
  \label{eq.phimrdef}
\end{align}
Due to the normalization constraint $\int \phi_{m+r}(u)du=1$, it follows that $\phi_{m+r}$ is the density of a beta distributed random variable with parameters $\alpha=m+r+1$ and $\beta=m+r+1$, implying, $c_{m+r}= (2m+2r+1)!/((m+r)!)^2$. It is  worth mentioning that $\phi_{m+r}^{(m+r)}$, restricted to the domain $[-1,1)$, is (up to translation/scaling) the $(m+r)$-th Legendre polynomial $L_{m+r}$, i.e. $$\phi_{m+r}^{(m+r)}=(-1)^{m+r} \frac{(2m+2r+1)!}{(m+r)!} L_{m+r}(2\cdot-1)$$ (this is essentially Rodrigues' representation, cf.\ Abramowitz and Stegun \cite{abr}, p. 785). For that reason, we even can compute
\begin{align*}
  \big\|\phi_{m+r}^{(m+r)}\big\|_{L^2}= \frac{(2m+2r)!}{(m+r)!} \sqrt{2m+2r+1}.
\end{align*}

In the particular case $r=0, \ m=1$ we obtain $\phi_1^{(1)}(x) \propto 1-2x$. This is known from the work of D\"umbgen and Walther \cite{due1} who considered locally most powerful tests to derive $\phi_1^{(1)}$.

To summarize, we can find the ``optimal'' kernel but it turns out that it has less smoothness than it is required by the conditions for Theorem \ref{thm.speciallimits} due to its behavior on the boundaries $\{0,1\}$. However, if the operator defining the shape constraint and the inversion operator $g\mapsto f$ are both differential operators (for an example see Section \ref{subs.example_Laplace_deconv}), then, the theorems can be proved under weaker assumptions on $\phi$ including as a special case the optimal beta kernels.


\subsection{Theoretical properties of the method}

In this part, we give some theoretical insights. We start by investigating Problem $(iii)$ (cf.\ Section \ref{sec.mono}). After that, we will address issues related to $(ii)$ and $(i)$. It is easy to see that $\|v_{t,h}\|_2\lesssim h^{1/2-m-r}$ and thus, $d_{t,h}$ and $d_{t,h}^P$ are of the same order. We can therefore restrict ourselves in the following to the situation, where the confidence statements are constructed based on the approximation in Theorem \ref{thm.msc}. In the other case, similar results can be derived.

\medskip
 
{\it Problem (iii):} Recall that with confidence $1-\alpha$, for all $(t,h)\in B_n$,
\begin{align*}
  \graph(\op(p)f)\cap \big[t, t+h\big]\times \Big[\frac{T_{t,h}-d_{t,h}}{h\sqrt{n}}, \frac{T_{t,h}+d_{t,h}}{h\sqrt{n}}\Big]\neq \varnothing.
\end{align*}
The so constructed rectangles localize $\op(p)f$, where the amount of information is directly linked to the size of the rectangle. Therefore, it is natural to think of the length of the diagonal as a measure of localization quality. This length behaves like $h \vee h^{-m-r-1/2}n^{-1/2}\sqrt{\log 1/h}$. In particular, if the rectangle is a square, then, $h\sim (\log n/n)^{1/(3+2m+2r)}$ and this coincides with the optimal bandwidth for a kernel density estimator under a Lipschitz assumption on $f$. This is no surprise, of course, since Lipschitz continuity allows a function to oscillate over an interval $I$ by an amount that is proportional to the length $|I|$.  

\medskip

{\it Problem $(ii)$,  $(ii')$:} The following lemma gives a necessary condition in order to solve $(ii)$. Loosely speaking, it states that whenever 
\begin{align*}
  \op(p)f \big |_{[t,t+h]} \gtrsim n^{-1/2}h^{-m-r-1/2}\sqrt{\log 1/h},
\end{align*}
the multiscale test returns a rectangle $[t, t+h]\times [b_-(t,h,\alpha), b_+(t,h,\alpha)]$ which is in the upper half-plane with high-probability. Or, to state it differently, we can reject that $\op(p)f \big |_{[t,t+h]}<0$. 

In order to formulate the next theorem, recall the definition of $b_{\pm}(t,h,\alpha)$ given in \eqref{eq.bset}. Further, set $r_{t,h,n}:= 2d_{t,h}/(h\sqrt{n})$ and denote by $M_n^-$ the set of tupels $(t,h)\in B_n$ for which $\op(p)f \big |_{[t,t+h]}>r_{t,h,n}$. Similarly define $M_n^{+}:=\{(t,h)\in B_n \ | \ \op(p)f  |_{[t,t+h]} < -r_{t,h,n}\}$.
  
\begin{thm}
\label{thm.rates}
Work under the assumptions of Theorem \ref{thm.msc}. If $\phi\geq 0$, then
$$
	\lim_{n\rightarrow \infty}\P\Big((-1)^\mp b_\pm(t,h,\alpha)>0, \text{\ for all \ } (t,h)\in M_n^\pm\Big)\geq 1-\alpha.
$$
\end{thm}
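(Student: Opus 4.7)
The plan is to reduce the probabilistic part of the statement entirely to the uniform confidence statement \eqref{eq.interval_bounds} that was already established, and then argue deterministically using the definition of $M_n^{\pm}$ together with the hypothesis $\phi\ge 0$.

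First I would observe that the event
\begin{equation*}
  \mathcal{E}_n := \big\{\,|T_{t,h}-\E T_{t,h}|\le d_{t,h}\ \text{for all}\ (t,h)\in B_n\,\big\}
\end{equation*}
is precisely a rearrangement of $\{T_n\le q_\alpha(T_n^\infty(W))\}$: dividing by $\sqrt{\widehat g_n(t)}\|v_{t,h}\|_2$ and subtracting $\sqrt{2\log(\nu/h)}$ in the definitions \eqref{eq.Tndef} and \eqref{eq.whdef_first} turns the former bound into the latter, with $d_{t,h}$ exactly as given in Section~\ref{sec.conf_statements}. By Theorem~\ref{thm.msc} and the quantile construction of Section~\ref{sec.conf_statements}, this gives $\liminf_{n\to\infty}\P(\mathcal{E}_n)\ge 1-\alpha$.

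The second step is a pointwise, deterministic estimate on $\mathcal{E}_n$. Using \eqref{eq.Etthexp} together with $\phi\ge 0$, $\int\phi=1$ and $\supp\phi\subset[0,1]$, one writes
\begin{equation*}
  \E T_{t,h}
  = \sqrt{n}\int_{t}^{t+h}\phi\!\Big(\tfrac{s-t}{h}\Big)\Re\big(\op(p)f\big)(s)\,ds .
\end{equation*}
For $(t,h)\in M_n^-$, the strict lower bound $\Re(\op(p)f)|_{[t,t+h]}>r_{t,h,n}$ and non-negativity of $\phi$ yield
\begin{equation*}
  \E T_{t,h}
  > \sqrt{n}\,r_{t,h,n}\int_{t}^{t+h}\phi\!\Big(\tfrac{s-t}{h}\Big)ds
  = \sqrt{n}\,r_{t,h,n}\,h = 2\,d_{t,h},
\end{equation*}
where the last identity is just the definition $r_{t,h,n}=2d_{t,h}/(h\sqrt{n})$. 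Hence on $\mathcal{E}_n$, $T_{t,h}\ge \E T_{t,h}-d_{t,h}>d_{t,h}$, i.e.\ $b_-(t,h,\alpha)>0$. The same argument applied with signs reversed shows that for $(t,h)\in M_n^+$ one has $T_{t,h}<-d_{t,h}$ on $\mathcal{E}_n$, and therefore $b_+(t,h,\alpha)<0$. Intersecting these deterministic implications with the event $\mathcal{E}_n$ and letting $n\to\infty$ delivers the claimed probability bound.

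Structurally, the only non-trivial ingredient is the coupling/quantile calibration from Theorem~\ref{thm.msc}; once this is available, everything reduces to the observation that the threshold $r_{t,h,n}$ was calibrated so that a signal of that magnitude over $[t,t+h]$ doubles the stochastic fluctuation size $d_{t,h}$. I do not anticipate any serious technical obstacles beyond bookkeeping: one has to make sure the confidence event is taken for the same index set $B_n$ appearing in $M_n^{\pm}$, and that the strict inequalities defining $M_n^\pm$ are preserved into the strict inequalities of the conclusion (which they are, because of the slack factor $2$).
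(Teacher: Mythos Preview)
Your proposal is correct and follows essentially the same argument as the paper: condition on the uniform confidence event \eqref{eq.interval_bounds}, then use $\phi\ge0$ and $\int\phi=1$ to turn the pointwise bound $\op(p)f|_{[t,t+h]}>r_{t,h,n}$ into $\E T_{t,h}>2d_{t,h}$, hence $T_{t,h}>d_{t,h}$ and $b_-(t,h,\alpha)>0$. The paper's proof is just a terser version of the same chain of implications.
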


\begin{proof}
For all $(t,h)\in M_n^-$, conditionally on the event given by \eqref{eq.interval_bounds},
\begin{align*}
  \op(p)f \big |_{[t,t+h]} > r_{t,h,n}
  \ &\Rightarrow \ \langle \phi_{t,h}, \op(p)f\rangle > hr_{t,h,n} \\
  \ &\Rightarrow \ T_{t,h}>d_{t,h} \ \Rightarrow \ b_-(t,h,\alpha)>0.
\end{align*}
One can argue similarly for $M_n^+$.
\end{proof}

Define
\begin{align}
 C_\alpha:=\big(\sqrt{8\|f_\epsilon\|_{\infty}}h^{m+r-1/2}\|v_{t,h}\|_{2}(1+q_\alpha(T_n^\infty(W)))\big)^{2/(2m+2r+1)}
  \label{eq.Cdef}
\end{align}
and let $\widetilde M^\pm$ be the set of tupels $(t,h) \in B_n$ satisfying the pair of constraints
\begin{align*}
  h\geq C_\alpha\left(\frac{\log n}n\right)^{1/(2\beta+2m+2r+1)}
\end{align*}
and 
\begin{align}
  \op(p)f \big |_{[t,t+h]} \lessgtr \left(\frac{\log n}n\right)^{\beta/(2\beta+2m+2r+1)}
  \label{eq.Af>}
\end{align}
(with $>$ in the last equality corresponding to $\widetilde M_n^-$ and $<$ to $\widetilde M_n^+$).

\begin{cor}
\label{cor.roc}
Work under the assumptions of Theorem \ref{thm.msc}. If $\phi\geq 0$ and $\beta\in \mathbb{R}$, then
$$
	\lim_{n\rightarrow \infty}\P\Big((-1)^\mp b_\pm(t,h,\alpha)>0,
	\text{\ for all \ } (t,h)\in \widetilde M_n^\pm\Big)\geq 1-\alpha.
$$
\end{cor}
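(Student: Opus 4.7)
The plan is to show that, with probability tending to one, $\widetilde M_n^\pm \subseteq M_n^\pm$, where $M_n^\pm$ are the sets introduced before Theorem~\ref{thm.rates}, and then invoke that theorem directly. Since Theorem~\ref{thm.rates} already yields the conclusion on $M_n^\pm$ with asymptotic probability at least $1-\alpha$, the corollary will follow.

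The key analytic step is to control the radius $r_{t,h,n}=2d_{t,h}/(h\sqrt n)$ uniformly over the enlarged index set. First, \eqref{eq.widehatgdef} together with the convolution inequality $\|g\|_\infty\leq\|f_\epsilon\|_\infty$ (using $g=f\ast f_\epsilon$ and $\|f\|_1=1$) gives $\sqrt{\widehat g_n(t)}\leq\sqrt{\|f_\epsilon\|_\infty}(1+o_P(1))$ uniformly in $t$. Second, since $\log\log(\nu/h)/\log(\nu/h)\to 0$ as $h\to 0$, the correction factor in $d_{t,h}$ is eventually bounded by $1+q_\alpha(T_n^\infty(W))$ uniformly in $(t,h)\in B_n$. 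Combining these with the remark preceding the corollary that $\|v_{t,h}\|_2\lesssim h^{1/2-m-r}$, and setting $K_1:=\sup_{(t,h)\in\mT}h^{m+r-1/2}\|v_{t,h}\|_2<\infty$, one obtains
\begin{align*}
r_{t,h,n}\leq (1+o_P(1))\sqrt{8\|f_\epsilon\|_\infty}\,K_1\,\bigl(1+q_\alpha(T_n^\infty(W))\bigr)\,h^{-1/2-m-r}\sqrt{\log(\nu/h)/n}.
\end{align*}

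Next I would substitute the lower bound $h\geq C_\alpha(\log n/n)^{1/s}$ with $s:=2\beta+2m+2r+1$. This yields $h^{-1/2-m-r}\leq C_\alpha^{-(2m+2r+1)/2}(n/\log n)^{(1/2+m+r)/s}$, while $\sqrt{\log(\nu/h)/n}\lesssim\sqrt{\log n/n}$ uniformly for $h\geq l_n$. Using the algebraic identity $\tfrac12-\tfrac{1/2+m+r}{s}=\tfrac{\beta}{s}$, the estimate collapses to
\begin{align*}
r_{t,h,n}\leq (1+o_P(1))\,C_\alpha^{-(2m+2r+1)/2}\sqrt{8\|f_\epsilon\|_\infty}\,K_1\,\bigl(1+q_\alpha(T_n^\infty(W))\bigr)\bigl(\log n/n\bigr)^{\beta/s}.
\end{align*}
By the definition \eqref{eq.Cdef} of $C_\alpha$, interpreted so that the factor $h^{m+r-1/2}\|v_{t,h}\|_2$ is replaced by its uniform upper bound $K_1$, the leading constant is at most $1+o_P(1)$. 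Hence $r_{t,h,n}\leq(\log n/n)^{\beta/s}$ on an event of probability tending to one. On this event, the constraint \eqref{eq.Af>} defining $\widetilde M_n^-$ (resp.\ $\widetilde M_n^+$) forces $\op(p)f|_{[t,t+h]}>r_{t,h,n}$ (resp.\ $<-r_{t,h,n}$), so $\widetilde M_n^\pm\subseteq M_n^\pm$. Theorem~\ref{thm.rates} then closes the argument.

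The main obstacle I anticipate is bookkeeping: matching the prefactor $\sqrt{8\|f_\epsilon\|_\infty}$ appearing in \eqref{eq.Cdef} with the chain of constants produced by the estimates, verifying that the $o_P(1)$ terms are genuinely uniform in $(t,h)\in B_n$, and propagating the strict inequality defining $\widetilde M_n^\pm$ through to the strict inequality required by $M_n^\pm$ despite the $o_P(1)$ slack in the upper bound for $r_{t,h,n}$.
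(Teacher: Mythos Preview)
Your approach is essentially identical to the paper's: bound $r_{t,h,n}$ to show $\widetilde M_n^\pm\subseteq M_n^\pm$, then invoke Theorem~\ref{thm.rates}. The paper's proof is terser in two respects. First, it writes the bound $d_{t,h}\leq\sqrt{\|f_\epsilon\|_\infty}\,\|v_{t,h}\|_2\sqrt{2\log(\nu/h)}\bigl(1+q_\alpha(T_n^\infty(W))\bigr)$ directly, without tracking the $o_P(1)$ correction you introduce for $\widehat g_n$. Second, it keeps the $(t,h)$-dependent factor $h^{m+r-1/2}\|v_{t,h}\|_2$ inside $C_\alpha$ as defined in~\eqref{eq.Cdef} rather than passing to your uniform bound $K_1$; since $C_\alpha$ is calibrated exactly to this expression, substituting $h\geq C_\alpha(\log n/n)^{1/s}$ and $\log(\nu/h)\leq\log n$ (from $h\geq l_n\geq\nu/n$) yields $r_{t,h,n}\leq(\log n/n)^{\beta/s}$ with no slack, so the strict-inequality concern you flag does not arise.
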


\begin{proof}
It holds that
\begin{align*}
  d_{t,h}\leq \left\|f_\epsilon\right\|_\infty^{1/2}\big\|v_{t,h}\big\|_2 \sqrt{2\log \nu/h}\big(1+q_\alpha(T_n^\infty(W))\big).
\end{align*}
For sufficiently large $n$, $h\geq l_n\geq  \nu/n$. Therefore we have for every $(t,h)\in \widetilde M_n^-$,
\begin{align*}
  r_{t,h,n}\leq \sqrt{8\left\|f_\epsilon\right\|_\infty}\big\|v_{t,h}\big\|_2 \big(1+q_\alpha(T_n^\infty(W))\big)h^{-1/2}n^{-1/2}\sqrt{\log n}< \op(p)f \big |_{[t,t+h]}.
\end{align*}
Similar for $\widetilde M_n^+$. Since $\widetilde M_n^\pm \subset M_n^\pm$, the result follows directly from Theorem \ref{thm.rates}. 
\end{proof}

 Roughly speaking, the last result shows that if $h\sim (\log n/n)^{1/(2\beta+2m+2r+1)}$ and $\op(p)f \big |_{[t,t+h]} \sim (\log n/n)^{\beta/(2\beta+2m+2r+1)}=h^\beta$, then with probability $1-\alpha$, our method returns a rectangle in the upper half-plane. We have three distinct regimes
\begin{align*}
 \begin{array}{lll}
  \beta>0: \ & \op(p)f \big |_{[t,t+h]} \rightarrow 0, \\
  \beta=0: \ & \op(p)f \big |_{[t,t+h]} =O(1), \\
  -m-r-\tfrac 12<\beta<0: \ & \op(p)f \big |_{[t,t+h]} \rightarrow \infty.
 \end{array}
\end{align*}
It is insightful to compare the previous result to derivative estimation of a density if $m+r$ is a positive integer. As it is well known, $D^{m+r}f$ can be estimated with rate of convergence $$\Big(\frac{\log n}n\Big)^{\beta/(2\beta+2m+2r+1)}$$ under $L^\infty$-risk assuming that $\op(p)f$ is H\"older continuous with index $\beta>0$ and $h\sim (\log n/n)^{1/(2\beta+2m+2r+1)}$. This directly relates to the first case considered above.

\medskip
 
{\it Problem $(i)$:} At the beginning of Section \ref{sec.mono} we shortly addressed construction of confidence statements for the number of roots and their location. Note that estimators derived in this way, have many interesting features. On the one hand, we know that with probability $1-\alpha$ the estimated number of roots is a lower bound for the true number of roots. Therefore, these estimates do not come from a trade-off between bias and variance but they allow for a clear control on the probability to observe artefacts. In order to show that the lower bound for the number of roots is not trivial, we need to prove that whenever two roots are well-separated (for instance the distance between them shrinks not too fast), they will be detected eventually by our test. This property follows if we can show that the simultaneous confidence intervals for a fixed number of roots, say, shrink to zero.

Therefore, assume for simplicity that the number $K$ and the locations $(x_{0,j})_{j=1,\ldots,K}$ of the zeros of $\op(p)f$ are fixed (but unknown) and $x_{0,j}\in (0,1)$ for $j=1,\ldots,K$. For example, these roots can be extreme/saddle points if $\op(p)=D$ or points of inflection if $\op(p)=D^2$.

In order to formulate the result, we need that $B_n$ is sufficiently rich. Therefore, we assume that for all $n$, there exists a sequence $(N_n), \ N_n\gtrsim n^{1/(2m+2r+1)}\log^4 n $, such that
\begin{align*}
  \Big\{\Big(\frac k{N_n}, \frac l{N_n} \Big) \ \big | \ k=0,1,\ldots, \ l=1,2,\ldots, \  k+l\leq N_n\Big\} \subset B_n.
\end{align*}
Assume further that in a neighborhood of the roots $x_{0,j}$, $\op(p)f$ behaves  like
\begin{align*}
  \op(p)f(x)= \gamma \sign(x-x_{0,j})|x-x_{0,j}|^\beta + o(|x-x_{0,j}|^\beta),
\end{align*}
for some positive $\beta \in (0,1]$. Let $\rho_n=(\log n/n)^{1/(2\beta+2m+2r+1)}2/\gamma^{1/\beta}$ and $C_\alpha, M_n^\pm$ as defined in Corollary \ref{cor.roc}. There exist integer sequences $(k^-_{j,n})_{j,n}$, $(k^+_{j,n})_{j,n}$, $(l_{n})_{n}$ such that for all sufficiently large $n$,
$$
	\rho_n\leq \frac {k^-_{j,n}}{N_n}-x_{0,j}
	\leq 2\rho_n, \quad
	-2\rho_n\leq \frac{k^+_{j,n}}{N_n}-x_{0,j}\leq -\rho_n,
$$
and
$$
	C_\alpha\gamma^{1/\beta}\rho_n\leq \frac  {l_n}{N_n}\leq 2C_\alpha\gamma^{1/\beta}\rho_n.
$$
Direct calculations show $(k_{j,n}^-/N_n,l_n/N_n)\in M_n^-$ and $((k_{j,n}^+-l_n)/N_n,l_n/N_n)\in M_n^+$  for $j=1,\ldots,K$. We can conclude from Corollary \ref{cor.roc} and the construction that for $j=1,\ldots,K$, the confidence intervals have to be a subinterval of
\begin{align*}
  \left[\frac{k^+_{j,n}-l_n}{N_n}, \frac{k_{j,n}^-+l_n}{N_n}\right].
\end{align*}
Hence, the length for each confidence interval is bounded from above by
\begin{align*}
  4(C_\alpha \gamma^{1/\beta}+1)\rho_n 
  \sim
  \left(\frac {\log n}n\right)^{1/(2\beta+2m+2r+1)}.
\end{align*}
As $n\rightarrow\infty$ the confidence intervals shrink to zero, and will therefore become disjoint eventually. This shows that our estimator for the number of roots picks asymptotically the correct number with high probability. Observe, that for localization of modes in density estimation $(m,r,\beta)=(1,0,1)$ the rate $(\log n/n)^{1/5}$ is indeed optimal up to the log-factor (cf.\ Hasminskii \cite{has}). The rate $(\log n/n)^{1/7}$ for localization of inflection points in density estimation $(m,r,\beta)=(2,0,1)$ coincides with the one found in Davis {\it et al.}\ \cite{dav}.

For the special case of mode estimation in density deconvolution (here: $(m,r,\beta)=(1,r,1)$) let us shortly comment on related work by Rachdi and Sabre \cite{rac} and Wieczorek \cite{wie}. In \cite{wie} optimal estimation of the mode under relatively restrictive conditions on the smoothness of $f$ is considered. In contrast, Rachdi and Sabre find the same rates of convergence $n^{-1/(2r+5)}$ (but with respect to the mean-square error). Under the stronger assumption that $D^3 f$ exists they also provide confidence bands which converge at a different rate, of course.

\subsection{On calibration of multiscale statistics}
\label{subsec.cali}

Let us shortly comment on the type of multiscale statistic, derived in Theorems \ref{thm.gmsc}-\ref{thm.speciallimits}. Following \cite{due2}, p.139, we can view the calibration of the multiscale statistics \eqref{eq.defgenmsc}, (\ref{eq.Tndef}), and (\ref{eq.TnPdef}) as a generalization of L\'evy's modulus of continuity. In fact, the supremum is attained uniformly over different scales, making this calibration in particular attractive for construction of adaptive methods.

One of the restrictions of our method, compared to other works on multiscale statistics, is that we exclude the coarsest scales, i.e. $h> u_n=o(1)$ (cf.\ Theorem \ref{thm.gmsc}). Otherwise the approximating statistic would not be distribution-free. However, excluding the coarsest scales is a very weak restriction since the important features of $\op(p)f$ can be already detected at scales tending to zero with a certain rate. For instance in view of Corollary \ref{cor.roc}, the multiscale method detects a deviation from zero, i.e. $\op(p)f \big |_I\geq C>0$, provided the length of the interval $I$ is larger than $\mathrm{const.}\times (\log n/n)^{1/(2m+2r+1)}$. This can be also seen by numerical simulations, as outlined in the next section.

\section{Numerical simulations}
\label{sec.numsim}

\begin{figure}
\begin{center}
  \includegraphics[scale=0.6]{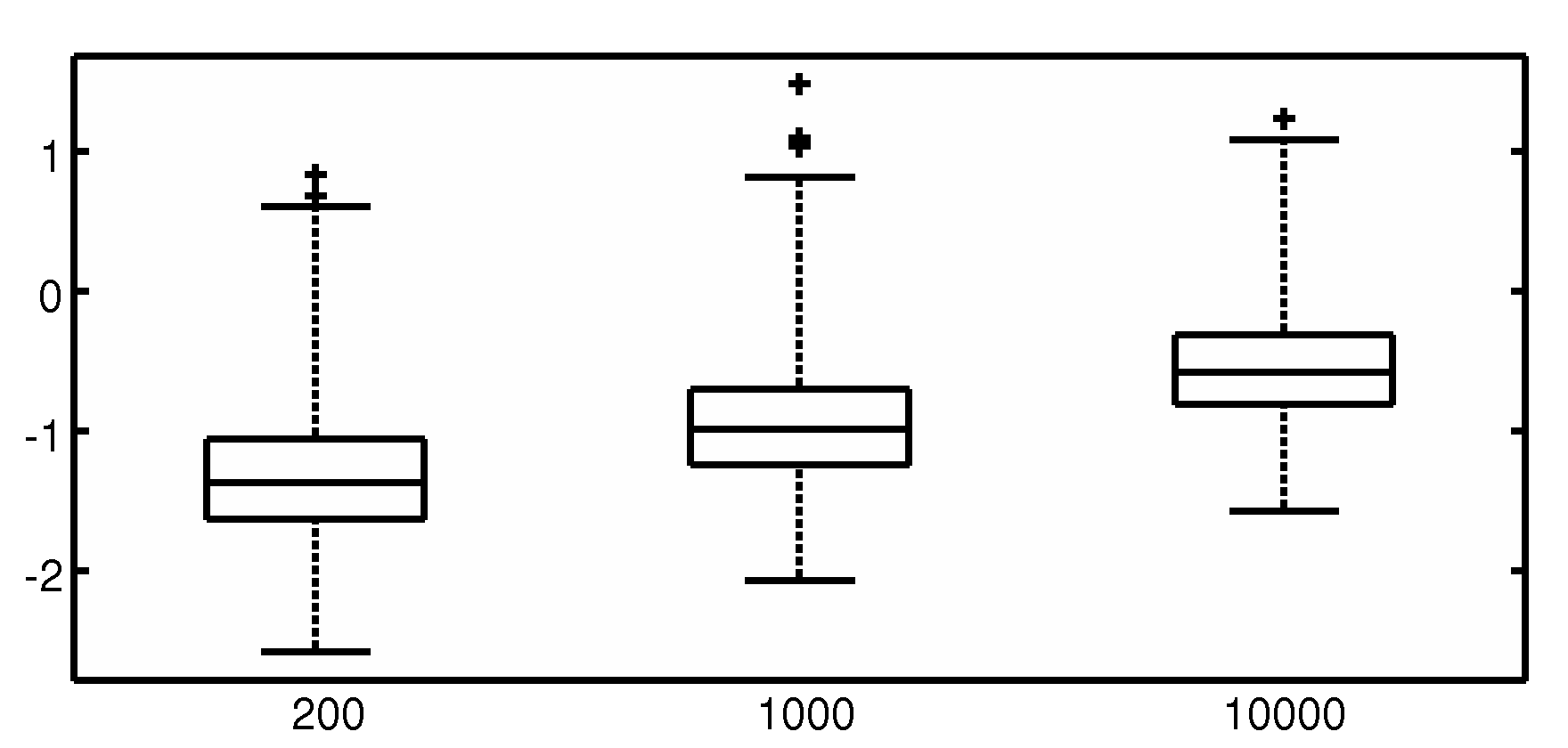}
\caption{Boxplots for three different values ($n=200, \ n=1000, \ n=10.000$) of the approximating statistic \eqref{eq.TnPinfLaplace}.}
\label{fig.box_plot}
\end{center}
\end{figure} 
 
In this section we provide further simulation results and discussion to the example from Section \ref{subs.example_Laplace_deconv} (cf. also Example \ref{exam.A=D}, Section \ref{sec.mono}), that is, studying monotonicity of the density $f$ under Laplace-deconvolution. More precisely, the error density is $f_\epsilon(x)=\theta^{-1}e^{-|x|/\theta}$ with $\theta=0.075$. In this case,
\begin{align*}
  \mathcal{F}(f_\epsilon)(t)= \langle \theta t\rangle^{-2} \quad \text{and} \quad \op(p)^\star f=-Df.
\end{align*}
One should notice that for Laplace deconvolution the inversion operator, mapping $g$ to $f$, is given by $1-\theta^2D^2$ and therefore the statistic \eqref{eq.Tthdef} takes the 
simple form \eqref{eq.def_Tth_Lapl_mono} (cf. also the discussion following Theorem \ref{thm.msc}). The ill-posedness of the shape constraint and the deconvolution problem give $m=1$, $r=2.$ Together with \eqref{eq.phimrdef} it is therefore natural to choose $\phi$ as the density of a $\Beta(4,4)$ random variable. Further, recall that $u_n=1/\log\log n$, $N_n=[n^{3/5}]$, and
\begin{align*}
  B_n=\Big\{\Big(\frac k{N_n}, \frac l{N_n} \Big) \ \big | \ k=0,1,\ldots, \ l=1,2,\ldots,[N_nu_n], \  k+l\leq N_n\Big\}. 
\end{align*}
Note that Assumptions \ref{as.Ffepsilonapprox} and \ref{as.prinsymb} hold for $(A,\rho,r,\beta_0)=(\theta^2,0,2,2)$ and $(\mu,m)=(1,1)$, respectively. Thus, we might work in the framework of Theorem \ref{thm.speciallimits}. The multiscale statistics
\begin{align*}
  T_n^P
  =
  \sup_{(t,h)\in B_n} w_h\left(\frac{|T_{t,h}-\E T_{t,h}|}{\sqrt{\widehat g_n(t)} \ \theta^2 \ \|\phi^{(3)}\|_2} - \sqrt{2\log\big(\tfrac{\nu}h\big)}\right)
\end{align*}
and
\begin{align}
  T_n^{P,\infty}(W)
  =
  \sup_{(t,h)\in B_n} w_h\left(\frac{\big|\int \phi^{(3)}\big(\tfrac{s-t}h\big)dW_s\big|}{\sqrt{h} \ \|\phi^{(3)}\|_2}-\sqrt{2\log\big(\tfrac {\nu}h\big)}\right)
  \label{eq.TnPinfLaplace}
\end{align}
have a particular simple form as well and the rectangles in \eqref{eq.rectangleP} can be computed via
\begin{align}
  d_{t,h}^P=h^{-5/2} \sqrt{\widehat g_n(t)} \theta^2 \|\phi^{(3)}\|_2 \sqrt{2\log \tfrac \nu h}\big(1+q_{\alpha,n} \tfrac {\log \log \tfrac \nu h}{\log \tfrac \nu h}\big).
  \label{eq.dthP_Laplace_explicit}
\end{align}
Boxplots for the distributions $T_{200}^{P,\infty}(W)$, $T_{1000}^{P,\infty}(W)$ and $T_{10.000}^{P,\infty}(W)$ are displayed in Figure \ref{fig.box_plot} based on $10.000$ repetitions each. The plot shows that the distribution is well-concentrated with a few outliers only. Although our theoretical results imply boundedness of the multiscale statistic as $n\rightarrow \infty$, Figure \ref{fig.box_plot} indicates that if $n$ is in the range of a few thousands $T_n^{P,\infty}(W)$ increases slowly.

\begin{figure}[hp]
\begin{center}
  \includegraphics[scale=0.8]{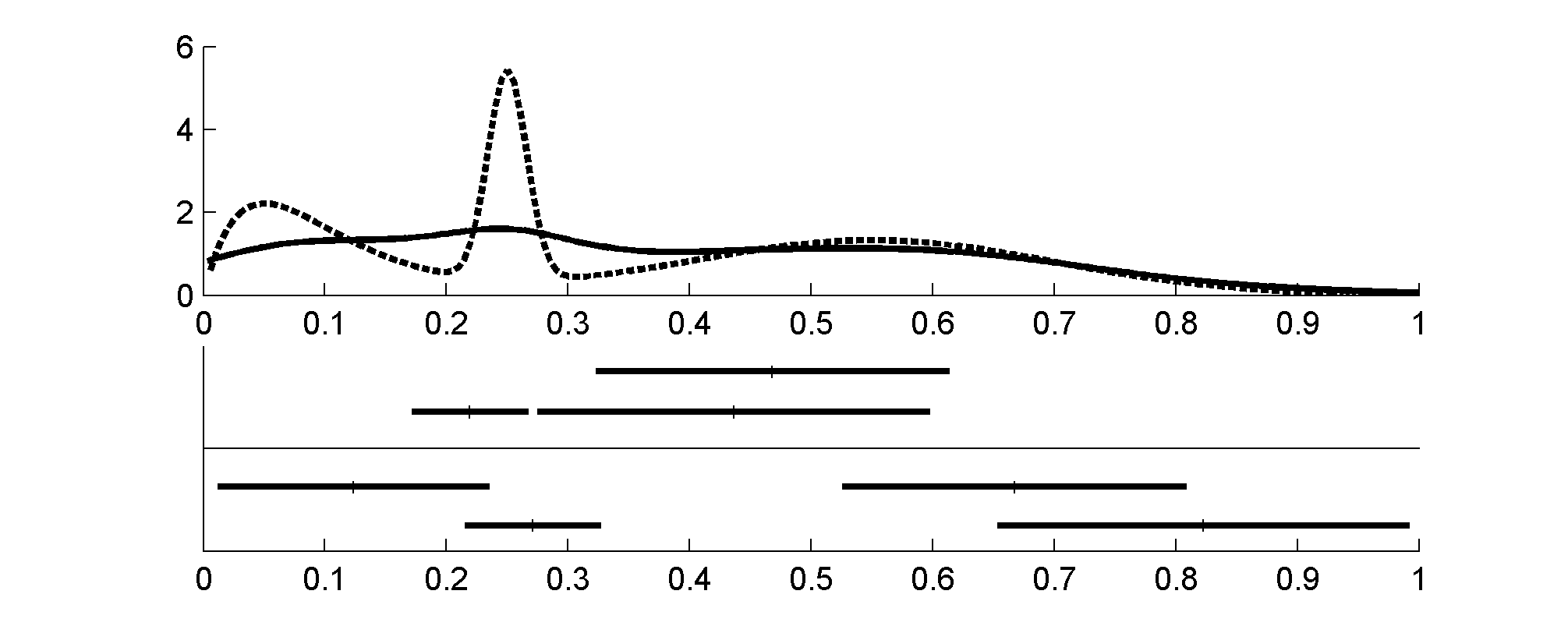}
\caption{Simulation for sample size $n=10.000$ and $90\%$-quantile. {\it Upper display:} True density $f$ (dashed) and convoluted density $g$ (solid). {\it Lower display:} Subset of minimal solutions to $(ii)$ and $(ii')$ (horizontal lines above/below the thin line)}
\label{fig.truedens}

    \includegraphics[scale=0.8]{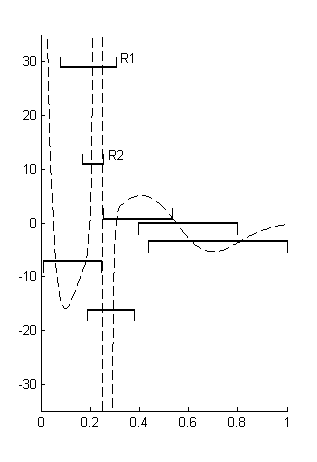}

  \vspace{-10pt}
  \caption{True (unobserved) derivative $f'$ (dashed) and confidence statements for the level of $f'$. Computed for the same data set as in Figure \ref{fig.truedens}.}
  \label{fig.reconstr1000}

\end{center}
\end{figure} 

In Section \ref{subs.example_Laplace_deconv} we showed confidence statements for a simulated sample of size $n=2000$. To complement our study, let us now investigate the case of large $n$, i.e. $n=10.000$. Again we choose the confidence level equal to $90\%$. The estimated quantile is $q_{0.1}(T_{10.000}^{P,\infty}(W))=-0.04$. For all simulations, we use $\nu=\exp(e^2)$ because then, $h\mapsto \sqrt{\log \nu/h}/(\log\log \nu/h)$ is monotone as long as $0< h\leq 1$ (cf.\ Lemma \ref{lem.delta1_tech} (i)). The density $f$ has been designed in order to investigate Corollary \ref{cor.roc} numerically. Indeed, on $[0, 0.35]$ the signal $|f'|$ is large on average, but the intervals on which $f$ increases/decreases are comparably small. By way of contrast, on $[0.35, 1]$ the signal $|f'|$ is small and there is only one increase/decrease.

The test is able to find all increases and decreases of $f$ besides the increase on $[0,0.04]$, which is not detected (cf. Figure \ref{fig.truedens}). In contrast to the simulation in Figure \ref{fig.intro}, we see now a much better localization of the sharp increase/decrease on $[0.2,0.25]$ and $[0.25,0.3]$.

With the confidence rectangles at hand, we are able to say more about $f$ than localizing regions of increase/decrease only. In fact, we also can provide some confidence statements about the value of $f'$ close to a given point. Instead of plotting all confidence statements, we have displayed in Figure \ref{fig.reconstr1000} the most prominent ones, allowing for a good characterization of the derivative $f'$ and telling us something about the strength of the increases/decreases of $f$.

\noindent
A bracket of type  ``$\sqcup$'' means that $f'$ has to be above the horizontal line, somewhere. To give an example, from the bracket $R1$ we can conclude that at least on a subset of $[0.07, 0.3]$, the derivative $f'$ exceeds $29$. Similarly,  ``$\sqcap$'' means that somewhere $f'$ has to be below the corresponding horizontal line. As always, these statements hold simultaneously with confidence $90\%$.


What we find is that in regions where the derivative does not oscillate much, we can achieve rather precise confidence statements about the value of $f'$. For example, from the rightmost bracket we can infer that with confidence $90\%$ the minimum of $f'$ on $[0.45, 1]$ has to be below $-4$, coming close to the true minimum, which is approximately $-6$.

Figure \ref{fig.reconstr1000} also shows nicely why a multiscale approach can provide additional insight compared to a one-scale method. Consider $R1$ and $R2$ in Figure \ref{fig.reconstr1000} and denote by $(t_1,h_1)$ and $(t_2,h_2)$ the corresponding indices in $B_n$ (as in \eqref{eq.rectangleP}). Note that $R1$ and $R2$ belong to similar time points in the sense that $R2 \subset R1$ but different bandwidths $h_1, h_2$. Therefore we may view $R1$ and $R2$ as a superposition of confidence statements on different scales. This allows to infer different qualitative and quantitative statements close to the same time point. We would use $R2$ in order to detect and localize an increase (as in Figure \ref{fig.truedens}) or to construct a confidence band for a mode, whereas from $R1$ we obtain a better lower bound for $\sup f'$. Thus, for a qualitative analysis there is a real gain by taking into account all scales simultaneously.

\section{Outlook and Discussion}

Given a density deconvolution model, we have investigated multiscale methods in order to analyze qualitative features of the unknown density which can be expressed as pseudo-differential operator inequalities. Compared to previous work, a more refined multiscale calibration has been considered using an idea of proof based on KMT results together with tools from the theory of pseudo-differential operators. We believe that the same strategy can be applied to a variety of other problems. In particular, it is to be expected that similar results will hold for regression and spectral density estimation.

In the formulation of the problem but also in the proofs it becomes apparent that modern tools from functional and harmonic analysis such as pseudo-differential operators are very helpful and to a certain extent unavoidable. In the same spirit, very recently, Nickl and Rei\ss \ \cite{nic} as well as S\"ohl and Trabs \cite{soe} used singular integral theory in order to prove Donsker theorems in deconvolution-type models. It is expected that reconsidering deconvolution theory from the viewpoint of harmonic analysis will lead to an improved understanding of the field.

Our multiscale approach allows us to identify intervals such that for given significance level we know that $\op(p)f>0$ at least on a subinterval. As outlined in Section \ref{sec.performance}, these results allow for qualitative inference as for example construction of confidence bands for the roots of $\op(p)f$. Since we only required that $\op(p)f$ is continuous, $\op(p)f$ can be highly oscillating. In this framework, it is therefore impossible to obtain strong confidence statements in the sense that we find intervals on which $\op(p)f$ is always positive. By adding bias controlling smoothness assumptions such as for instance H\"older conditions stronger results can be obtained resulting for instance in uniform confidence bands.

Obtaining multiscale results for error distributions as in Assumption \ref{ass.noise} is already a very difficult topic on its own and extension to the severely ill-posed case, including Gaussian deconvolution, becomes technically challenging since the theory of pseudo-differential operators has to the best of our knowledge not been formulated on the induced function spaces so far. Therefore we intend to treat this in a subsequent paper.

%
%
%

Restricting to shape constraint which are associated with pseudo-differential operators appears to be a limitation of our method, since important shape constraints as for instance curvature cannot be handled within this framework and we may only work with linearizations (which is quite common in physics and engineering). Allowing for non-linearity is a very challenging task for further investigations. We are further aware of the fact that many other important qualitative features are related to integral transforms (that are in general not of convolution type) and they do not have a representation as pseudo-differential operator. For instance complete monotonicity and positive definiteness are by Bernstein's and Bochner's Theorem connected to the Laplace transform and Fourier transform, respectively. They cannot be handled with the methods proposed here and are subject to further research.


\bigskip

{\bf Acknowledgments.}
This research was supported by the joint research grant FOR 916 of the German Science Foundation (DFG) and the Swiss National Science Foundation (SNF). The first author was partly funded by DFG postdoctoral fellowship SCHM 2807/1-1. The second author would like to acknowledge support by DFG grants CRC 755 and CRC 803.  The authors are grateful for very helpful comments by Steve Marron, Markus Rei\ss \ , Jakob S\"ohl, Mathias Trabs, and G\"unther Walther as well as two referees and an associate editor which led to a more general version of previous results.


\begin{appendices}

\section{Proofs of the main theorems}
\label{eq.secproofofmsc}
Throughout the appendix, let
\begin{align*}
  w_h=\frac{\sqrt{\tfrac 12\log \tfrac \nu h}}{\log\log\tfrac \nu h},
  \quad \widetilde w_h=\frac{\log \tfrac \nu h}{\log\log \tfrac \nu h}.
\end{align*}
Furthermore, we often use the normalized differential $\rqm \xi:=(2\pi)^{-1} d\xi$ 


\begin{proof}[Proof of Theorem \ref{thm.gmsc}] 
In a first step we study convergence of the statistic
\begin{align*}
  T_n^{(1)}=\sup_{(t,h)\in B_n}
  w_h\frac{\big|T_{t,h}-\E T_{t,h}\big|}{V_{t,h} \ \sqrt{g(t)}}-\widetilde w_h.
\end{align*}
Note that $T_n^{(1)}$ is the same as $T_n$, but $\widehat g_n$ is replaced by the true density $g$. We show that there exists a (two-sided) Brownian motion $W$, such that with
\begin{align*}
  T_n^{(2)}(W):=\sup_{(t,h)\in B_n} w_h\frac{\big| \int \psi_{t,h}(s) \sqrt{g(s)} dW_s\big|}{V_{t,h} \ \sqrt{g(t)}}-\widetilde w_h,
\end{align*}
we have
\begin{align}
  \sup_{G\in \mathcal{G}_{c,C,q}} \big |T_n^{(1)}-T_n^{(2)}(W)\big|=o_P(r_n).
  \label{eq.Tn1Tn2}
\end{align}
The main argument is based on the standard version of KMT (cf.\ \cite{KMT}). This is a fairly classical result, but has never been used to describe the asymptotic distribution of a multiscale statistic, the only exception being Walther \cite{wal}. In order to state the result, let us define a Brownian bridge on the index set $[0,1]$ as a centered Gaussian process $(B(f))_{\{f\in \mathcal{F}\}}, \ \mathcal{F}\subset L^2([0,1])$ with covariance structure
\begin{align*}
  \Cov\big(B(f),B(g)\big)= \langle f,g\rangle -\langle f,1 \rangle \langle g,1 \rangle.
\end{align*}
For $\mathcal{F}_0:=\{x\mapsto\mathbb{I}_{[0,s]}(x): s\in [0,1]\},$ the process $(B(f))_{\{f\in \mathcal{F}_0\}}$ coincides with the classical definition of a  Brownian bridge. If $U_i\sim \mathcal{U}[0,1]$, i.i.d., the uniform empirical process on the function class $\mathcal{F}$ is defined as
\begin{align*}
  \U_n(f)= \sqrt{n} \ \Big(\frac 1n \sum_{i=1}^n f(U_i)- \int f(x) dx \Big), \quad f\in \mathcal{F}.  
\end{align*}
In particular
\begin{align*}
  T_{t,h}-\E \, T_{t,h} = \U_n\big(\psi_{t,h} \circ G^{-1}\big),
\end{align*}
where $G^{-1}$ denotes the quantile function of $Y$. For convenience, we restate the celebrated KMT inequality for the uniform empirical process. 

\begin{thm}[{KMT on $[0,1]$}, cf.\ \cite{KMT}]
\label{thm.KMT01}
There exist versions of $\U_n$ and a Brownian bridge $B$ such that for all $x$
\begin{align*}
  \P\Big(\sup_{f\in \mathcal{F}_0} \big|\U_n(f)-B(f)\big|> n^{-1/2} (x+C\log n)\Big)<Ke^{-\lambda x},
\end{align*}
where $C, K, \lambda>0$ are universal constants.
\end{thm}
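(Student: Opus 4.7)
The plan is to follow the classical Hungarian construction of Koml\'os, Major and Tusn\'ady. Since $\mathcal{F}_0$ consists of indicators of left-intervals, I would first observe that
$\sup_{f \in \mathcal{F}_0} |\U_n(f) - B(f)| = \sup_{s \in [0,1]} |\sqrt{n}(F_n(s) - s) - B(s)|$,
where $F_n$ is the uniform empirical distribution function. The task is therefore to build the $U_i$ and a Brownian bridge $B$ on a common probability space so that this supremum is of order $n^{-1/2}\log n$ with the prescribed exponential tails.

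The key ingredient would be Tusn\'ady's binomial coupling lemma: for $N \sim \mathrm{Binomial}(m, 1/2)$ there exists a joint realization with a standard normal $Z$ obeying $|2N - m - \sqrt{m}\, Z| \le a + b Z^{2}/\sqrt{m}$ for absolute constants $a, b$. I would use this via a dyadic refinement scheme on $[0,1]$. At level $j$ partition $[0,1]$ into intervals $I_{j,k} = [(k-1)2^{-j}, k 2^{-j}]$ and let $N_{j,k}$ be the number of $U_i$ falling in $I_{j,k}$. Conditional on $N_{j,k}$, the count in the left child interval is $\mathrm{Binomial}(N_{j,k}, 1/2)$; applying Tusn\'ady's lemma at each split couples the whole tree of empirical counts with a matching tree of Gaussians that is the discretization of a Brownian bridge $B$ on the dyadic grid.

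Descending to the finest dyadic level $j^{*} \sim \log_{2} n$, I would bound the supremum of $\sqrt{n}(F_n(s) - s) - B(s)$ over $s \in [0,1]$ by (i) the supremum of the coupled deviations at the $2^{j^*}$ dyadic points, plus (ii) an $O(n^{-1/2})$ modulus-of-continuity correction coming from oscillations of $F_n$ and $B$ within the finest intervals. Term (i) is controlled by summing the per-level Tusn\'ady errors: each of the $\log_2 n$ levels contributes an error whose exponential moment is bounded uniformly, and a level-by-level union bound converts the sum of $\log n$ independent sub-Gaussian errors into a single tail of the form $K \exp(-\lambda x)$ at threshold $x + C \log n$.

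The main obstacle is establishing Tusn\'ady's lemma with the sharp quadratic-in-$Z$ error term and then controlling the telescoping sum over $\log_2 n$ dyadic levels so that the accumulated error stays at the sharp $\log n$ rate rather than a power of $\log n$. Proving the lemma itself requires a delicate comparison of binomial and normal quantiles via Stirling expansions, and the telescope requires tracking the $Z^{2}/\sqrt{m}$ terms carefully, since naive $L^{1}$ bookkeeping loses a $\log n$ factor. Because the paper only needs the result as a black box, I would ultimately import Theorem \ref{thm.KMT01} from the original Hungarian construction and focus the downstream work on plugging this coupling into the localized empirical-process estimates underlying Theorem \ref{thm.gmsc}.
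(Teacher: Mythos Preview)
Your final conclusion is exactly what the paper does: Theorem~\ref{thm.KMT01} is not proved in the paper at all but is quoted verbatim from \cite{KMT} (``For convenience, we restate the celebrated KMT inequality''). The dyadic KMT sketch you give is correct in spirit and would indeed yield the statement, but it is unnecessary here since the paper treats this result as a black-box citation and moves directly to combining it with Koltchinskii's symmetric-convex-hull extension (Theorem~\ref{thm.sconv}).
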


However, we need a functional version of KMT. We shall prove this by using the theorem above in combination with a result due to Koltchinskii \cite{kol}, (Theorem 11.4, p. 112) stating that the supremum over a function class $\mathcal{F}$ behaves as the supremum over the symmetric convex hull $\bsc(\mathcal{F})$, defined by
\begin{align*}
  \bsc(\mathcal{F}):= \Big\{\sum_{i=1}^\infty \lambda_i f_i: f_i\in \mathcal{F}, \lambda_i \in [-1,1], \sum_{i=1}^\infty |\lambda_i|\leq 1\Big\}.
\end{align*}

\begin{thm}
\label{thm.sconv}
Assume there exists a version $B$ of a Brownian bridge, such that for a sequence $(\widetilde\delta_n)_n$ tending to $0$,
\begin{align*}
  \P^*\Big(\sup_{f\in \mathcal{F}} |\U_n(f)-B(f)| \geq \widetilde\delta_n (x+C\log n)\Big)
  \leq Ke^{-\lambda x},
\end{align*}
where $C, K, \lambda>0$ are constants depending only on $\mathcal{F}$. Then, there exists a version $\widetilde B$ of a Brownian bridge, such that
\begin{align*}
  \P^*\Big(\sup_{f\in \bsc(\mathcal{F})} |\U_n(f)- \widetilde B(f)| \geq \widetilde\delta_n (x+C'\log n)\Big)
  \leq K'e^{-\lambda' x}
\end{align*}
for constants $C', K',\lambda'>0$.
\end{thm}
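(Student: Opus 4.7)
The plan is to exploit the crucial fact that both $\U_n$ and the Brownian bridge $B$ are \emph{linear} functionals in their argument $f$, from which the distance between them will automatically control itself when the index class is enlarged from $\mathcal{F}$ to $\bsc(\mathcal{F})$. In fact the theorem should follow with $C'=C$, $K'=K$, $\lambda'=\lambda$, i.e.\ with no loss in constants.

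First I would extend $B$ to $\bsc(\mathcal{F})$ by linearity: for $f = \sum_i \lambda_i f_i$ with $f_i\in\mathcal{F}$ and $\sum_i|\lambda_i|\leq 1$, set $\widetilde B(f) := \sum_i \lambda_i B(f_i)$. The series converges almost surely because $\sum_i|\lambda_i|<\infty$ and the sample paths $f\mapsto B(f)$ are a.s.\ bounded on $\mathcal{F}$ (after passing to a separable version). A direct covariance calculation then verifies that $\widetilde B$ is a centered Gaussian process with the Brownian-bridge covariance $\langle f,g\rangle - \langle f,1\rangle\langle g,1\rangle$, making it a genuine version of a Brownian bridge on $\bsc(\mathcal{F})$.

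Next, since $\U_n(f) = n^{-1/2}\sum_{i=1}^n (f(U_i)-\int f)$ is manifestly linear in $f$, the residual $\Delta_n := \U_n - \widetilde B$ is linear on $\bsc(\mathcal{F})$, and the core of the proof reduces to the one-line estimate
$$|\Delta_n(f)| = \Big|\sum_i \lambda_i \Delta_n(f_i)\Big| \leq \sum_i|\lambda_i|\,|\Delta_n(f_i)| \leq \sup_{g\in\mathcal{F}}|\Delta_n(g)|,$$
valid for every $f = \sum_i \lambda_i f_i \in \bsc(\mathcal{F})$. Combined with the trivial reverse inequality $\sup_{f\in\bsc(\mathcal{F})}|\Delta_n(f)|\geq \sup_{g\in\mathcal{F}}|\Delta_n(g)|$ (since $\mathcal{F}\subset \bsc(\mathcal{F})$), this yields $\sup_{f\in\bsc(\mathcal{F})}|\Delta_n(f)| = \sup_{g\in\mathcal{F}}|\Delta_n(g)|$, and the hypothesis then delivers the advertised tail bound verbatim.

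The only genuine obstacle is technical rather than probabilistic: one must make the extension $\widetilde B$ and the pointwise evaluation $\U_n(f)$ rigorous on the full symmetric convex hull, which requires convergence of $\sum_i \lambda_i f_i$ in a topology strong enough for both linear functionals to act continuously, and the usual measurability care that is the whole reason the theorem is phrased with an outer probability $\P^*$. Both issues are standard in the Koltchinskii framework provided $\mathcal{F}$ carries a measurable envelope, which will hold in every application thanks to the uniform bounds supplied by Assumption \ref{as.testfcts}.
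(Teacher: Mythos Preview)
The paper does not give its own proof of this theorem; it simply cites it as Koltchinskii's Theorem 11.4. Your linearity argument is precisely the mechanism behind that result, and your claim that one may take $C'=C$, $K'=K$, $\lambda'=\lambda$ is correct: once both $\U_n$ and $B$ act linearly, the inequality $|\Delta_n(\sum_i\lambda_i f_i)|\le \sum_i|\lambda_i|\,|\Delta_n(f_i)|\le \sup_{g\in\mathcal{F}}|\Delta_n(g)|$ forces the two suprema to coincide pointwise in $\omega$.

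Two technical points deserve a sharper statement than you give them. First, the definition $\widetilde B(f):=\sum_i\lambda_i B(f_i)$ depends a priori on the representation of $f$; you should say explicitly that two representations of the same $f$ differ by a series whose $L^2$-norm vanishes (by the Brownian-bridge covariance), so the extension is well-defined almost surely for each fixed $f$, which suffices for the finite-dimensional distributions. Second, your claim that $f\mapsto B(f)$ is a.s.\ bounded on $\mathcal{F}$ is not part of the hypothesis; it follows from the hypothesis only once you know $\sup_{f\in\mathcal{F}}|\U_n(f)|<\infty$ a.s., which in turn needs a uniform envelope for $\mathcal{F}$. You correctly note this is supplied in every application here (via Assumption \ref{as.testfcts} or the fact that $\mathcal{F}_0$ consists of indicators), but in the abstract statement it is an extra assumption that Koltchinskii's formulation carries explicitly.
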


In Theorem \ref{thm.sconv}, $\P^\star$ refers to the outer measure, however, for the function class considered in this paper, we have measurability of the corresponding event and hence may replace $\P^\star$ by $\P$.  It is well-known (cf.\ Gin\'e {\it et al.}\ \cite{gin2}, p. 172) that
\begin{align}
  \big\{ \rho \ \big| \ \rho:\mathbb{R}\rightarrow \mathbb{R},\ \supp \rho \subset [0,1], \  \rho(1)=0, \ \TV(\rho)\leq 1\big\} \subset \bsc(\mathcal{F}_0).
  \label{eq.inclusion}
\end{align}
Now, assume that $\rho:\mathbb{R}\rightarrow \mathbb{R}$ is such that $\TV(\rho)+3|\rho(1)|\leq 1$. Define $\widetilde \rho=(\rho-\rho(1)\mathbb{I}_{[0,1]})/(1-|\rho(1)|)$ and observe that $\TV(\widetilde \rho)\leq 1$ and $\widetilde \rho(1)=0$. By \eqref{eq.inclusion} there exists $\lambda_1,\lambda_2,\ldots \in \mathbb{R}$ and $t_1,t_2,\ldots \in [0,1]$ such that $\widetilde \rho=\sum \lambda_i\mathbb{I}_{[0,t_i]}$ and $\sum |\lambda_i|\leq 1$. Therefore, $\rho=(1-|\rho(1)|)\widetilde \rho+\rho(1)\mathbb{I}_{[0,1]}$ can be written as linear combination of indicator functions, such that the sum of the absolute values of weights is bounded by $1$. This shows
\begin{align*}
  \big\{ \rho \ \big| \ \rho:\mathbb{R}\rightarrow \mathbb{R}, \ \supp \rho \subset [0,1], \  \TV(\rho)+3|\rho(1)|\leq 1\big\} \subset \bsc(\mathcal{F}_0).
\end{align*}
Since $\TV(\psi_{t,h}\circ G^{-1})\leq \TV(\psi_{t,h})$ it follows by Assumption \ref{as.testfcts} (ii) that the function class
\begin{align*}
  \mathcal{F}_n:= \Big\{ C_\star V_{t,h}^{-1} \sqrt{h} \ \psi_{t,h} \circ G^{-1} : (t,h)\in B_n, \ G \in \mathcal{G}_{c,C,q} \Big\}
\end{align*}
is a subset of $\bsc(\mathcal{F}_0)$ for sufficiently small constant $C_\star$. Combining Theorems \ref{thm.KMT01} and \ref{thm.sconv} shows for $\widetilde \delta_n=n^{-1/2}$ that there are constants $C',K',\lambda'$ and a Brownian bridge $(B(f))_{f\in \bsc(\mathcal{F}_0)}$ such that for $x>0$, the probability of
\begin{align*}
    \Big\{\sup_{(t,h)\in B_n, \ G \in \mathcal{G}}
    C_\star  \frac{\sqrt h \big|\U_n\big(\psi_{t,h}\circ G^{-1}\big)-B\big(\psi_{t,h}\circ G^{-1}\big)\big|}{V_{t,h}} \geq \frac1 {\sqrt{n}}(x+C'\log n)\Big\}
\end{align*}
is bounded by $K'e^{-\lambda'x}$. Due to Lemma \ref{lem.delta1_tech} (i) and $l_n\geq \nu/n$ for sufficiently large $n$, we have that $w_{l_n}\leq w_{\nu/n}$. This readily implies with $x= \log n$ that
\begin{align*}
  \sup_{(t,h)\in B_n, \ G \in \mathcal{G}}
  w_h \frac{\Big|\big|T_{t,h}-\E\,T_{t,h}\big|
  - \big|B\big(\psi_{t,h}\circ G^{-1}\big)\big|\Big|}{V_{t,h} \ \sqrt{g(t)}}
   =O_P\Big(\frac 1{\sqrt{l_n n}} w_{\nu/n}\log n\Big).&
\end{align*}
Now, let us introduce the (general) Brownian motion $W(f)$ as a centered Gaussian process with covariance $\E[W(f)W(g)]=\langle f, g\rangle$. In particular, $W(f)=B(f)+(\int f) \xi$, $\xi \sim \mathcal{N}(0,1)$ and independent of $B$, defines a Brownian motion and hence there exists a version of $(W(f))_{f\in \bsc(\mathcal{F}_0)}$ such that $B(f)=W(f)-(\int f) W(1)$. We have
\begin{align*}
  \sup_{(t,h)\in B_n, \ G \in \mathcal{G}}
  w_h\frac{\big|\int \psi_{t,h} (u) \ dG(u)\big|}{V_{t,h} \ \sqrt{g(t)}}
  &\leq c^{-1}
  \sup_{(t,h)\in B_n, \ G \in \mathcal{G}}
  w_h\frac{\|\psi_{t,h}\|_1}{V_{t,h} \ \sqrt{g(t)}} \\
  &\lesssim \sup_{h\in[l_n,u_n]} w_h h^{1/2}\leq w_{u_n} u_n^{1/2},
\end{align*}
where the second inequality follows from Assumption \ref{as.testfcts} (ii) and the last inequality from Lemma \ref{lem.delta1_tech} (ii). This implies further
\begin{align*}
  \E\Big[\Big\|\frac{w_h}{V_{t,h} \ \sqrt{g(t)}}
  \Big[ \big|B\big(\psi_{t,h}\circ G^{-1}\big)\big|
  - \big|W\big(\psi_{t,h}\circ G^{-1}\big)\big|\Big]\Big\|_{\mathcal{F}_{n}}\Big]
  = O(w_{u_n}u_n^{1/2}),
\end{align*}
and therefore
\begin{align*}
  &\sup_{G \in \mathcal{G}} \Big|T_n^{(1)} -
  \sup_{(t,h)\in B_n}
  w_h\frac{\big|W\big(\psi_{t,h}\circ G^{-1}\big)\big|}{V_{t,h} \ \sqrt{g(t)}}-\widetilde w_h \Big|
  = O_P(\frac{w_{1/n} \log n}{\sqrt{l_n n}} + w_{u_n}u_n^{1/2}),
\end{align*}
and
\begin{align*}
  \sup_{G \in \mathcal{G}} \Big|T_n^{(1)} 
 -T_n^{(2)}(W) \Big|=O_P(l_n^{-1/2}n^{-1/2}w_{1/n}\log n + w_{u_n}u_n^{1/2}).
\end{align*}
In the last equality we used that $(W^{(1)}_t)_{t\in [0,1]}=(W(\mathbb{I}_{[0,t]}(\cdot)))_{t\in [0,1]}$ and $$(W_t)_{t\in \mathbb{R}}=\Big(\int_0^t \frac{\mathbb{I}_{\{g>0\}}(s)}{\sqrt{g(s)}}dW_{G(s)}^{(1)}\Big)_{t\in \mathbb{R}}$$ are (two-sided) standard Brownian motions, proving $W(\psi_{t,h}\circ G^{-1})= \linebreak \int \psi_{t,h}(s) \sqrt{g(s)} dW_s$ and hence \eqref{eq.Tn1Tn2}. Further note that Assumption \ref{as.testfcts} (iii) together with Lemma \ref{lem.empprocbd} shows that
\begin{align*}
  \sup_{G \in \mathcal{G}} \Big|T_n^{(2)}(W) -
  \sup_{(t,h)\in B_n}
  w_h
  \frac{\big|\int \psi_{t,h} (s) dW_s \big|}{V_{t,h}}-\widetilde w_h
  \Big|
  =O_P(\kappa_n).
\end{align*}
In a final step let us show that \eqref{eq.limitbd1} is almost surely bounded. In order to establish the result, we use Theorem 6.1 and Remark 1 of D\"umbgen and Spokoiny \cite{due2}. We set $\rho\big((t,h),(t',h')\big)=(|t-t'|+|h-h'|)^{1/2}$. Further, let $X(t,h)=\sqrt h V_{t,h}^{-1} \int \psi_{t,h}(s)dW_s$ and $\sigma(t,h)=h^{1/2}$.

By assumption, $X$ has continuous sample paths on $\mathcal{T}$ and obviously, for all $(t,h),(t',h')\in \mathcal{T}$,
\begin{align*}
	\sigma^2(t,h) \le \sigma^2(t',h')+\rho^2((t,h),(t',h')).
\end{align*}
Let $Z\sim \mathcal{N}(0,1)$. Since $X(t,h)$ is a Gaussian process and $V_{t,h}\geq \|\phi_{t,h}\|_2$, $\P(X(t,h)>\sigma(t,h)\eta)\leq \P(Z>\eta)\leq \exp(-\eta^2/2)$ for any $\eta>0$. Further, denote by
\begin{align}
  A_{t,t',h,h'}:= \left\|\frac{\psi_{t,h}\sqrt h}{V_{t,h}}
  -\frac{\psi_{t',h'}\sqrt h'}{V_{t',h'}}\right\|_2.
  \label{eq.defAtthh}
\end{align}
Because of $\P(|X(t,h)-X(t',h')\big|\geq A_{t,t',h,h'}\eta\Big)\leq 2\exp\big(-\eta^2/2\big)$ we have by Lemma \ref{lem.Atthh} for a universal constant $K>0$,
\begin{align*}
  \P\Big(\big|X(t,h)-X(t',h')\big|\geq
  \rho((t,h),(t',h')) \eta\Big)\leq 2\exp\big(-\eta^2/(2K^2)\big).
\end{align*}
Finally, we can bound the entropy $\mathcal{N}((\delta u)^{1/2}, \{(t,h)\in \mathcal{T} :h\leq \delta\})$ similarly as in \cite{due2}, p.\ 145. Therefore, application of Remark 1 in \cite{due2} shows that
\begin{align*}
	S := \sup_{(t,h) \in \mathcal{T}}
  \frac{\sqrt{\tfrac 12 \log \tfrac eh} \ \big|\int \psi_{t,h}(s) dW_s \big|}{\log \big(e\log \tfrac eh\big) \ V_{t,h} }-\frac{\sqrt{\log(\tfrac 1h)\log(\tfrac eh)}}{\log\big(e \log \tfrac eh\big)}
\end{align*}
is almost surely bounded from above. Define
\begin{align*}
 S':=\sup_{(t,h)\in \mathcal{T}}
  \frac{\sqrt{\tfrac 12 \log \tfrac \nu h} \ \big|\int \psi_{t,h}(s) dW_s \big|}{\log \log \tfrac \nu h \ V_{t,h}}-\frac{\sqrt{\log(\tfrac 1h)\log(\tfrac \nu h)}}{\log \log \tfrac \nu h }.
\end{align*}
If $e<\nu\leq e^e$, then 
\begin{align*}
  \log\log \tfrac \nu h=\log\Big(\tfrac{\log \nu}e\log \tfrac {e^e}{h^{e/\log \nu}}\Big)
  \geq \log\log \nu -1+\log\big(e\log \tfrac eh\big)
\end{align*}
implies
\begin{align*}
  \frac{\log\big(e\log\tfrac eh\big)}{\log\log \tfrac \nu h}\leq \frac 1{\log\log \nu} +1.
\end{align*}
Furthermore, $\log \nu/h\leq (\log \nu) (\log e/h)$. Suppose now that $S'>0$ (otherwise $S'$ is bounded from below by $0$). Then, $S'\lesssim S$ and hence $S'$ is almost surely bounded. Finally,
\begin{align*}
  \sqrt{\log \tfrac \nu h}\big | \sqrt{\log\tfrac 1h}-\sqrt{\log\tfrac \nu h}\big |\leq \log \nu.
\end{align*}
Therefore, \eqref{eq.limitbd1} holds, i.e.
\begin{align*}
 \sup_{(t,h)\in \mT}
  w_h\frac{\big|\int \psi_{t,h}(s) dW_s \big|}{V_{t,h}}-\widetilde w_h
\end{align*}
is almost surely bounded.

In the last step, it remains to prove that $\sup_{G\in \mathcal{G}_{c,C,q}}|T_n-T_n^{(1)}|=O_P(\sup_{G\in \mathcal{G}} \|\widehat g_n-g\|_\infty \log n/\log\log n)$. For sufficiently large $n$ and because $G \in \mathcal{G}$, $\widehat g_n\geq c/2$ for all $t\in [0,1]$. Therefore using Lemma \ref{lem.delta1_tech} (i),
\begin{align}
  \sup_{G\in \mathcal{G}} \big|T_n-T_n^{(1)}|
  &\leq 
  \sup_{(t,h)\in B_n, \ G\in \mathcal{G}}w_h \frac{\big|T_{t,h}-\E [T_{t,h}]\big|}{V_{t,h} \ \sqrt{g(t)}}
  \frac{\sup_{G\in \mathcal{G}}\big\|\widehat g_n- g\big\|_\infty}{\widehat g_n(t)} \notag \\
  &\leq \frac{2\sup_{G\in \mathcal{G}}\big\|\widehat g_n- g\big\|_\infty}{c}
  \sup_{(t,h)\in B_n, \ G\in \mathcal{G}} w_h \frac{\big|T_{t,h}-\E [T_{t,h}]\big|}{V_{t,h} \ \sqrt{g(t)}} \notag\\
  &\leq \frac{2\sup_{G\in \mathcal{G}}\big\|\widehat g_n- g\big\|_\infty}{c}(T_n^{(1)}+\sup_{h\in [l_n,u_n]} \widetilde w_h ) \notag\\
  &\leq \frac{2\sup_{G\in \mathcal{G}}\big\|\widehat g_n- g\big\|_\infty}{c}\big(T_n^{(1)}+O(\frac{\log n}{\log \log n})\big).
  \label{eq.TnTn1diff}
\end{align}
Since $T_n^{(1)}$ is a.s. bounded by Theorem \ref{thm.gmsc}, the result follows.
\end{proof}

\begin{rem}
\label{rem.thmmsc}
Next, we give a proof of Theorem \ref{thm.msc}. In fact we proof a slightly stronger version, which does not necessarily require the symbol $a$ to be elliptic and $V_{t,h}=\|v_{t,h}\|_2$. It is only assumed that 
\begin{itemize}
 \item[(i)]  $V_{t,h}\geq \|v_{t,h}\|_2$,
 \item[(ii)] there exists constants $c_V, C_V$ with $0<c_V\leq h^{m+r-1/2}V_{t,h}\leq C_V<\infty$
 \item[(iii)] for all $(t,h), (t',h')\in \mathcal{T}$ and whenever $h\leq h'$ it holds that $h^{m+r}|V_{t,h}-V_{t',h'}|\leq C_V(|t-t'|+|h-h'|)^{1/2}$.
\end{itemize}
As a special
case these conditions are satisfied for $V_{t,h}=\|v_{t,h}\|_2$ and $\op(a)$ elliptic. This follows directly from Lemmas \ref{lem.vthL2} and \ref{lem.vthvt'h'}.
\end{rem}

\begin{proof}[Proof of Theorem \ref{thm.msc}]
In order to prove the statements it is sufficient to check the conditions of Theorem \ref{thm.gmsc}. For $h>0$ define the symbol 
\begin{align}
 a_{t,h}^\star(x,\xi):=h^{\om} a^\star(xh+t,h^{-1}\xi).
  \label{eq.defathstar}
\end{align}
Under the imposed conditions and by Remark \ref{rem.vth=Kth} we may apply Lemma \ref{lem.Kthestimates} for $\mfp=a_{t,h}^\star$ and therefore, uniformly over $(t,h)\in \mT$ and $u,u'\in \mathbb{R}$,
\begin{itemize}
 \item[(I)] $|v_{t,h}(u)|\lesssim h^{-m-r}\min\big(1,\tfrac {h^2}{(u-t)^2}\big)$.
 \item[(II)] $|v_{t,h}(u)-v_{t,h}(u')|\lesssim h^{-m-r-1}|u-u'|$ and if $u,u'\neq t$,
$$
	|v_{t,h}(u)-v_{t,h}(u')|\lesssim h^{1-m-r}\frac{|u-u'|}{|u'-t| \ |u-t|}=h^{1-m-r}\big|\int_{u'}^u \frac 1{(x-t)^2} dx\big|.
$$
\end{itemize}

Using (I), we obtain $\|v_{t,h}\|_\infty\lesssim h^{-m-r}$ and $\|v_{t,h}\|_1\lesssim h^{1-m-r}$. In order to show that the total variation is of the right order, let us decompose $v_{t,h}$ further into $v_{t,h}^{(1)}=v_{t,h}\mathbb{I}_{[t-h,t+h]}$ and $v_{t,h}^{(2)}=v_{t,h}-v_{t,h}^{(1)}$. By (II), $\TV(v_{t,h}^{(1)})\lesssim h^{-m-r}$ and
\begin{align*}
  \TV(v_{t,h}^{(2)})\lesssim h^{-m-r}+h^{1-m-r} \int_{t+h}^\infty \frac 1{(x-t)^2} dx\lesssim h^{-m-r}. 
\end{align*}
Since $\TV(v_{t,h})\leq \TV(v_{t,h}^{(1)})+\TV(v_{t,h}^{(2)})\lesssim h^{-m-r}$, this shows together with Remark \ref{rem.thmmsc} that part (ii) of Assumption \ref{as.testfcts} is satisfied. 

Next, we verify Assumption \ref{as.testfcts}, (iii) with $\kappa_n= \sup_{(t,h)\in B_n} w_hh^{1/2}\log(1/h) \lesssim u_n^{1/2} \log^{3/2} n$ (cf. Lemma \ref{lem.delta1_tech}, (ii)), i.e. we show
\begin{align*}
  \sup_{(t,h)\in B_n, \ G\in \mathcal{G}}
  w_h \frac{\TV\big(v_{t,h}(\cdot) [\sqrt{g(\cdot)}-\sqrt{g(t)}] \langle \cdot \rangle^\alpha \big)}{V_{t,h}} \lesssim u_n^{1/2} \log^{3/2} n.
\end{align*}
By Lemma \ref{lem.condTVreplace}, we see that this holds for $v_{t,h}$ replaced by $v_{t,h}^{(1)}$. Therefore, it remains to prove the statement for $v_{t,h}^{(2)}$.
%
Let us decompose $v_{t,h}^{(2)}$ further into $v_{t,h}^{(2,1)}=v_{t,h}\mathbb{I}_{[t-1,t+1]\cap [t-h,t+h]^c}$ and $v_{t,h}^{(2,2)}=v_{t,h}^{(2)}- v_{t,h}^{(2,1)}=v_{t,h}\mathbb{I}_{[t-1,t+1]^c}$. For the remaining part, let $u,u'$ be such that $|u-t|\geq |u'-t|\geq h$. We have
\begin{align}
  \TV\big(v_{t,h}^{(2,1)}(\cdot)
  \big[\sqrt{g(\cdot)}-\sqrt{g(t)}\big]\langle \cdot \rangle^\alpha\big)
  &\lesssim
  \big\|v_{t,h}^{(2,1)}(\cdot)
  \big[\sqrt{g(\cdot)}-\sqrt{g(t)}\big]\big\|_\infty \notag \\
  &\quad +\TV\big(v_{t,h}^{(2,1)}(\cdot)
  \big[\sqrt{g(\cdot)}-\sqrt{g(t)}\big]\big).
  \label{eq.TVvth21bd}
\end{align}
Using (I) and (II) together with the properties of the class $\mathcal{G}$ we can bound the variation $\big|v_{t,h}^{(2,1)}(u)
 \big[\sqrt{g(u)}-\sqrt{g(t)}\big]-v_{t,h}^{(2,1)}(u') \big[\sqrt{g(u')}-\sqrt{g(t)}\big]\big|$ by
\begin{align*}
  &\big|v_{t,h}^{(2,1)}(u)-v_{t,h}^{(2,1)}(u')\big|
  \cdot \big|\sqrt{g(u')}-\sqrt{g(t)}\big|
  +\big|v_{t,h}^{(2,1)}(u)\big|
  \cdot \big|\sqrt{g(u)}-\sqrt{g(u')}\big| \\
  &\lesssim h^{1-m-r}\tfrac{|u-u'|}{|u-t|}+h^{2-m-r}\tfrac{|u-u'|}{|u-t|^2}
  \lesssim h^{1-m-r}\tfrac{|u-u'|}{|u-t|}
  \leq h^{1-m-r} \big|\int_{u'}^u \tfrac 1{|x-t|} dx\big|.
\end{align*}
Due to $h\geq l_n\gtrsim 1/n$ this yields
\begin{align*}
  \TV\big(v_{t,h}^{(2,1)}(\cdot)[\sqrt{g(\cdot)}-\sqrt{g(t)}]\big)
  &\lesssim h^{1-m-r}+h^{1-m-r}\int_{t+h}^{t+1} \frac {du}{|u-t|} \\
  &\lesssim h^{1-m-r}\log \tfrac 1h\lesssim h^{1-m-r}\log n
\end{align*}
and with \eqref{eq.TVvth21bd} also
\begin{align}
  \TV\big(v_{t,h}^{(2,1)}(\cdot)
  \big[\sqrt{g(\cdot)}-\sqrt{g(t)}\big]\langle \cdot \rangle^\alpha\big)
  \lesssim  h^{1-m-r}\log n.
  \label{eq.TVvth21}
\end{align}
Finally, let us address the total variation term involving $v_{t,h}^{(2,2)}$. Given $\mathcal{G}_{c,C,q}$ we can choose $\alpha$ such that $\alpha>1/2$ and $\alpha +q<1$ (recall that $0\leq q<1/2$). By Lemma \ref{lem.rthlem}, we find that 
\begin{align*}
  \big|v_{t,h}^{(2,2)}(u)
  \langle u \rangle^\alpha
  -v_{t,h}^{(2,2)}(u')\langle u' \rangle^\alpha\big|
  &\lesssim
  h^{1-m-r}\Big|\int _{u'}^u \frac1{(x-t)^{2-\alpha}}+\frac 1{(x-t)^2} dx\Big|.
\end{align*}
Moreover
\begin{align*}
  \langle u\rangle ^\alpha (1+|u'|+|u|)^{q}
  &\leq (1+|u'|+|u|)^{q+\alpha} \\ &\leq (3+2|u-t|)^{q+\alpha}\leq 3+2|u-t|^{q+\alpha}
\end{align*}
and thus
\begin{align*}
 \big|v_{t,h}^{(2,2)}(u)\langle u \rangle^\alpha \big|
  \  \big|\sqrt{g(u)}-\sqrt{g(u')} \big|
  \lesssim 
  h^{2-m-r}\frac{|u-t|^{q+\alpha}+1}{|u-t|^2}|u-u'|.
\end{align*}
This allows us to bound the variation by
\begin{align*}
  &\big|v_{t,h}^{(2,2)}(u)
  \big[\sqrt{g(u)}-\sqrt{g(t)}\big]\langle u \rangle^\alpha
  -v_{t,h}^{(2,2)}(u')
  \big[\sqrt{g(u')}-\sqrt{g(t)}\big]\langle u' \rangle^\alpha\big| \notag \\
  &
  \leq \big|v_{t,h}^{(2,2)}(u)\langle u \rangle^\alpha \big|
  \  \big|\sqrt{g(u)}-\sqrt{g(u')} \big|
  +\frac 2{\sqrt{c}}\big|v_{t,h}^{(2,2)}(u)
  \langle u \rangle^\alpha
  -v_{t,h}^{(2,2)}(u')\langle u' \rangle^\alpha\big| \notag \\
  &
  \lesssim h^{1-m-r}\Big|\int_{u'}^u \frac1{(x-t)^{2-q-\alpha}}+\frac 1{(x-t)^{2-\alpha}}
  +\frac 1{(x-t)^2} \ dx \ \Big|
\end{align*}
and therefore we conclude that
\begin{align*}
  &\TV\big(v_{t,h}^{(2,2)}(\cdot)
  \big[\sqrt{g(\cdot)}-\sqrt{g(t)}\big]\langle \cdot \rangle^\alpha\big) \\
  &\lesssim h^{1-m-r}+
  h^{1-m-r}\int_{t+1}^\infty
  \frac{1}{(x-t)^{2-q-\alpha}}+\frac 1{(x-t)^{2-\alpha}}+\frac 1{(x-t)^2} dx \\
  &\leq h^{1-m-r}.
\end{align*}
Together with the bound for $v_{t,h}^{(1)}$ and \eqref{eq.TVvth21} this yields Assumption \ref{as.testfcts}, (iii).

Finally, Assumption \ref{as.testfcts} (iv) follows from Lemma \ref{lem.vthvt'h'} and Remark \ref{rem.thmmsc} due to $\phi \in H^{\lceil r+m\rceil}\cap H^{r+m+1/2}$, $\supp \phi \subset [0,1]$ and $\phi \in \TV(D^{\lceil r+m\rceil}\phi)<\infty$. This shows that Assumption \ref{as.testfcts} holds for $(v_{t,h},V_{t,h})$.

In the next step, we verify that $(t,h)\mapsto X(t,h)=\sqrt{h}V_{t,h}^{-1}\int v_{t,h}(s) dW_s$ has continuous sample paths. Note that in view of Lemma \ref{lem.empprocbd}, it is sufficient to show that there is an $\alpha$ with $1/2<\alpha<1$ such that
\begin{align*}
  \TV\big(\big(\sqrt{h}V_{t,h}^{-1}v_{t,h}-\sqrt{h'}V_{t',h'}^{-1}v_{t',h'}\big)\langle \cdot \rangle ^\alpha \big)\rightarrow 0,
\end{align*}
whenever $(t',h')\rightarrow (t,h)$ on the space $\mT$. Since Assumption \ref{as.testfcts} (iv) holds, we have 
\begin{align*}
  \big|\sqrt{h}V_{t,h}^{-1}-\sqrt{h'}V_{t',h'}^{-1}\big|
  \leq 
  \frac{\sqrt{|h-h'|}}{V_{t,h}}+V_{t,h}^{-1}\frac{\sqrt{h'}|V_{t',h'}-V_{t,h}|}{V_{t',h'}}\rightarrow 0,
\end{align*}
for $(t',h')\rightarrow (t,h)$. By Lemma \ref{lem.rthlem}, $\TV(v_{t,h}(\cdot)\langle \cdot \rangle^\alpha)<\infty$. Therefore, it is sufficient to show that 
\begin{align}
  \TV\big((v_{t,h}-v_{t',h'})\langle \cdot \rangle ^\alpha \big)\rightarrow 0,
  \quad \text{whenever} \ (t',h')\rightarrow (t,h).
  \label{eq.TVvthdiffcond}
\end{align}
Using \eqref{eq.vthreprs}, we obtain
\begin{align*}
  &(K_{t,h}^{\gamma,\om}a_{t,h}^\star)(u) \\
  &=v_{t,h}-v_{t',h'} \\
  &= h^{-\om}
  \int \lambda_\gamma^\mu\big(\tfrac sh\big)\mathcal{F}\big(\Op(a_{t,h}^\star)(\phi-\phi\circ S_{t',h'}\circ S_{t,h}^{-1})\big)(s)e^{is(u-t)/h} \rqm s
\end{align*}
and by Remark \ref{rem.vth=Kth}, we can apply Lemma \ref{lem.Kthestimates} again (here $\phi$ should be replaced by $\phi-\phi\circ S_{t',h'}\circ S_{t,h}^{-1}$). In order to verify \eqref{eq.TVvthdiffcond}, observe that by Lemma \ref{lem.rthlem} it is enough to show $\|\phi-\phi\circ S_{t',h'}\circ S_{t,h}^{-1}\|_{H_4^{\overline q}}\rightarrow 0$ for some $\overline q>r+m+3/2$ whenever $(t',h')\rightarrow (t,h)$ in $\mathcal{T}$. Note that
\begin{align}
  &\big\|\phi-\phi\circ S_{t',h'}\circ S_{t,h}^{-1}\big\|_{H_4^{\overline q}}^2 \notag \\
  &=\tfrac 1h
  \sum_{j=0}^4 \int \langle s\rangle ^{2\overline q}
  \Big|\mathcal{F}\big((x^j\phi)\circ S_{t,h}\big)(s)
  -\mathcal{F}\big((S_{t,h}(\cdot))^j(\phi\circ S_{t',h'})\big)(s)\Big|^2ds\notag \\
  &\leq
  \frac 2h\sum_{j=0}^4 \big\|(x^j\phi) \circ S_{t,h}-(x^j\phi)\circ S_{t',h'}\big\|_{H^{\overline q}}^2 \notag \\
  &\quad +
  \int \langle s\rangle ^{2\overline q}\Big|
  \mathcal{F}\big(\big[(S_{t',h'}(\cdot))^j-(S_{t,h}(\cdot))^j\big](\phi\circ S_{t',h'})\big)(s)\Big|^2ds
  \label{eq.phiH4bd}
\end{align}
with $(S_{t,h}(\cdot))^j:=\big(\tfrac {\cdot-t}h\big)^j$. For real numbers $a,b$ we have the identity $a^j-b^j= \sum_{\ell=1}^k \binom{k}{\ell} b^{k-\ell}(a-b)^\ell$. Moreover, we can apply Lemma \ref{lem.vthvt'h'} for $\overline q$ with $m+r+3/2<\overline q < \lfloor r+m+5/2\rfloor$ (and such a $\overline q$ clearly exists). Thus, with $a=S_{t,h}(\cdot), \ b=S_{t',h'}(\cdot)$ and $S_{t,h}-S_{t',h'}=(h/h'-1)S_{t',h'}-(t'-t)/h$ the r.h.s. of \eqref{eq.phiH4bd} converges to zero if $(t',h')\rightarrow(t,h)$.
%

\end{proof}

\begin{proof}[Proof of Theorem \ref{thm.speciallimits}] By assumption, $p_R(x,\xi)=a_{R}(x,\xi)|\xi|^{\gamma_1}\iota_\xi^{\mu_1}$ with $a_{R}\in S^{\overline{m}_1}$ and $\overline m_1+\gamma_1=m'$. Recall that $p_P(x,\xi)=a_P(x)|\xi|^m\iota_\xi^\mu$. Since $a_P$ is real-valued, $\Op(a_P)$ is self-adjoint.  Taking the adjoint is a linear operator and therefore arguing as in \eqref{eq.comp_adj} yields  
\begin{align*}
  \mathcal{F}\big(\op(p)^\star (\phi\circ S_{t,h})\big)(s)
  =&|s|^m\iota_s^{-\mu} \mathcal{F}\big(a_P(\phi\circ S_{t,h})\big)(s) \\
  &+|s|^{\gamma_1}\iota_s^{-\mu_1}\mathcal{F}\big(\Op(a_R^\star)(\phi\circ S_{t,h})\big)(s).
\end{align*}
Decompose $v_{t,h}=v_{t,h}^{(1)}+v_{t,h}^{(2)}$ with
\begin{align*}
  v_{t,h}^{(1)}(u)
  &:=
  \int \lambda_m^\mu(s) \mathcal{F}\big(a_P(\phi\circ S_{t,h})\big)(s) e^{isu} \rqm s \\
  &=  \int \lambda_m^\mu\big(\tfrac sh\big) 
  \mathcal{F}\big(a_P(\cdot h+t)\phi\big)(s)e^{is(u-t)/h} \rqm s \\
  v_{t,h}^{(2)}(u)
  &:= \int \lambda_{\gamma_1}^{\mu_1}(s)\mathcal{F}\big(\Op(a_R^\star)(\phi\circ S_{t,h})\big)(s) e^{isu} \rqm s \\
  &= h^{-\om_1}\int \lambda_{\gamma_1}^{\mu_1}\big(\tfrac sh\big) 
  \mathcal{F}\big(\Op(a^{(1)}_{t,h})\phi\big)(s)e^{is(u-t)/h} \rqm s
\end{align*}
using similar arguments as in \eqref{eq.vthreprs} and $a_{t,h}^{(1)}(x,\xi):=h^{\om_1} a_R^\star(xh+t,h^{-1}\xi)$. For $j=1,2$ we denote by $T_{t,h}^{(j)}$ and $T_n^{P,(j)}$ the statistics $T_{t,h}$ and $T_n^P$ with $v_{t,h}$ replaced by $v_{t,h}^{(j)}$, $j=1,2$, respectively. Recall the definitions of $\sigma$ and $\tau$ and set
\begin{align}
  v_{t,h}^P(u)
  &:= Aa_P(t)\int |s|^{r+m}\iota_s^{-\rho-\mu} \mathcal{F}(\phi\circ S_{t,h})(s)e^{isu} \rqm s \notag \\
  &= Ah^{-r-m}a_P(t)\int |s|^{r+m}\iota_s^{-\rho-\mu} \mathcal{F}(\phi)(s)e^{is(u-t)/h} \rqm s \notag \\
  &= Aa_P(t) D_+^{\sigma}D_{-}^{\tau} \phi\big(\tfrac{u-t}h\big).
  \label{eq.vthPdef}
\end{align}
Further let
$$
	V_{t,h}^P:=\|v_{t,h}^P\|_2=|A a_P(t)|\big\|D_+^{r+m}\phi((\cdot-t)/h)\big\|_2=h^{1/2-r-m}|Aa_P(t)| \big\|D_+^{r+m}\phi\big\|_2,
$$
and
\begin{align*}
  T_n^{P,(1),\infty}(W)
  :=
  \sup_{(t,h)\in B_n} w_h \left(\frac{\big |\int \Re v_{t,h}^{(1)}(s) dW_s \big|}{V_{t,h}^P}-\sqrt{2\log \tfrac{\nu}h}\right).
\end{align*}
Note that for the approximation of $T_n^P$ we can write
\begin{align*}
  T_n^{P,\infty}(W)
  =
  \sup_{(t,h)\in B_n} w_h \left(\frac{\big |\int \Re v_{t,h}^P(s) dW_s \big|}{V_{t,h}^P}-\sqrt{2\log \tfrac{\nu}h}\right).
\end{align*}
Since $|T_n^P-T_n^{P,\infty}(W)|\leq |T_n^P-T_n^{P,(1)}|+|T_n^{P,(1)}-T_n^{P,(1),\infty}(W)|+|T_n^{P,(1),\infty}(W)-T_n^{P,\infty}(W)|$ it is sufficient to show that there exists a Brownian motion $W$ such that the terms on the right hand side converge to zero in probability. This will be done separately, and proofs for the single terms are denoted by $(I), (II)$ and $(III)$. From $(II)$ and $(III)$ we will be able to conclude the boundedness of the approximating statistic.

{\it (I):} It is easy to see that for a constant $K$, $\|v_{t,h}^{(2)}\|_2\leq Kh^{1/2-m'-r}=:V_{t,h}^R$. By Remark \ref{rem.thmmsc} and
\begin{align*}
  \big|T_n^P-T_n^{P,(1)}\big| 
  \leq 
  \sup_{h\in [l_n,u_n]} \frac{V_{t,h}^R}{V_{t,h}^P} 
  \Big(\sup_{(t,h)\in B_n} w_h\Big(\frac{|T_{t,h}^{(2)}-\E T_{t,h}^{(2)}|}{\sqrt{\widehat g_n(t)} \ V_{t,h}^R}-\sqrt{2\log\big(\tfrac {\nu}h\big)}\Big) & \notag \\
  +\sup_{h\in [l_n,u_n]} w_h \sqrt{2\log\big(\tfrac {\nu}h\big)}&\Big),
\end{align*}
we can apply Theorem \ref{thm.msc} where $m$ should be replaced by $m'$, of course. Because of $u_n^{m-m'}\log n \rightarrow 0$, $(I)$ is proved.

{\it (II):} We show that there is a Brownian motion $W$ such that $|T_n^{P,(1)}-T_n^{P,(1),\infty}(W)|\leq |T_n^{P,(1)}-\widetilde T_n^{(1)}|+|\widetilde T_n^{(1)} - \widetilde T_n^{(1),\infty}(W)|+|\widetilde T_n^{(1),\infty}(W)-T_n^{P,(1),\infty}(W)|$ $=o_P(1)$ with
\begin{align*}
  \widetilde T_n^{(1)} := \sup_{(t,h)\in B_n} w_h \Big(\frac{\big|T_{t,h}^{(1)}-\E T_{t,h}^{(1)} \big|}{\sqrt{\widehat g_n(t)} \ \|v_{t,h}^{(1)}\|_2} -\sqrt{2\log\big(\tfrac {\nu}h\big)}\Big)
\end{align*}
and
\begin{align*}
  \widetilde T_n^{(1),\infty}(W) := \sup_{(t,h)\in B_n} w_h \Big(\frac{\big|\int \Re v_{t,h}^{(1)}(s) dW_s\big |}{\sqrt{\widehat g_n(t)} \ \|v_{t,h}^{(1)}\|_2} -\sqrt{2\log\big(\tfrac {\nu}h\big)}\Big).
\end{align*}
Since by Assumption \ref{as.prinsymb}, $a_p\in S^0$ is elliptic and $p_P\in \underline S^m$,  we find that $|\widetilde T_n^{(1)}-\widetilde T_n^{(1),\infty}(W)|=o_P(1)$ and
\begin{align}
  \widetilde T_n^{(1),\infty}(W) \leq  \sup_{(t,h)\in \mathcal{T}} w_h \Big(\frac{\big|\int \Re v_{t,h}^{(1)}(s) dW_s\big |}{\sqrt{\widehat g_n(t)} \ \|v_{t,h}^{(1)}\|_2} -\sqrt{2\log\big(\tfrac {\nu}h\big)}\Big)<\infty \quad \text{a.s.}
  \label{eq.Tn1inftyasbd}
\end{align}
by applying Theorem \ref{thm.msc}. Moreover, similar as in \eqref{eq.TnTn1diff} and using $w_h\sqrt{2\log\big(\tfrac \nu h\big)}\geq 1$, 
\begin{align*}
  \sup_{G\in \mathcal{G}} \big| T_n^{P,(1)}-\widetilde T_n^{(1)} \big|
  \leq \sup_{(t,h)\in B_n}  w_h\sqrt{2\log\big(\tfrac \nu h\big)}\frac{\big|V_{t,h}^P-\|v_{t,h}^{(1)}\|_2\big|}{V_{t,h}^P}
  \Big(1+\sup_{G\in \mathcal{G}} \widetilde T_n^{(1)}\Big)
\end{align*}
and
\begin{align*}
  &\big|\widetilde T_n^{(1),\infty}(W)-T_n^{P,(1),\infty}(W) \big| \\
  &\leq \sup_{(t,h)\in B_n}  w_h\sqrt{2\log\big(\tfrac \nu h\big)}\frac{\big|V_{t,h}^P-\|v_{t,h}^{(1)}\|_2\big|}{V_{t,h}^P}
  \Big(1+\widetilde T_n^{(1),\infty}(W)\Big).
\end{align*}
To finish the proof for $(II)$ it remains to verify
\begin{align}
 \sup_{(t,h)\in B_n}  w_h\sqrt{2\log\big(\tfrac \nu h\big)} \frac{\|v_{t,h}^P-v_{t,h}^{(1)}\|_2}{V_{t,h}^P}=o(1),
  \label{eq.1condproofSL}
\end{align}
which will be done below.

{\it (III):} By Lemma \ref{lem.empprocbd}, we obtain $|T_n^{P,(1),\infty}-T_n^{P,\infty}|=o_P(1)$ if for some $\alpha>1/2$,
\begin{align}
  \sup_{(t,h)\in B_n} w_h \frac{
  \TV\big((v_{t,h}^P-v_{t,h}^{(1)})\langle \cdot \rangle^\alpha \big)}{V_{t,h}^P}
  =o(1).
  \label{eq.cond_nom_conv}
\end{align}

Let $\chi$ be a cut function, i.e. $\chi\in \mathcal{S}$ (the Schwartz space), $\chi(x)=1$ for $x\in[-1,1]$ and $\chi(x)=0$ for $x\in (-\infty,-2]\cup[2,\infty)$ and define $p_{t,h}^{(1)}(x,\xi)=h^{-1}\chi(x)(a_P(xh+t)-a_P(t))$ and $p_{t,h}^{(2)}(x,\xi)=(xh)^{-1}(1-\chi(x))(a_P(xh+t)-a_P(t))$. Then, $p_{t,h}^{(1)}, p_{t,h}^{(2)}\in S^0$ and $\big(a_P(\cdot h+t)-a_P(t)\big)\phi = h\Op(p_{t,h}^{(1)})\phi+h\Op(p_{t,h}^{(2)})(x\phi)$. Define the function
\begin{align}
  d_{t,h}:= \int e^{is(\cdot-t)/h} \Big(\frac 1{\mathcal{F}(f_\epsilon)(-\tfrac sh)}-A\iota_s^{-\rho}\big|\tfrac{s}h\big|^r\Big)\iota_s^{-\mu}|s|^m \mathcal{F}(\phi)(s) \rqm s
  \label{eq.dthdef}
\end{align}
and note that
\begin{align*}
  \|d_{t,h}\|_2^2 \lesssim h^{1+2m}\int \big\langle \tfrac sh \big \rangle^{2r+2m-2\beta_0} \big|\mathcal{F}(\phi)(s)\big|^2 ds
  \lesssim h^{1+2\beta_0^\star-2r} \|\phi\|_{H^{r+m}}^2
\end{align*}
with $\beta_0^\star:=\beta_0 \wedge (m+r)$. Using \eqref{eq.Kthdef}, we have now the decomposition
\begin{align}
  v_{t,h}^{(1)}-v_{t,h}^P
  &=hK_{t,h}^{m,0} p_{t,h}^{(1)}+hK_{t,h}^{m,0} p_{t,h}^{(2)}
  +
  a_P(t)h^{-m} d_{t,h},
  \label{eq.vthvthPdiff}
\end{align}
where $\phi$ needs to be replaced by $x\phi$ in the second term of the right hand side. By assumption there exists $q>m+r+3/2$ such that $\phi \in H_5^q$. Since the assumptions on $p_{t,h}^{(1)}$ and $p_{t,h}^{(2)}$ of Lemma \ref{lem.Kthestimates} can be easily verified, we may apply Lemma \ref{lem.Kthestimates} to the first two terms on the right hand side of \eqref{eq.vthvthPdiff}. This yields together with Lemmas \ref{lem.rthlem}, \ref{lem.dthapprox}, and \ref{lem.TVvthPlem}, uniformly over $(t,h)\in \mathcal{T}$, 
\begin{align*}
  &\TV\big((v_{t,h}^P-v_{t,h}^{(1)})\langle \cdot \rangle^\alpha \big) \\
  &\leq \TV\big(\big(hK_{t,h}^{m,0} p_{t,h}^{(1)}+hK_{t,h}^{m,0} p_{t,h}^{(2)}
  +
  a_P(t)h^{-m}d_{t,h}\big)\langle \cdot \rangle^\alpha \mathbb{I}_{[t-1,t+1]}\big) \\
  &\quad +\TV\big(v_{t,h}^P\langle \cdot \rangle^\alpha \mathbb{I}_{\mathbb{R}\setminus [t-1,t+1]}\big)
  +\TV\big(v_{t,h}^{(1)}\langle \cdot \rangle^\alpha \mathbb{I}_{\mathbb{R}\setminus [t-1,t+1]}\big) \\
  &\lesssim h^{1-m-r}+h^{\beta_0^\star-m-r}+h^{1-r-m}.
\end{align*}
Since $m+r>1/2$ this implies \eqref{eq.cond_nom_conv}. From the decomposition \eqref{eq.vthvthPdiff} we obtain further $\|v_{t,h}^P-v_{t,h}^{(1)}\|_2 \lesssim h^{3/2-m-r}+h^{1/2+\beta_0^\star-m-r}$ and this shows \eqref{eq.1condproofSL}. Thus, the first part of the theorem is proved. 

Finally with Lemma \ref{lem.empprocbd} it is easy to check that \eqref{eq.Tn1inftyasbd} implies that \eqref{eq.limitbd3} is bounded since \eqref{eq.1condproofSL} and \eqref{eq.cond_nom_conv} also hold with $B_n$ and $o(1)$ replaced by $\mathcal{T}$ and $O(1)$, respectively.
\end{proof}


\section{Technical results for the proofs of the main theorems}

We have the following uniform and continuous embedding of Sobolev spaces.

\begin{lem2}
\label{lem.unifemb}
Let $\mathcal{P} \subset S^m$ be a symbol class of pseudo-differential operators. Suppose further that for $\alpha \in \{0,1\}$, $k\in \mathbb{N}$ and finite constants $C_k$, depending on $k$ only,
\begin{align*}
  \sup_{p\in \mathcal{P}} |\partial_x^k \partial_\xi^\alpha p(x,\xi)|\leq C_k (1+|\xi|)^m,
  \quad \forall x,\xi\in \mathbb{R}.
\end{align*}
Then, for any $s\in \mathbb{R},$ there exists a finite constant $C$, depending only on $s,m$ and $\max_{k\leq 1+2|s|+2|m|} C_k$, such that
\begin{align*}
  \sup_{p\in \mathcal{P}} \|\Op(p)\phi\|_{H^{s-m}}\leq C \|\phi\|_{H^s}, \quad \text{for all} \ \ \phi \in H^s.
\end{align*}
\end{lem2}

\begin{proof}
This proof requires some subtle technicalities, appearing in the theory of pseudo-differential operators. By Theorem 2 in Hwang \cite{hwa}, there exists a universal constant $C_1,$ such that for any symbol $a\in S^0,$
\begin{align}
  \|\Op(a)u\|_2\leq C_1 \max_{\alpha,\beta\in\{0,1\}} \big\|\partial_x^\beta \partial_\xi^\alpha a(x,\xi)\big\|_{L^\infty(\mathbb{R}^2)} \|u\|_2, \quad \text{for all} \ u\in L^2.
  \label{eq.uniformL2bound}
\end{align}
For $r\in \mathbb{R}$ denote by $\Op(\langle \xi\rangle^r)$ the pseudo-differential operator with symbol $(x,\xi)\mapsto \langle \xi\rangle^r$. It is well-known that this symbol is in $S^r$. Throughout the remaining proof let
$$
	C=C(s,m, \max_{k\leq 1+2|s|+2|m|} C_k)
$$
denote a finite but unspecified constant which may even change from line to line. In order to prove the result it is sufficient to show that
\begin{align*}
  \sup_{p\in \mathcal{P}}\big\|\Op(\langle \xi\rangle^{s-m})\circ \Op(p)\circ \Op(\langle \xi\rangle^{-s})\psi \big\|_2\leq C\|\psi\|_2, \quad \text{for all} \ \psi\in L^2
\end{align*}
(set $\phi=\langle D\rangle^{-s} \psi$). The composition of two operators with symbols in $S^{m_1}$ and $S^{m_2}$, respectively, is again a pseudo-differential operator and its symbol is in $S^{m_1+m_2}$. Therefore, the operator $A:\mathcal{P}\rightarrow S^0,$ mapping $p\in \mathcal{P}$ to the symbol of $\Op(\langle \xi\rangle^{s-m})\circ \Op(p)\circ \Op(\langle \xi\rangle^{-s})$ (which is in $S^0$), is well-defined. With \eqref{eq.uniformL2bound} the lemma is proved, once we have established that
\begin{align*}
  \sup_{p \in \mathcal{P}} \max_{\alpha,\beta\in\{0,1\}} \big\|\partial_x^\beta \partial_\xi^\alpha Ap(x,\xi)\big\|_{L^\infty(\mathbb{R}^2)}\leq C<\infty.
\end{align*}
It is not difficult to see that $\Op(p)\circ \Op(\langle \xi\rangle^{-s})= \Op(p\langle \xi\rangle^{-s})$. By Theorem 4.1 in \cite{ali}, $Ap = \langle \xi\rangle^{s-m} \# (p\langle \xi\rangle^{-s})$, where $\#$ denotes the Leibniz product, i.e. for $p^{(1)}\in S^{m_1}$ and $p^{(2)}\in S^{m_2}$, $p^{(1)}\#p^{(2)}$ can be written as an oscillatory integral (cf. \cite{ali, wlo}), that is
\begin{align*}
  \big(p^{(1)}\#p^{(2)}\big)(x,\xi)
  &:=
  \Os-\int\int e^{-iy\eta}p^{(1)}(x,\xi+\eta)p^{(2)}(x+y,\xi) dy \rqm \eta \\
  &:= \lim_{\epsilon \rightarrow 0}
  \int\int \chi(\epsilon y,\epsilon \eta)e^{-iy\eta}p^{(1)}(x,\xi+\eta)p^{(2)}(x+y,\xi)dy \rqm \eta,
\end{align*}
for any $\chi$ in the Schwartz space of rapidly decreasing functions on $\mathbb{R}^2$ with $\chi(0,0)=1$. Further for $a\in S^m$ and arbitrary $l\in \mathbb{N}$, $2l>1+m$,
\begin{align*}
  &\Os-\int\int e^{-iy\eta}a(y,\eta)dy \rqm \eta \\
  &= \int\int e^{-iy\eta}\langle y\rangle^{-2}(1-\partial_\eta^2)\big[\langle \eta\rangle^{-2l}(1-\partial_y^2)^l a(y,\eta)\big]dy \rqm \eta
\end{align*}
and the integrand on the r.h.s. is in $L^1$ (cf. \cite{wlo}, p.235). This can be also used to show that differentiation and integration commute for oscillatory integrals,
\begin{align*}
  \partial_x^\alpha\partial_\xi^\beta 
  \Os-\int\int e^{-iy\eta}a(x,y,\xi,\eta)dy \rqm \eta
  =\Os-\int\int e^{-iy\eta} \partial_x^\alpha\partial_\xi^\beta a(x,y,\xi,\eta)dy \rqm \eta.
\end{align*}
Using Peetre's inequality, i.e. $\langle \xi+\eta\rangle^s\leq 2^{|s|}\langle \xi\rangle^{|s|}\langle \eta\rangle^s$, we see that for $\alpha,\beta\in \{0,1\},$ $p\in \mathcal{P},$ and $(x,\xi)$ fixed, the function $(y,\eta)\mapsto \partial_x^\beta\partial_\xi^\alpha \langle \xi+\eta\rangle^{s-m}p(x+y,\xi)\langle \xi\rangle^{-s}$ defines a symbol in $S^{s-m}$. Hence, for $\ell\in \mathbb{N}$, $1<2\ell-|s-m|\leq 2$, $\alpha,\beta\in \{0,1\}$, $p\in \mathcal{P},$ we can rewrite $\partial_x^\beta\partial_\xi^\alpha Ap(x,\xi)$ as
\begin{align*}
  \int\int e^{-iy\eta} \langle y\rangle^{-2}(1-\partial_\eta^2)\big[\langle \eta\rangle^{-2\ell}(1-\partial_y^2)^\ell \partial_x^\beta\partial_\xi^\alpha \langle \xi+\eta\rangle^{s-m}p(x+y,\xi)\langle \xi\rangle^{-s}\big] dy \rqm \eta.
\end{align*}
With the imposed uniform bound on $\partial_x^k\partial_\xi^\alpha p(x,\xi)$ we obtain, treating the cases $\alpha=0$ and $\alpha=1$ separately,
\begin{align*}
  &\sup_{p\in \mathcal{P}} \big|\partial_x^\beta\partial_\xi^\alpha Ap(x,\xi)\big| \\
  &\leq
  C\langle \xi\rangle^{m-s} \Big[ \int \big|(1-\partial_\eta^2) \langle \eta\rangle^{-2\ell}\langle \xi+\eta\rangle^{s-m}\big|d\eta  \\
  &\quad\quad\quad\quad\quad +\int \big|(1-\partial_\eta^2) \langle \eta\rangle^{-2\ell}\partial_\xi \langle \xi+\eta\rangle^{s-m}\big|d\eta\Big] \\
  &\leq  C+C\langle \xi\rangle^{m-s} \Big[ \int \big|\partial_\eta^2 \langle \eta\rangle^{-2\ell}\langle \xi+\eta\rangle^{s-m}\big|d\eta 
  +\int \big|\partial_\eta^2 \langle \eta\rangle^{-2\ell}\partial_\xi \langle \xi+\eta\rangle^{s-m}\big| d\eta\Big]
\end{align*}
using Peetre's inequality again and $2\ell>1+|s-m|$ for the second estimate. Since $\langle \xi\rangle^q \in S^q$ for $q\in \mathbb{R}$, it follows that $|\partial_\xi^\alpha \langle \xi\rangle^q|\lesssim \langle \xi\rangle^{q-\alpha}$, and since $\langle .\rangle\geq 1$,
\begin{align*}
  \partial_\eta^2 \langle \eta\rangle^{-2\ell}\langle \xi+\eta\rangle^{s-m}
  \lesssim 
  \sum_{k=0}^2 \langle \eta\rangle^{-2\ell-k} \langle \xi+\eta\rangle^{s-m-2+k}
  \lesssim \langle \eta\rangle^{-2\ell}\langle \xi+\eta\rangle^{s-m}.
\end{align*}
Similar for the second term. Application of Peetre's inequality as above completes the proof.
\end{proof}

Note that for bounded intervals $[a,b]$, partial integration holds $\int_a^b f'g= fg |_a^b - \int_a^b fg'$ whenever $f$ and $g$ are absolute continuous on $[a,b]$. As a direct consequence, we have $\int_{\mathbb{R}} f'g = -\int_{\mathbb{R}} fg'$ if $f'$ and $g'$ exist and $fg, f'g, fg' \in L^1$.


In order to formulate the key estimate for proving Theorems \ref{thm.msc} and \ref{thm.speciallimits}, let us introduce for fixed $\phi$ a generic symbol $\mfp \in S^{\om}$ and $\lambda=\lambda_\gamma^\mu$ as in \eqref{eq.lambdadef}:
\begin{align}
  (K_{t,h}^{\gamma,\overline m}\mfp)(u)=h^{-\om}\int \lambda\big(\tfrac sh\big)\mathcal{F}\big(\Op(\mfp)\phi\big)(s)
  e^{is(u-t)/h} \rqm s.
  \label{eq.Kthdef}
\end{align}
From the context it will be always clear which $\phi$ the operator $K_{t,h}^{\gamma,\overline m}\mfp$ refers to. To simplify the expressions we do not indicate the dependence on $\phi$ and $f_\epsilon$ explicitly. 

\begin{rem2}
\label{rem.vth=Kth}
Recall \eqref{eq.defathstar} and note that if $a\in S^{\om}$ then also $a_{t,h}^\star\in S^{\om}$. Due to
\begin{align*}
 \big(\Op(a_{t,h}^\star)\phi\big)\circ S_{t,h}=h^{-\om}\Op(a^\star)(\phi\circ S_{t,h})
\end{align*}
we obtain for $v_{t,h}$ in \eqref{eq.vthdef} the representation,
\begin{align}
  v_{t,h}(u)
  &= 
  h^{-{\om}} 
  \int \lambda_\gamma^\mu\big(\tfrac sh\big)\mathcal{F}\big(\Op(a_{t,h}^\star)\phi\big)(s)
  e^{is(u-t)/h} \rqm s
  =\big(K_{t,h}^{\gamma,\overline m} a_{t,h}^\star\big)(u).
  \label{eq.vthreprs}
\end{align} 
\end{rem2}

\begin{lem2}
\label{lem.Kthestimates}
For $\mfp\in S^{\overline m}$ and $\gamma+\overline m= m$ let $K_{t,h}^{\gamma,\overline m}\mfp$ be as defined in \eqref{eq.Kthdef}. Work under Assumption \ref{ass.noise} and suppose that 
\begin{itemize}
 \item[(i)] $\phi \in H_4^q$ with $q>m+r+3/2$,
 \item[(ii)] $\gamma \in \{0\} \cup [1,\infty)$, and
 \item[(iii)] for $k\in \mathbb{N}$, $\alpha\in \{0,1,\ldots,5\}$, there exist finite constants $C_k$ such that
\begin{align*}
  \sup_{(t,h)\in \mT} \big|\partial_x^k\partial_\xi^\alpha \mfp(x,\xi)\big|
  \leq C_k(1+|\xi|)^{\om}, \quad \text{for all} \ x,\xi\in \mathbb{R}.
\end{align*} 
\end{itemize}

Then, there exists a constant $C=C(q, r,\gamma, \om, C_l, C_u, \max_{k\leq 4q} C_k)$ ($C_l$ and $C_u$ as in Assumption \ref{ass.noise}) such that for $(t,h)\in \mathcal{T}$,
\begin{itemize}
 \item[(i)] $|(K_{t,h}^{\gamma,\overline m}\mfp)(u)|\leq C\|\phi\|_{H_4^q} h^{-m-r}\min\big(1,\tfrac {h^2}{(u-t)^2}\big)$,
 \item[(ii)] $|(K_{t,h}^{\gamma,\overline m}\mfp)(u)-(K_{t,h}^{\gamma,\overline m}\mfp)(u')|\leq C\|\phi\|_{H_4^q}h^{-m-r-1}|u-u'|$ and for $u,u'\neq t$, 
\begin{align*}  
  |(K_{t,h}^{\gamma,\overline m}\mfp)(u)-(K_{t,h}^{\gamma,\overline m}\mfp)(u')|
  &\leq C\|\phi\|_{H_4^q}\frac{h^{1-m-r} |u-u'|}{|u'-t| \ |u-t|} \\
  &=C\|\phi\|_{H_4^q} h^{1-m-r}\big|\int_{u'}^u \frac 1{(x-t)^2} dx\big|.
\end{align*}
\end{itemize}
\end{lem2}

\begin{proof}
During this proof, $C=C(q, r,\gamma, \om, C_l, C_u, \max_{k\leq 4q} C_k)$ denotes an unspecified constant which may change in every line. The proof relies essentially on the well-known commutator relation for pseudo-differential operators, $[x,\Op(p)]=i\Op(\partial_\xi p)$, with $\partial_\xi p: (x,\xi)\mapsto \partial_\xi p(x,\xi)$ (cf. Theorem 18.1.6 in \cite{hoer}). By induction for $k\in \mathbb{N}$,
\begin{align}
  x^k \Op(\mfp) = \sum_{r=0}^k \binom{k}{r} i^r \Op\big(\partial_\xi^r \mfp\big) x^{k-r}.
  \label{eq.binomcomrel}
\end{align}

As a preliminary result, let us show that for $k=0,1,2$ the $L^1$-norms of 
\begin{align}
  \langle s\rangle \ D_s^k \ \lambda\big(\tfrac sh\big) \mathcal{F}(\Op(\mfp)\phi)(s),
  \label{eq.sderivFquotL1bd}
\end{align}
are bounded by $C \|\phi\|_{H^q_2}h^{-r-\gamma}$. Using Assumption \ref{ass.noise} and Lemma \ref{lem.unifemb} this follows immediately for $k=0$ and $q>r+m+3/2$ by 
\begin{align}
  \int \Big|\langle s\rangle \ \lambda\big(\tfrac sh\big)\mathcal{F}(\Op(\mfp)\phi)(s)\Big| ds
  &\leq C_l^{-1}h^{-r-\gamma} \big\| \langle \cdot \rangle^{1+r+\gamma} \ \mathcal{F}(\Op(\mfp)\phi)\big\|_1
  \notag \\
  &\leq Ch^{-r-\gamma} \big\|\Op(\mfp)\phi\big\|_{H^{q-\om}} \notag \\  
  &\leq Ch^{-r-\gamma} \|\phi\|_{H^q}.
  \label{eq.L1boundk=0}
\end{align}
Now, $\mfp \in S^{\om}$ implies that for $k\in \mathbb{N}$, $\partial_\xi^k \mfp \in S^{\om-k}\subset S^{\om}$. Since by \eqref{eq.binomcomrel}, Assumptions (i) and (iii), and Lemma \ref{lem.unifemb},
\begin{align}
  \|\langle x\rangle^2\Op(\mfp)\phi\|_1
  \lesssim \|(1+x^4)\Op(\mfp)\phi\|_2\leq C \|\phi\|_{H_4^{\om}} <\infty,
  \label{eq.x2Opmfpbd}
\end{align}
we obtain for $j\in\{1,2\}$,
\begin{align*}
 D_s^j\mathcal{F}(\Op(\mfp)\phi)=(-i)^j \mathcal{F}(x^j\Op(\mfp)\phi)(s) 
\end{align*}
by interchanging differentiation and integration. Explicit calculations thus show 
\begin{align*}
  D_s \lambda\big(\tfrac sh\big)\mathcal{F}\big(\Op(\mfp)\phi\big)(s)
  &=
  \big(D_s\lambda\big(\tfrac sh\big)\big)\mathcal{F}\big(\Op(\mfp)\phi\big)(s) \\
  &\quad -i\lambda\big(\tfrac sh\big)
  \mathcal{F}\big(x\Op(\mfp)\phi\big)(s)
\end{align*}
and
\begin{align}
  D_s^2 \lambda\big(\tfrac sh\big)\mathcal{F}\big(\Op(\mfp)\phi\big)(s)
  &=
  \big(D_s^2\lambda\big(\tfrac sh\big)\big)\mathcal{F}\big(\Op(\mfp)\phi\big)(s) \notag \\
  &\quad -2i
  \big(D_s\lambda\big(\tfrac sh\big)\big)\mathcal{F}\big(x\Op(\mfp)\phi\big)(s) \notag \\
  &\quad -\lambda\big(\tfrac sh\big)
  \mathcal{F}\big(x^2\Op(\mfp)\phi\big)(s).
  \label{eq.Ds2expansion}
\end{align}
To finish the proof of \eqref{eq.sderivFquotL1bd} let us distinguish two cases, namely $(I)$ $\gamma \in \{0\} \cup [2,\infty)$ and $(II)$ $\gamma \in (1,2)$.

\medskip

{\it (I):} For $k=0,1,2$, $s\neq 0$, we see by elementary calculations, $\big|\langle s\rangle D_s^k\lambda\big(\tfrac sh\big)\big| \linebreak\leq Ch^{-r-\gamma}\langle s\rangle^{r+\gamma+1}$.  Using \eqref{eq.binomcomrel} and arguing similar as for \eqref{eq.L1boundk=0} we obtain (replacing $\phi$ by $x\phi$ or $x^2\phi$ if necessary) bounds of the $L^1$-norms which are of the correct order $\|\phi\|_{H_4^q}h^{-r-\gamma}$. 

\medskip

{\it (II):} In principal we use the same arguments as in $(I)$ but a singularity appears by expanding the first term on the r.h.s. of \eqref{eq.Ds2expansion}. In fact, it is sufficient to show that
\begin{align*}
  &\int_{-1}^1 \Big| \frac{D_s^2 \big|\tfrac sh\big|^\gamma \iota_s^{-\mu}}{\mathcal{F}(f_\epsilon)\big(-\tfrac sh\big)}
  \mathcal{F}\big(\Op(\mfp)\phi\big) (s) \Big| ds \\
  &\quad \leq C_l h^{-r-\gamma} \big\|\mathcal{F}\big(\Op(\mfp)\phi\big) \big\|_\infty
  \int_1^1 |s|^{\gamma-2} ds\\
  &\quad \lesssim C_l h^{-r-\gamma} \big\|\Op(\mfp)\phi \big\|_1\leq Ch^{-r-\gamma}\|\phi\|_{H_4^{\om}},
\end{align*}
where the last inequality follows from \eqref{eq.x2Opmfpbd}. Since this has the right order $h^{-r-\gamma}\|\phi\|_{H_4^q}$, \eqref{eq.sderivFquotL1bd} follows for $\gamma \in (1,2)$.

Together $(I)$ and $(II)$ prove \eqref{eq.sderivFquotL1bd}. Hence, we can apply partial integration twice and obtain for $t\neq u$,
\begin{align}
  (K_{t,h}^{\gamma,\overline m}\mfp)(u)=
  -\frac{h^{2-\om}}{(u-t)^2} \int e^{is(u-t)/h} \ D_s^2 \lambda\big(\tfrac sh\big) \mathcal{F}\big(\Op(\mfp)\phi\big)(s) \rqm s
  \label{eq.vthpartinttwice}
\end{align}
and similarly, first interchanging integration and differentiation,
\begin{align}
  D_u(K_{t,h}^{\gamma,\overline m}\mfp)(u)
  &=
  ih^{-\om-1}\int e^{is(u-t)/h} s\lambda\big(\tfrac sh\big) \mathcal{F}\big(\Op(\mfp)\phi\big)(s) \rqm s \notag \\
  &=
  -\frac{ih^{1-\om}}{(u-t)^2}
  \int e^{is(u-t)/h} D_s^2 s\lambda\big(\tfrac sh\big) \mathcal{F}\big(\Op(\mfp)\phi\big)(s) \rqm s
  \label{eq.vth_diff_partinttwice}
\end{align}

\medskip

{\it (i):} The estimates $|(K_{t,h}^{\gamma,\overline m}\mfp)(u)|\leq C\|\phi\|_{H_4^q}h^{-m-r}$ and $|(K_{t,h}^{\gamma,\overline m}\mfp)(u)| \linebreak\leq C \|\phi\|_{H_4^q} h^{2-m-r}/(u-t)^2$ follow directly from \eqref{eq.L1boundk=0} as well as \eqref{eq.vthpartinttwice} together with the $L^1$ bound of \eqref{eq.sderivFquotL1bd} for $k=2$.

{\it (ii):} To prove $|(K_{t,h}^{\gamma,\overline m}\mfp)(u)-(K_{t,h}^{\gamma,\overline m}\mfp)(u')|\leq C\|\phi\|_{H_4^q}h^{-m-r-1}|u-u'|$ it is enough to note that $|e^{ix}-e^{iy}|\leq |x-y|$. The result then follows from \eqref{eq.L1boundk=0} again. For the second bound, see \eqref{eq.vth_diff_partinttwice}. The estimate for the $L^1$-norm of \eqref{eq.sderivFquotL1bd} with $k=2$ completes the proof.
\end{proof}

\begin{lem2}
\label{lem.vthL2}
Work under the assumptions of Theorem \ref{thm.msc}. If $v_{t,h}$ is given as in \eqref{eq.vthdef}, then,
\begin{align*}
  \|v_{t,h}\|_2\gtrsim h^{1/2-m-r}.
\end{align*}
\end{lem2}

\begin{proof}
We only discuss the case $\gamma>0$. If $\gamma=0$ the proof can be done similarly. It follows from the definition that
\begin{align*}
  \|v_{t,h}\|_2^2
  &=\int \frac{1+|s|^{2\gamma}}{|\mathcal{F}(f_\epsilon)(-s)|^2}
  \big|\mathcal{F}\big(\Op(a^\star)(\phi\circ S_{t,h})\big)(s)\big|^2 ds \\
  &\quad -
  \Big\|\frac{\mathcal{F}\big(\Op(a^\star)(\phi\circ S_{t,h})\big)}{\mathcal{F}(f_\epsilon)(-\cdot)}\Big\|_2^2.
\end{align*}
Since the adjoint is given by $a^\star(x,\xi)=e^{\partial_x\partial_\xi}\overline{a}(x,\xi)$ in the sense of asymptotic summation, it follows immediately that $a^\star(x,\xi)=\overline a(x,\xi)+r(x,\xi)$ with $r\in S^{\om-1}$. From this we conclude that $\Op(a^\star)$ is an elliptic pseudo-differential operator. Because of $a^\star\in S^{\overline m}$ and ellipticity there exists a so called left parametrix $(a^\star)^{-1}\in S^{-\overline m}$ such that $\Op((a^\star)^{-1})\Op(a^\star)=1+\Op(a')$ and $a'\in S^{-\infty}$, where $S^{-\infty}=\bigcap_m S^m$ (cf. Theorem 18.1.9 in H\"ormander \cite{hoer}). In particular, $a'\in S^{-1}$. Moreover, $\Op((a^\star)^{-1}):H^{r+\gamma}\rightarrow H^{r+m}$ is a continuous and linear and therefore bounded operator (cf. Lemma \ref{lem.unifemb}). Introduce the function $Q=(\cdot \vee 0)^2.$ Furthermore, by convexity, $1+|s|^{2\gamma}\geq 2^{-\gamma}\langle s\rangle^{2\gamma}$ and there exists a finite constant $c>0$ such that
\begin{align*}
  \int &\frac{1+|s|^{2\gamma}}{|\mathcal{F}(f_\epsilon)(-s)|^2}
  \big|\mathcal{F}\big(\Op(a^\star)(\phi\circ S_{t,h})\big)(s)\big|^2 ds \\
  &\geq 2^{-\gamma} C_l^2
  \big \|\Op(a^\star)(\phi\circ S_{t,h})\|_{H^{r+\gamma}}^2 \\
  &\gtrsim
  \|\Op((a^\star)^{-1})\Op(a^\star)(\phi\circ S_{t,h})\|_{H^{r+m}}^2 \\
  &=\|(1+\Op(a'))(\phi\circ S_{t,h})\|_{H^{r+m}}^2\\
  &\geq Q\big(\|\phi \circ S_{t,h}\|_{H^{r+m}}
  - \|\Op(a')(\phi \circ S_{t,h})\|_{H^{r+m}}\big) \\
  &\geq
  Q\big(\|\phi \circ S_{t,h}\|_{H^{r+m}}
  -c \|\phi \circ S_{t,h}\|_{H^{r+m-1}}\big)
  \\
  &\geq h\int \big(1+\big| \tfrac sh \big |^2\big)^{m+r} \big|\mathcal{F}(\phi)(s)\big|^2 ds+O(h^{2(1-r-m)}) \\
  &\geq h^{1-2(r+m)} \int |s|^{2m+2r}\big|\mathcal{F}(\phi)(s)\big|^2ds
  +O(h^{2(1-r-m)}).
\end{align*}
On the other hand, we see immediately that
\begin{align*}
  \Big\|\frac{\mathcal{F}\big(\Op(a^\star)(\phi\circ S_{t,h})\big)}{\mathcal{F}(f_\epsilon)(-\cdot)}\Big\|_2^2
  &\lesssim 
  \big\|\Op(a^\star)(\phi\circ S_{t,h})\big\|_{H^r}^2 \\
  &\lesssim \|\phi\circ S_{t,h}\|_{H^{r+\om}}^2
  \lesssim h^{1-2(r+\om)}.
\end{align*}
Since $\phi \in L^2$ and $h$ tends to zero the claim follows. 
\end{proof}

\begin{lem2}[D\"umbgen, Spokoiny \cite{due2}, p.145]
\label{lem.phiL2}
Suppose that $\supp \psi \subset [0,1]$ and $\TV(\psi)<\infty$. If $(t,h), (t',h') \in \mT$, then
\begin{align*}
  \big\|\psi\big(\tfrac{\cdot-t}h\big)
  -\psi\big(\tfrac{\cdot-t'}{h'}\big)\big\|_2^2
  \leq 2\TV(\psi)^2\big(|h-h'|+|t-t'|\big).
\end{align*}
\end{lem2}

Let $\lceil x\rceil $ be the smallest integer which is not smaller than $x$.

\begin{lem2} 
\label{lem.vthvt'h'}
Let $0\leq \ell\leq 1/2$ and $q\geq0$. Assume that $\phi \in H^{\lceil q \rceil}\cap H^{q+\ell}$, $\supp \phi \subset [0,1]$ and $\TV(D^{\lceil q \rceil}\phi)<\infty$. Then, for $h\leq h'$,
\begin{align*}
  \|\phi\circ S_{t,h}-\phi\circ S_{t',h'}\|_{H^q}\lesssim h^{-q} \sqrt{|t-t'|^{2\ell}+|h'-h|}.
\end{align*}
In particular, for $\phi\in H^{\lceil r+m \rceil}\cap H^{r+m+1/2}$, $\supp \phi \subset [0,1]$ and $\TV(D^{\lceil r+m \rceil}\phi) \linebreak<\infty$, $h\leq h'$,
\begin{align*}
  \|v_{t,h}-v_{t',h'}\|_2 \lesssim h^{-r-m} \sqrt{|t-t'|+|h'-h|}.
\end{align*}
\end{lem2}

\begin{proof}
Since 
\begin{align*}
  &\big\|\phi\circ S_{t,h}-\phi\circ S_{t',h'}\big\|_{H^q}^2 \\
  &\quad \lesssim  
  \int \langle s\rangle^{2q} \big|1-e^{is(t-t')}\big|^2 \big|\mathcal{F}\big(\phi\big(\tfrac{\cdot}h\big)\big)(s)\big|^2 ds
  +\big\|\phi\big(\tfrac{\cdot}h\big)-\phi\big(\tfrac{\cdot}{h'}\big)\big\|_{H^{q}}^2
\end{align*}
and $|1-e^{is(t-t')}|\leq 2\min(|s||t-t'|,1)\leq 2\min(|s|^\ell|t-t'|^\ell,1)\leq 2|s|^\ell|t-t'|^\ell$, we obtain
\begin{align*}
  \big\|\phi\circ S_{t,h}-\phi\circ S_{t',h'}\big\|_{H^{q}}^2
  \lesssim |t-t'|^{2\ell} h^{1-2q-2\ell}+\big\|\phi\big(\tfrac{\cdot}h\big)-\phi\big(\tfrac{\cdot}{h'}\big)\big\|_{H^{q}}^2
\end{align*}
(note that $\phi \in H^{q+\ell}$). Set $k=\lceil q\rceil$. Then
\begin{align*}
 \big\|\phi\big(\tfrac{\cdot}h\big)-\phi\big(\tfrac{\cdot}{h'}\big)\big\|_{H^{q}}^2
  &\lesssim
  h^{1-2q} 
  \big\|\phi-\phi\big(\tfrac{h}{h'}\cdot\big)\big\|_{H^{q}}^2 \\
  &\lesssim
  h^{1-2q} 
  \big\|\phi-\phi\big(\tfrac{h}{h'}\cdot\big)\big\|_2^2
  +h^{1-2q} \big\|D^k\big(\phi-\phi\big(\tfrac{h}{h'}\cdot\big)\big)\big\|_2^2.
\end{align*}
For $j\in \{0,k\}$,
\begin{align*}
  \big\|D^j\big(\phi-\phi\big(\tfrac{h}{h'}\cdot\big)\big)\big\|_2^2
  &\leq 2\big\|\phi^{(j)}-\phi^{(j)}\big(\tfrac{h}{h'}\cdot\big)\big\|_2^2
  +2\big(1-\big(\tfrac h{h'}\big)^j\big)^2 \big\|\phi^{(j)}\big(\tfrac h{h'}\cdot\big)\big\|_2^2 \\
  &\lesssim
  h^{-1} \big\|\phi^{(j)}\big(\tfrac{\cdot}{h}\big)-\phi^{(j)}\big(\tfrac{\cdot}{h'}\big)\big\|_2^2
  +|h'-h| \ h^{-1}\|\phi^{(j)}\|_2^2.
\end{align*}
Now, application of Lemma \ref{lem.phiL2} completes the proof for the first part. The second claim follows from
\begin{align*}
  \|v_{t,h}-v_{t',h'}\|_2^2
  &=\int |\lambda(s)|^2 \big|\mathcal{F}\big(\Op(a^\star)(\phi\circ S_{t,h}-\phi\circ S_{t',h'}))\big)(s)\big|^2 ds \\
  &\lesssim 
  \big\|\phi\circ S_{t,h}-\phi\circ S_{t',h'}\big\|_{H^{r+m}}^2.
\end{align*}
\end{proof}

\begin{lem2}
 \label{lem.Atthh}
Let $A_{t,t',h,h'}$ be defined as in \eqref{eq.defAtthh} and work under Assumption \ref{as.testfcts}. Then, for a global constant $K>0$, 
\begin{align*}
  A_{t,t',h,h'}\leq K \sqrt{|t-t'|+|h-h'|}.
\end{align*}
\end{lem2}

\begin{proof}
Without loss of generality, assume that for fixed $(t,h)$, $V_{t,h}\geq V_{t',h'}$. We can write
\begin{align*}
  A_{t,t',h,h'}
  &\leq \frac{\|\psi_{t,h}\sqrt h -\psi_{t',h'}\sqrt{h'}\|_2}
  {V_{t,h}}
  + \sqrt {h'} \|\psi_{t',h'}\|_2
  \Big|\frac 1{V_{t,h}}-\frac 1{V_{t',h'}}\Big| \\
  &\leq \frac{\|\psi_{t,h}\sqrt h -\psi_{t',h'}\sqrt{h'}\|_2}
  {V_{t,h}}
  +
  \sqrt{h'} \frac{|V_{t,h}-V_{t',h'}|}
  {V_{t,h}}.
\end{align*}
By triangle inequality, $\|\psi_{t,h}\sqrt{h}-\psi_{t',h'}\sqrt{h'}\|_2\leq \sqrt{h'}\|\psi_{t,h}-\psi_{t',h'}\|_2+|\sqrt{h}-\sqrt{h'}| \ \|\psi_{t,h}\|_2$. Thus,
\begin{align*}
  A_{t,t',h,h'}
  \leq 
  \frac{\sqrt {h'}}{V_{t,h}}
  \Big(\|\psi_{t,h}-\psi_{t',h'}\|_2+|V_{t,h}-V_{t',h'}|\Big)+\sqrt{|h-h'|}.
\end{align*}
If $h'\leq h$, then the result follows by Assumption \ref{as.testfcts} (iv) and some elementary computations. Otherwise we can estimate $\sqrt {h'} \leq \sqrt{|h-h'|}+\sqrt{h}$ and so
\begin{align*}
  A_{t,t',h,h'}
  \leq 
  \frac{\sqrt {h}}{V_{t,h}}
  \Big(\|\psi_{t,h}-\psi_{t',h'}\|_2+|V_{t,h}-V_{t',h'}|\Big)+5\sqrt{|h-h'|}.
\end{align*}
\end{proof}

\begin{rem}
\label{rem.ualpha_ids}
For the proofs of the subsequent lemmas, we make often use of elementary facts related to the function $\langle \cdot \rangle^\alpha\in S^\alpha$ with $0<\alpha <1$. Note that for $t\in[0,1]$, $D_u\langle u\rangle^\alpha \leq \alpha \langle u\rangle^{\alpha-1} \in S^{\alpha-1}$, $D_u\langle u\rangle^\alpha\leq \alpha$,
\begin{align}
  \langle u\rangle^\alpha \leq \frac 12 (1+|u|^\alpha)\leq 1+|u-t|^\alpha, 
  \quad \text{and} \ \ \ 
  \langle u\rangle^{\alpha-1}\leq 2|u-t|^{\alpha-1},
\end{align}
where the last inequality follows from $|u-t|^{1-\alpha}\langle u\rangle^{\alpha-1}\leq |u|^{1-\alpha}\langle u\rangle^{\alpha-1}+1\leq2$. 
\end{rem}

\begin{lem2}
\label{lem.rthlem}
For $(t,h)\in \mT$ let $r_{t,h}$ be a function satisfying the conclusions of Lemma \ref{lem.Kthestimates} for $r,m$ and $\phi$. Assume $1/2<\alpha<1$. Then, there exists a constant $K$ independent of $(t,h)\in \mT$ and $\phi$ such that
\begin{align*}
  \big |r_{t,h}(u)\langle u\rangle^\alpha-r_{t,h}(u')\langle u'\rangle^\alpha \big|
  \leq K\|\phi\|_{H_4^q}h^{1-m-r} \Big| \int_{u'}^u \frac1{(x-t)^{2-\alpha}}+\frac 1{(x-t)^2} dx \Big|,
\end{align*}
for all \ $u,u'\neq t$ and
\begin{align*}
  \TV\big(r_{t,h}\langle \cdot \rangle^\alpha\mathbb{I}_{[t-1,t+1]}\big)&\leq K\|\phi\|_{H_4^q}h^{-m-r}, \quad \\
  \TV\big(r_{t,h}\langle \cdot \rangle^\alpha\mathbb{I}_{\mathbb{R}\setminus [t-1,t+1]}\big)&\leq K\|\phi\|_{H_4^q}h^{1-m-r}.
\end{align*}

\end{lem2}

\begin{proof}
Let $C$ be as in Lemma \ref{lem.Kthestimates}. In this proof $K=K(\alpha,C)$ denotes a generic constant which may change from line to line. Without loss of generality, we may assume that $|u-t|\geq |u'-t|$. Furthermore, the bound is trivial if $u'\leq t\leq u$ or $u\leq t\leq u'$. Therefore, let us assume further that $u\geq u'>t$ (the case $u\leq u'<t$ can be treated similarly). Together with the conclusions from Lemma \ref{lem.Kthestimates} and Remark \ref{rem.ualpha_ids} this shows that
\begin{align*}
  &\big|r_{t,h}(u)
  \langle u \rangle^\alpha
  -r_{t,h}(u')\langle u' \rangle^\alpha\big| \\
  &\leq \big|r_{t,h}(u)\big| \ \big| \langle u \rangle^\alpha- \langle u' \rangle^\alpha\big|
  +  \langle u' \rangle^\alpha \big|r_{t,h}(u)
  -r_{t,h}(u')\big| \\
  &\leq K\|\phi\|_{H_4^q} \Big[h^{2-m-r}\frac 1{(u-t)^2}
  +h^{1-m-r}\frac{|u'-t|^\alpha +1}{|u'-t| \ |u-t|}\Big]|u-u'|.
\end{align*}
Clearly, the second term in the bracket dominates uniformly over $h\in (0,1]$. By Taylor expansion
\begin{align*}
  \frac{|u-u'|}{|u'-t|^{1-\alpha} \ |u-t|}
  &=\frac{u-u'}{(u-t)^\alpha (u'-t)^{1-\alpha} (u-t)^{1-\alpha}} \\
  &\leq \frac{(u-t)^{1-\alpha}-(u'-t)^{1-\alpha}}{(1-\alpha)(u'-t)^{1-\alpha}(u-t)^{1-\alpha}}
  =\int_{u'}^u \frac1{(x-t)^{2-\alpha}} dx.
\end{align*}
Hence,
\begin{align*}
  \frac{1}{|u'-t| \ |u-t|}|u-u'|=\big|\int_{u'}^u \frac 1{(x-t)^2} dx\big|
\end{align*}
completes the proof for the first part. For the second part decompose $r_{t,h}\mathbb{I}_{[t-1,t+1]}$ in $r_{t,h}^{(1)}=r_{t,h}\mathbb{I}_{[t-h,t+h]}$ and $r_{t,h}^{(2)}=r_{t,h}\mathbb{I}_{[t-1,t+1]}-r_{t,h}^{(1)}$. Observe that the conclusions of Lemma \ref{lem.Kthestimates} imply
\begin{align*}
  \TV\big(r_{t,h}^{(1)}\langle \cdot \rangle^\alpha\big)
  &\leq \|\langle \cdot \rangle^\alpha\mathbb{I}_{[t-h,t+h]}\|_\infty
  \TV(r_{t,h}^{(1)})
  +\TV\big(\langle \cdot \rangle^\alpha\mathbb{I}_{[t-h,t+h]}\big)\|r_{t,h}^{(1)}\|_\infty \\
  &\leq K\|\phi\|_{H_4^q} h^{-m-r}.
\end{align*}
By using the first part of the lemma, we conclude that uniformly in $(t,h)\in \mT$,
\begin{align*}
  \TV\big(r_{t,h}\langle\cdot \rangle^\alpha\mathbb{I}_{[t-1,t+1]}\big)
  &\leq \TV\big(r_{t,h}^{(1)}\langle\cdot \rangle^\alpha\big)
  +\TV\big(r_{t,h}^{(2)}\langle\cdot \rangle^\alpha\big) \\
  &\lesssim K\|\phi\|_{H_4^q}( h^{-m-r}+h^{-m-r})
\end{align*}
and also $\TV\big(r_{t,h}\langle \cdot \rangle^\alpha\mathbb{I}_{\mathbb{R}\setminus [t-1,t+1]}\big)\leq K\|\phi\|_{H_4^q} h^{1-m-r}$.
\end{proof}

\begin{lem2}
\label{lem.dthapprox}
Work under Assumptions \ref{ass.noise} and \ref{as.Ffepsilonapprox} and suppose that $m+r>1/2$, $\langle x \rangle \phi \in L^1$, and $\phi \in H_1^{m+r+1}$. Let $d_{t,h}$ be as defined in \eqref{eq.dthdef}. Then, there exists a constant $K$ independent of $(t,h)\in \mT$, such that for $1/2<\alpha <1$,
$$
	\TV(d_{t,h}\langle \cdot \rangle^\alpha \mathbb{I}_{[t-1,t+1]})\leq Kh^{\beta_0\wedge (m+r)-r}\log \big(\tfrac 1h\big).
$$
\end{lem2}

\begin{proof}
For convenience let $\beta_0^\star:=\beta_0\wedge (m+r)$ and substitute $s\mapsto -s$ in \eqref{eq.dthdef}, i.e.
\begin{align*}
    d_{t,h}(u):= \int e^{-is(u-t)/h} \Big(\frac 1{\mathcal{F}(f_\epsilon)(\tfrac sh)}-A\iota_s^{\rho}\big|\tfrac{s}h\big|^r\Big)\iota_s^{\mu}|s|^m \mathcal{F}(\phi)(-s) \rqm s.
\end{align*}
Define
\begin{align*}
  F_h(s):= \frac 1{\mathcal{F}(f_\epsilon)(\tfrac sh)}-A\iota_s^{\rho}\big|\tfrac{s}h\big|^r.
\end{align*}
By Assumptions \ref{ass.noise} and \ref{as.Ffepsilonapprox}, we can bound the $L^1$-norm of 
\begin{align}
 s\mapsto \langle s\rangle F_h(s)\iota_s^{\mu}|s|^m \mathcal{F}(\phi)(-s)
  \label{eq.part1dthapprox}
\end{align}
uniformly in $(t,h)$ by $\int \langle s\rangle \langle \tfrac sh \rangle^{r-\beta_0} |s|^m \big | \mathcal{F}(\phi)(-s) \big| ds$. Bounding $\langle \tfrac sh \rangle^{r-\beta_0}$ by $\langle \tfrac sh \rangle^{r-\beta_0^\star}$ and considering the cases $r\leq \beta_0^\star$ and $r> \beta_0^\star$ separately, we find $h^{\beta_0^\star-r}\int \langle s\rangle ^{1+r+m-\beta_0^\star} |\mathcal{F}(\phi)(-s)| ds
 \lesssim h^{\beta_0^\star-r}\|\phi\|_{H^{r+m+1}}$ as an upper bound for \eqref{eq.part1dthapprox}, uniformly in $(t,h)\in \mathcal{T}$. Furthermore, 
\begin{align*}
  D_s F_h(s) = - \frac{ D_s \mathcal{F}(f_\epsilon)(\tfrac sh)}{\big(\mathcal{F}(f_\epsilon)(\tfrac sh)\big)^2} - Ar i\iota_s^{\rho-1}h^{-1} \big|\tfrac{s}h\big|^{r-1} 
\end{align*}
and by Assumptions \ref{ass.noise} and \ref{as.Ffepsilonapprox}, 
\begin{align*}
  \Big|sD_s F_h(s) \Big|
  &\leq \big|sD_s \mathcal{F}(f_\epsilon)(\tfrac sh) \big|
  \Big|A^2\iota_s^ {2\rho}\big|\tfrac{s}h\big|^{2r}-\frac{1}{\big(\mathcal{F}(f_\epsilon)(\tfrac sh)\big)^2}\Big| \\
  &\quad +
  |A| r \big|\tfrac{s}h\big|^{r} 
  \Big|-A  (ri)^{-1}\iota_s^{\rho+1}h\big|\tfrac{s}h\big|^{r+1} D_s\mathcal{F}(f_\epsilon)\big(\tfrac sh\big)
  - 1\Big| \\
  &\lesssim \Big( \big|\tfrac sh\big|\big\langle \tfrac sh\big\rangle^{r-1}+\big|\tfrac sh\big|^{r}\Big) \big\langle \tfrac sh\big\rangle^{-\beta_0}
  \leq 2\big\langle \tfrac sh\big\rangle^{r-\beta_0^\star}.
\end{align*}
Similarly as above, we can conclude that the $L^1$-norm of 
\begin{align*}
 s\mapsto D_s sF_h(s)\iota_s^{\mu}|s|^m\mathcal{F}(\phi)(-s) 
\end{align*}
is bounded by $\mathrm{const.} \times h^{\beta_0^\star-r}\|\phi\|_{H_1^{r+m+1}}$, uniformly over all $(t,h)\in \mT$. Therefore, we have by interchanging differentiation and integration first and partial integration,
\begin{align*}
  D_u d_{t,h}(u)
  &=\frac {-i}h \int se^{-is(u-t)/h}F_h(s)\iota_s^{\mu}|s|^m \mathcal{F}(\phi)(-s) \rqm s \\
  &=\frac {-1}{u-t}
  \int e^{-is(u-t)/h}D_s sF_h(s)\iota_s^{\mu}|s|^m \mathcal{F}(\phi)(-s) \rqm s
\end{align*}
and the second equality holds for $u\neq t$. Together with \eqref{eq.part1dthapprox} this shows that $|d_{t,h}(u)|\lesssim h^{\beta_0^\star-r}$ and $|D_ud_{t,h}(u)|\lesssim h^{\beta_0^\star-r-1}\min(1,h/|u-t|)$. Using Remark \ref{rem.ualpha_ids} we find for the sets $A_{t,h}^{(1)}:=[t-h,t+h]$ and $A_{t,h}^{(2)}:=[t-1,t+1]\setminus A_{t,h}^{(1)}$,
\begin{align*}
  \TV(d_{t,h}\mathbb{I}_{[t-1,t+1]})
  &\leq 2\|d_{t,h}\|_\infty
  + \int_{A_{t,h}^{(1)}}|D_ud_{t,h}(u) |du
  +\int_{A_{t,h}^{(2)}}|D_ud_{t,h}(u) |du \\
  &\lesssim h^{\beta_0^\star-r}\log\big(\tfrac 1h\big).
\end{align*}
Thus, $\TV(d_{t,h}\langle \cdot \rangle^\alpha \mathbb{I}_{[t-1,t+1]}) \lesssim \|d_{t,h}\|_\infty +\TV(d_{t,h}\mathbb{I}_{[t-1,t+1]})\lesssim h^{\beta_0^\star-r}\log\big(\tfrac 1h\big)$.
\end{proof}

\begin{lem2}
\label{lem.TVvthPlem}
Work under the assumptions of Theorem \ref{thm.speciallimits} and let $v_{t,h}^P$ be defined as in \eqref{eq.vthPdef}. Then, for $1/2<\alpha<1$,
\begin{align*}
  \TV(v_{t,h}^P\langle \cdot \rangle^\alpha \mathbb{I}_{\mathbb{R}\setminus [t-1,t+1]})\leq Kh^{1-r-m},
\end{align*}
where the constant $K$ does not depend on $(t,h)$.
\end{lem2}

\begin{proof}
The proof uses essentially the same arguments as the proof of Lemma \ref{lem.Kthestimates}. Let $q:=\lfloor r+m+5/2\rfloor$ and recall that by assumption $\langle x\rangle^2\phi \in L^1$. Decomposing the $L^1$-norm on $\mathbb{R}$ into $L^1([-1,1])$ and $L^1(\mathbb{R}\setminus[-1,1]),$ using Cauchy-Schwarz inequality, and $\|\mathcal{F}(\phi)\|_\infty \leq \|\phi\|_1$, we see that for $j\in \{0,1\}$, the $L^1$-norm of $s\mapsto D_s^{j} |s|^{r+m}\iota_s^{-\rho-\mu} \mathcal{F}(\phi)(s)$ is bounded by $\mathrm{const.}\times(\|\phi\|_{H_1^q}+\|\phi\|_{1})$. Similarly, for $k\in \{0,1,2\}$ the $L^1$-norms of $s\mapsto D_s^k |s|^{r+m+1}\iota_s^{-\rho-\mu+1}\mathcal{F}(\phi)(s)$ are bounded by a multiple of $\|\phi\|_{H_2^q}+\|\phi\|_{1}$. Hence we have
\begin{align*}
  v_{t,h}^P(u)=\frac{Ah^{1-r-m}ia_P(t)}{u-t}
  \int e^{is(u-t)/h}D_s|s|^{r+m}\iota_s^{-\rho-\mu} \mathcal{F}(\phi)(s) \rqm s
\end{align*}
and 
\begin{align*}
  D_u v_{t,h}^P(u)=\frac{-Ah^{1-r-m}a_P(t)}{(u-t)^2}
  \int e^{is(u-t)/h}D_s^2|s|^{r+m+1}\iota_s^{-\rho-\mu+1} \mathcal{F}(\phi)(s) \rqm s.
\end{align*}
Together with Remark \ref{rem.ualpha_ids} this shows that
\begin{align*}
  \TV\big(v_{t,h}^P\langle \cdot\rangle^\alpha\mathbb{I}_{[t+1,\infty)}\big)
  &\leq \|v_{t,h}^P\langle \cdot\rangle^\alpha\mathbb{I}_{[t+1,\infty)}\|_\infty
  +\int_{t+1}^\infty |D_uv_{t,h}^P(u)\langle \cdot\rangle^\alpha| du \\
  &\lesssim h^{1-r-m}+
  \int_{t+1}^\infty\frac{h^{1-r-m}}{|u-t|^{2-\alpha}}+\frac{h^{1-r-m}}{|u-t|^{2}} du
  \lesssim h^{1-r-m}.
\end{align*}
Similarly we can bound the total variation on $(-\infty, t-1]$.
\end{proof}

The next lemma extends a well-known bound for functions with compact support to general c\`{a}dl\`{a}g functions. We found this result useful for estimating the supremum over a Gaussian process if entropy bounds are difficult.

\begin{lem2}
\label{lem.empprocbd}
Let $(W_t)_{t\in \mathbb{R}}$ denote a two-sided Brownian motion. For a class of real-valued c\`{a}dl\`{a}g functions $\mathcal{F}$ and any $\alpha>1/2$ there exists a constant $C_\alpha$ such that
\begin{align*}
  \sup_{f\in \mathcal{F}} \big|\int f(s) dW_s \big|
  \leq C_\alpha \sup_{s\in [0,1]} |\overline W_s| \sup_{f\in \mathcal{F}} \TV(\langle \cdot \rangle^\alpha f),
\end{align*}
where $\overline W$ is a standard Brownian motion on the same probability space.
\end{lem2}

\begin{proof}
The proof consists of two steps. First suppose that $\bigcup_{f\in \mathcal{F}} \supp f \subset [0,1]$ and assume that the $f$ are of bounded variation. Then, for any $f\in \mathcal{F}$, there exists a function $q_f$ with $\|q_f\|_\infty \le \TV(f)$ and a probability measure $P_f$ with $P_f[0,1[ = 1$, such that $f(u)=\int_{[0,u]} q_f(u) P_f(du)$ for all $u\in \mathbb{R}$, because $f$ is c\`{a}dl\`{a}g and thus $f(1)=0$. With probability one,
\begin{align*}
  \sup_{f\in \mathcal{F}} \big|\int f(s)dW_s\big| = \sup_{f\in \mathcal{F}}
  \Big| \int W_s q_f(s) P_f(ds)\Big|
  \leq \sup_{s\in [0,1]} |W_s| \ \sup_{f\in \mathcal{F}} \TV(f).
\end{align*}

Now let us consider the general case. If $C_\alpha:=\|\langle \cdot\rangle^{-\alpha}\|_2$ then $h(s)= C_\alpha^{-2}\langle s\rangle^{-2\alpha}$ is a density of a random variable. Let $H$ be the corresponding distribution function. Note that
\begin{align*}
  \big(\overline W_t\big)_{t\in [0,1]}
  =
  \Big(\int_0^t \sqrt{h(H^{-1}(s))}dW_{H^{-1}(s)}\Big)_{t\in [0,1]}
\end{align*}
is a standard Brownian motion satisfying $d\overline W_{H(s)}=\sqrt{h(s)}dW_s$ and thus with $Af=\langle \cdot \rangle^\alpha f,$
\begin{align*}
  \sup_{f\in \mathcal{F}} \big|\int f(s)dW_s\big|
  &=C_\alpha \sup_{f\in \mathcal{F}}  \big| \int Af(s) d\overline W_{H(s)} \big| \\
  &=C_\alpha \sup_{f\in \mathcal{F}}  \big| \int_0^1 Af(H^{-1}(s)) d\overline W_{s} \big|.
\end{align*}
Since $\TV(Af\circ H^{-1})= \TV(Af)$ the result follows from the first part.
\end{proof}

In the next lemma, we study monotonicity properties of the calibration weights $w_h$.

\begin{lem2}
\label{lem.delta1_tech}
For $h \in (0,1]$ and $\nu > e$ let $w_h := \sqrt{2^{-1}\log(\nu/h)}/\log \log(\nu/h)$. Then
\begin{itemize}
\item[(i)] $h \mapsto w_h$ is strictly decreasing on $\bigl( 0, \nu \exp(e^{-2}) \bigr]$, \ and
\item[(ii)] $h \mapsto w_h h^{1/2}$ is strictly increasing on $(0,1]$.
\end{itemize}
\end{lem2}

\begin{proof}
With $x = x(h) := \log\log(\nu/h) > 0$, we have $\log w_h = -\log(2)/2 + x/2 - \log x$. Since the derivative of this w.r.t.\ $x$ equals $1/2 - 1/x$ and is strictly positive for $x > 2$, we conclude that $\log w_h$ is strictly increasing for $x(h) \ge 2$, i.e.\ $h \le \nu \exp(e^{-2})$. Moreover, $\log(w_h h^{1/2}) = \log(\nu/2)/2 + x/2 - \log x - e^x/2$, and the derivative of this w.r.t.\ $x > 0$ equals $1/2 - 1/x - e^x/2 < 0$. Thus, $w_h h^{1/2}$ is strictly increasing in $h \in (0,1]$.
\end{proof}

\begin{lem2}
\label{lem.condTVreplace}
Condition (iii) in Assumption \ref{as.testfcts} is fulfilled with $\kappa_n=w_{u_n}u_n^{1/2}$ whenever Condition $(ii)$ of Assumption \ref{as.testfcts} holds, and for all $(t,h)\in B_n$, $\supp \psi_{t,h} \subset [t-h,t+h]$.
\end{lem2}

\begin{proof}
Let $1/2<\alpha<1$. Then $\langle \cdot \rangle^\alpha : \mathbb{R} \rightarrow \mathbb{R}$ is Lipschitz. Recall that $\TV(fg)\leq \|f\|_\infty \TV(g)+\|g\|_\infty \TV(f)$. Since $\bigcup_{(t,h) \in B_n} \supp \psi_{t,h} \subset [-1,2]$ is bounded and contains the support of all functions $s\mapsto \psi_{t,h}(s) \big[\sqrt{g(s)}-\sqrt{g(t)}\big]\langle s \rangle^\alpha$ (indexed in $(t,h)\in B_n$), we obtain uniformly over $(t,h)\in B_n$ and $G\in \mathcal{G}$,
\begin{align*}
  &\TV\Big(\psi_{t,h}(\cdot) \big[\sqrt{g(\cdot)}-\sqrt{g(t)}\big]\langle \cdot \rangle^\alpha\Big) \\
  &\quad \lesssim
  \big\|\psi_{t,h}(\cdot) \big[\sqrt{g(\cdot)}-\sqrt{g(t)}\big]\big\|_\infty
  + \TV\Big(\psi_{t,h}(\cdot) \big[\sqrt{g(\cdot)}-\sqrt{g(t)}\big]\Big)
\end{align*}
Furthermore, 
\begin{align*}
  \TV\Big(\psi_{t,h}(\cdot) \big[\sqrt{g(\cdot)}-\sqrt{g(t)}\big]\Big)
  &\leq \|\psi_{t,h}\|_\infty \TV\big(\big[\sqrt{g(\cdot)}-\sqrt{g(t)}\big]\mathbb{I}_{[t-h,t+h]}(\cdot)\big) \\
  &\quad +\TV\big(\psi_{t,h}\big)\big\|\big[\sqrt{g(\cdot)}-\sqrt{g(t)}\big]\mathbb{I}_{[t-h,t+h]}(\cdot)\big\|_\infty \\
  &\lesssim V_{t,h} h^{1/2},
\end{align*}
where the last inequality follows from Assumption \ref{as.testfcts} (ii) as well as the properties of $\mathcal{G}$. With Lemma \ref{lem.delta1_tech} (ii) the result follows.
\end{proof}

\section{Further results on multiscale statistics}

The following result shows that multiscale statistics computed over sufficiently rich index sets $B_n$ are also bounded from below.

\begin{lem2}
\label{lem.limitlimit}
Assume that $K_n\rightarrow \infty$, $\psi_{t,h}=\psi\big(\tfrac{\cdot-t}h\big)$ and $V_{t,h}=\|\psi_{t,h}\|_2=\sqrt{h} \|\psi\|_2$. Suppose that $\lim_{j\rightarrow \infty} \log(j) |\int \psi(s-j)\psi(s) ds| \rightarrow 0$.
%
%
Then, with $w_h$ and $B_{K_n}^\circ$ as defined in \eqref{eq.whdef_first} and \eqref{eq.defBKncirc}, respectively,
\begin{align*}
  \sup_{(t,h)\in B_{K_n}^\circ}
  w_h \left(
  \frac{\big|\int \psi_{t,h}(s)dW_s \big|}{\|\psi_{t,h}\|_2}-\sqrt{2\log \tfrac \nu h }\right) \rightarrow -\frac 14, \quad \text{in probability.}
\end{align*}
\end{lem2}

\begin{proof}
Write $K:=K_n$ and let $\xi_{j}:=\|\psi_{t,h}\|_2^{-1} \int \psi_{j/K,1/K}(s) dW_s$ for $j=0,\ldots,K-1$. Now, $(\xi_{j})_{j}$ is a stationary sequence of centered and standardized normal random variables. In particular the distribution of $(\xi_{j})_{j}$ does not depend on $K$ and the covariance decays by assumption at a faster rate than logarithmically. By Theorem 4.3.3 (ii) in \cite{lead} the maximum behaves as the maximum of $K$ independent standard normal r.v., i.e.
\begin{align*}
  \P\big(\max(\xi_1,\ldots,\xi_K)\leq a_K+b_K t\big) \rightarrow \exp\big(-e^{-t}\big), \quad \text{for} \ t\in \mathbb{R} \ \text{and} \ K\rightarrow \infty,
\end{align*}
where 
\begin{align*}
  b_K:=\frac 1{\sqrt{2\log K}}, \ \ \ \text{and} \ \ \ 
  a_K=\sqrt{2\log K}-\frac{\log\log K+\log(4\pi)}{\sqrt{8\log K}}.
\end{align*}
Using the tail-equivalence criterion (cf.\ \cite{emb}, Proposition 3.3.28), we obtain further
\begin{align*}
  \lim_{K\rightarrow \infty}\P\big(\max(|\xi_1|,\ldots,|\xi_K|)\leq a_K+b_K (t+\log 2)\big) = \exp\big(-e^{-t}\big), \quad \text{for} \ t\in \mathbb{R}.
\end{align*}
Note that $T_n^\circ:=\sup_{(t,h)\in B_n^\circ}w_h (\|\psi_{t,h}\|_2^{-1}|\int \psi_{t,h}(s)dW_s|-\sqrt{2\log(\nu/h)})$ has the same distribution as $w_{K^{-1}}\max(|\xi_1|,\ldots,|\xi_{K}|)-w_{K^{-1}}\sqrt{2\log(\nu K)}$. It is easy to show that
\begin{align*}
  \sqrt{\log \nu K}=\sqrt{\log K}+\frac{\log \nu}{2\sqrt{\log K}}
  +O\Big(\frac 1{\log^{3/2} K}\Big)
\end{align*}
and
\begin{align*}
  \Big|\frac1{w_{K^{-1}}}-\frac {\log\log K}{\sqrt{\tfrac 12\log K}}\Big|
  =O\left(\frac{\log\log K}{ \log^{3/2} K}\right).
\end{align*}
Assume that $\eta_n \rightarrow 0$ and $\eta_n \log\log K\rightarrow \infty$. Then for sufficiently large $n$,
\begin{align*}
  & \P\big(T_n^\circ>-\tfrac 14+\eta_n\big) \\
  & =
  \P\Big(\max(|\xi_1|,\ldots,|\xi_{K}|)>\big(-\tfrac 14+\eta_n\big)/w_{K^{-1}}
  	+\sqrt{2\log \nu K}\Big) \\
  &  =
  \P\Big(\max(|\xi_1|,\ldots,|\xi_{K}|)> \\
  & \qquad \big(-1+4\eta_n\big)
  \frac{\log\log K}{\sqrt{8\log K}}
  +\sqrt{2\log K}+\frac{\log \nu}{\sqrt{2\log K}}
  +O\big(\frac{\log\log K}{\log^{3/2} K}\big)\Big) \\
  &\leq
  \P\Big(\max(|\xi_1|,\ldots,|\xi_{K}|)>a_K+b_K2\eta_n\log\log K\Big)\rightarrow 0 .
\end{align*}
Similarly,
\begin{align*}
  &\P\big(T_n^\circ\leq -\tfrac 14-\eta_n\big)
  \leq \P\Big(\max(|\xi_1|,\ldots,|\xi_{K}|)\leq a_K-b_K \eta_n\log\log K\Big)\rightarrow 0.
\end{align*}
\end{proof}


In order to illustrate the general multiscale statistic discussed in Section \ref{sec.genmsc}, let us show in the subsequent example that it is also possible to choose $B_n$ in order to construct (level-dependent) values for simultaneous wavelet thresholding.

\begin{exam}
\label{exam.wav_thresh}
Observe that $\widehat d_{j,k}=T_{k2^{-j},2^{-j}}$ and  $d_{j,k}=\E T_{k2^{-j},2^{-j}}=\int \psi_{k2^{-j},2^{-j}}(s)g(s)ds=\int \psi(2^js-k)g(s)ds$ are the (estimated) wavelet coefficients and if $j_{0n}$ and $j_{1n}$ are integers satisfying $2^{-j_{1n}}n\log^{-3}n\rightarrow \infty$ and $j_{0n}\rightarrow \infty$, then for $\alpha\in (0,1)$ and 
\begin{align*}
 B_n=\big\{(k2^{-j},2^{-j}) \big|  \ k=0,1,\ldots,2^j-1, \ j_{0n}\leq j\leq j_{1n}, \ j \in \mathbb{N} \ \big\},
\end{align*}
Theorem \ref{thm.gmsc} yields in a natural way level-dependent thresholds $q_{j,k}(\alpha)$, such that
\begin{align*}
  \lim_{n\rightarrow \infty}\P\Big(\big|\widehat d_{j,k} -d_{j,k} \big|\leq q_{j,k}(\alpha),
  \ \text{for all} \ j,k, \ \text{with} \ (k2^{-j}, 2^{-j})\in B_n \Big)= 1-\alpha.
\end{align*}
\end{exam}

\end{appendices}

\bibliographystyle{plain}       
\bibliography{refsPart1}           

\begin{thebibliography}{10}

\bibitem{abr}
M.~Abramowitz and I.~Stegun.
\newblock {\em Handbook of Mathematical Functions with Formulas, Graphs, and
  Mathematical Tables}.
\newblock Courier Dover Publications, 1964.

\bibitem{ali}
S.~Alinhac and P.~{G\'erard}.
\newblock {\em Pseudo-differential Operators and the Nash-Moser Theorem}.
\newblock American Mathematical Society, 2007.

\bibitem{bal}
F.~Balabdaoui, K.~Bissantz, N.~Bissantz, and H.~Holzmann.
\newblock {Demonstrating single- and multiple currents through the E.
  coli-SecYEG-pore: Testing for the number of modes of noisy observations}.
\newblock {\em J. Amer. Statist. Assoc.}, 105:136--146, 2010.

\bibitem{BIS09}
N.~Bissantz, G.~Claeskens, H.~Holzmann, and A.~Munk.
\newblock Testing for lack of fit in inverse regression -- with applications to
  biophotonic imaging.
\newblock {\em J. Royal Statist. Society Ser. B}, 71:25--48, 2009.

\bibitem{bis}
N.~Bissantz, L.~{D\"umbgen}, H.~Holzmann, and A.~Munk.
\newblock Nonparametric confidence bands in deconvolution density estimation.
\newblock {\em J. Royal Statist. Society Ser. B}, 69:483--506, 2007.

\bibitem{bishol}
N.~Bissantz and H.~Holzmann.
\newblock Statistical inference for inverse problems.
\newblock {\em Inverse Problems}, 24:034009, 2008.

\bibitem{BT07a}
C.~Butucea and A.~Tsybakov.
\newblock Sharp optimality for density deconvolution with dominating bias, {I}.
\newblock {\em Theory Probab. Appl.}, 52:111--128, 2007.

\bibitem{cha}
P.~Chaudhuri and J.S. Marron.
\newblock Scale space view of curve estimation.
\newblock {\em Ann. Statist.}, 28:408--428, 2000.

\bibitem{dav}
L.~Davis, A.~Kovac, and M.~Meise.
\newblock {Nonparametric regression, confidence regions and regularization}.
\newblock {\em Ann. Statist.}, 37:2597--2625, 2009.

\bibitem{ang}
D.~De~Angelis, W.~R. Gilks, and N.~E. Day.
\newblock {Bayesian projection of the acquired immune deficiency syndrome
  epidemic}.
\newblock {\em J. Royal Statist. Society, Ser. C}, 47:449--498, 1998.

\bibitem{DG04}
A.~Delaigle and I.~Gijbels.
\newblock {Practical bandwidth selection in deconvolution kernel density
  estimation.}
\newblock {\em Comput. Stat. Data Anal.}, 45:249--267, 2004.

\bibitem{dig}
P.J. Diggle and P.~Hall.
\newblock {A Fourier approach to nonparametric deconvolution of a density
  estimate}.
\newblock {\em J. Royal Statist. Society, Ser. B}, 55:523--531, 1993.

\bibitem{due2}
L.~{D\"umbgen} and V.~G. Spokoiny.
\newblock {Multiscale testing of qualitative hypothesis}.
\newblock {\em Ann. Statist.}, 29:124--152, 2001.

\bibitem{due1}
L.~{D\"umbgen} and G.~Walther.
\newblock {Multiscale inference about a density}.
\newblock {\em Ann. Statist.}, 26:1758--1785, 2008.

\bibitem{emb}
P.~Embrechts, C.~Kl{\"u}ppelberg, and T.~Mikosch.
\newblock {\em Modelling Extremal Events}.
\newblock Springer, Berlin, 1997.

\bibitem{Fan91b}
J.~Fan.
\newblock {Asymptotic normality for deconvolution kernel density estimators}.
\newblock {\em Sankhya}, 53:97--110, 1991.

\bibitem{Fan91}
J.~Fan.
\newblock On the optimal rates of convergence for nonparametric deconvolution
  problem.
\newblock {\em Ann. Statist.}, 19:1257--1272, 1991.

\bibitem{flo}
C.~E. Floyd, R.~J. Jaszczak, K.~L. Greer, and Coleman~R. E.
\newblock {Deconvolution of compton scatter in SPECT}.
\newblock {\em J. Nucl. Med.}, 26:403--408, 1985.

\bibitem{gin2}
E.~{Gin\'e}, V.~Koltchinskii, and L.~Sakhanenko.
\newblock {Kernel density estimators: convergence in distribution for weighted
  sup-norms}.
\newblock {\em Probab. Theory Related Fields}, 130:167--198, 2004.

\bibitem{gin3}
E.~{Gin\'e}, V.~Koltchinskii, and J.~Zinn.
\newblock {Weighted uniform consistency of kernel density estimators}.
\newblock {\em Ann. Probab.}, 32:2570--2605, 2004.

\bibitem{gin}
E.~{Gin\'e} and R.~Nickl.
\newblock Confidence bands in density estimation.
\newblock {\em Ann. Statist.}, 38:1122--1170, 2010.

\bibitem{gol2}
G.~Golubev and B.~Levit.
\newblock {Asymptotically efficient estimation in the Wicksell problem}.
\newblock {\em Ann. Statist.}, 26:2407--2419, 1998.

\bibitem{gro}
P.~Groeneboom and G.~Jongbloed.
\newblock {Isotonic estimation and rates of convergence in Wiksell's problem}.
\newblock {\em Ann. Statist.}, 23:1518--1542, 1995.

\bibitem{has}
R.~Z. Hasminskii.
\newblock {Lower bounds for the risk of nonparametric estimates of the mode}.
\newblock In J.~Jurechkova, editor, {\em Contributions to Statistics, Jaroslav
  Hajek Memorial Volume}. Academia Prague, 2004, 91--97.

\bibitem{hol}
H.~Holzmann, N.~Bissantz, and A.~Munk.
\newblock {Density testing in a contaminated sample}.
\newblock {\em J. Multivariate Analysis}, 98:57--75, 2007.

\bibitem{hoer}
L.~{H\"ormander}.
\newblock {\em The Analysis of Linear Partial Differential Operators III:
  Pseudo-Differential Operators}.
\newblock Springer, Berlin, 2007.

\bibitem{hwa}
I.~L. Hwang.
\newblock {The $L^2$-boundedness of pseudodifferential operators}.
\newblock {\em Transactions of the American Mathematical Society}, 302:55--76,
  1987.

\bibitem{ing}
Y.~Ingster, T.~Sapatinas, and I.~Suslina.
\newblock {Minimax nonparametric testing in a problem related to the Radon
  transform}.
\newblock {\em Math. Methods Statist.}, 20:347--364, 2011.

\bibitem{joh2004}
I.~Johnstone, G.~Kerkyacharian, D.~Picard, and M.~Raimondo.
\newblock {Wavelet deconvolution in a periodic setting}.
\newblock {\em J. Royal Statist. Society, Ser. B}, 66:547--573, 2004.

\bibitem{kac}
K.~Kacperski, K.~Erlandsson, S.~Ben-Haim, and B.~Hutton.
\newblock {Iterative deconvolution of simultaneous $^{99m}$ Tc and $^{201}$ Tl
  projection data measured on a CdZnTe-based cardiac SPECT scanner}.
\newblock {\em Phys. Med. Biol.}, 56:1397--1414, 2011.

\bibitem{kil}
A.~Kilbas, H.~Srivastava, and J.~Trujillo.
\newblock {\em Theory and Applications of Fractional Differential Equations}.
\newblock Elsevier, Amsterdam, 2006.

\bibitem{kol}
V.~Koltchinskii.
\newblock {Komlos-Major-Tusnady approximation for the general empirical process
  and Haar expansions of classes of functions}.
\newblock {\em J. Theoret. Probab.}, 7:73--118, 1994.

\bibitem{KMT}
J.~Koml\'os, P.~Major, and G.~Tusn\'ady.
\newblock {An approximation of partial sums of independent rv's and the sample
  df.}
\newblock {\em I. Z. Wahrsch. verw. Gebiete}, 32:111--131, 1975.

\bibitem{lau}
B.~Laurent, J.M. Loubes, and C.~Marteau.
\newblock Testing inverse problems: a direct or indirect problem?
\newblock {\em J. Statist. Plann. Inference}, 141:1849--1861, 2011.

\bibitem{lau2}
B.~Laurent, J.M. Loubes, and C.~Marteau.
\newblock Non asymptotic minimax rates of testing in signal detection with
  heterogeneous variances.
\newblock {\em Electron. J. Statist.}, 6:91--122, 2012.

\bibitem{lead}
M.~Leadbetter, G.~Lindgren, and H.~Rootzen.
\newblock {\em Extremes and related properties of random sequences and
  processes}.
\newblock Springer, Berlin, 1983.

\bibitem{lou}
K.~Lounici and R.~Nickl.
\newblock {Global uniform risk bounds for wavelet deconvolution estimators}.
\newblock {\em Ann. Statist.}, 39:201--231, 2011.

\bibitem{mei2}
A.~Meister.
\newblock {\em Deconvolution Problems in Nonparametric Statistics}.
\newblock Springer, Berlin, 2009.

\bibitem{mei}
A.~Meister.
\newblock {On testing for local monotonicity in deconvolution problems}.
\newblock {\em Statist. Probab. Lett.}, 79:312--319, 2009.

\bibitem{nic}
R.~Nickl and M.~Rei\ss.
\newblock {A Donsker theorem for L\'evy measures}.
\newblock 2012.
\newblock arxiv:1201.0590v2, Math ar{X}iv Preprint.

\bibitem{pen}
M.~Pensky and B.~Vidakovic.
\newblock {Adaptive wavelet estimator for nonparametric density deconvolution}.
\newblock {\em Ann. Statist.}, 27:2033--2053, 1999.

\bibitem{rac}
M.~Rachdi and R.~Sabre.
\newblock {Consistent estimates of the mode of the probability density function
  in nonparametric deconvolution problems}.
\newblock {\em Statist. Probab. Lett.}, 47:297--307, 2000.

\bibitem{soe}
{S\"ohl, J. and Trabs, M.}
\newblock A uniform central limit theorem and efficiency for deconvolution
  estimators.
\newblock 2012.
\newblock arxiv:1208.0687v1, Math ar{X}iv Preprint.

\bibitem{wal}
G.~Walther.
\newblock {Bikernel mixture analysis}.
\newblock In J.C. Misra, editor, {\em Industrial Mathematics and Statistics},
  pages 586--604. Narosa, 2003.

\bibitem{wie}
B.~Wieczorek.
\newblock {On optimal estimation of the mode in nonparametric deconvolution
  problems}.
\newblock {\em J. Nonparametr. Stat.}, 22:65--80, 2010.

\bibitem{wlo}
J.~Wloka, B.~Rowley, and B.~Lawruk.
\newblock {\em Boundary Value Problems for Elliptic Systems}.
\newblock Cambridge University Press, 1995.

\end{thebibliography}


\end{document}